\newtheorem{theorem}{Theorem}
\newtheorem{lemma}[theorem]{Lemma}
\newtheorem{proposition}[theorem]{Proposition}
\newtheorem{corollary}[theorem]{Corollary}
\theoremstyle{definition}
\newtheorem{definition}{Definition}
\newtheorem{remark}{Remark}
\newtheorem{assumption}{Assumption}
\newtheorem*{UnnumberedLemma}{Lemma}
\newtheorem*{unumberedtheorem}{Theorem}
\newcommand{\N}{\mathbf{N}}
\newcommand{\R}{\mathbf{R}}
\newcommand{\C}{\mathbf{C}}
\newcommand{\Hc}{\mathcal{H}}
\newcommand{\X}{\mathcal{X}}
\newcommand{\Y}{\mathcal{Y}}
\newcommand{\Bc}{L}
\newcommand{\Dc}{\mathcal{D}}
\newcommand{\nuovo}[1]{#1}
\numberwithin{equation}{section}
\def \p{\partial}
\def \rmi{{\rm i}}
\newcommand{\pro}{\Upsilon}
\newcommand{\radon}[1]{\mathcal{R}(#1)}
\newcommand{\TV}[3]{\mathrm{TV}(#1,(#2,#3))}
\newcommand{\Bor}{\mathcal{L}^\infty}
\providecommand{\keywords}[1]{\textbf{\textit{Index terms---}} #1}
\title{Regular propagators of bilinear quantum systems}
\author{Nabile Boussa\"id \\
 Laboratoire de Math\'ematiques de Besan\c{c}on, UMR 6623 \\
 Univ. Bourgogne Franche-Comt\'e, Besan\c{c}on, France\\
 \texttt{Nabile.Boussaid@univ-fcomte.fr}\\
~\\
Marco Caponigro\\
  \'Equipe M2N\\
 Conservatoire National des Arts et M\'etiers, Paris, France\\
\texttt{Marco.Caponigro@cnam.fr}\\
~\\
Thomas Chambrion\\
Universit\'e de Lorraine, IECL, UMR 7502, Vandoeuvre-l\`es-Nancy, F-54506, 
France\\
CNRS, IECL, UMR 7502, Vandoeuvre-l\`es-Nancy, F-54506, France\\
Inria, SPHINX, Villers-l\`es-Nancy, F-54600, France\\
 \texttt{Thomas.Chambrion@univ-lorraine.fr}
}
\begin{document}
\maketitle

\begin{abstract}
The present analysis deals with the regularity of solutions of bilinear control 
systems of the type $x'=(A+u(t)B)x$
where the state $x$ belongs to some complex infinite dimensional 
Hilbert space, the (possibly unbounded) linear operators $A$ and 
$B$ are skew-adjoint and the control $u$ is a real valued function. 
Such systems arise, for instance, in 
quantum control with the bilinear Schr\"{o}dinger equation. For the sake of the regularity analysis, we consider a more general framework where 
$A$ and $B$ are generators of contraction semigroups.

Under some hypotheses on the commutator of the operators $A$ and $B$, it is possible to 
extend the definition of solution for controls in the set of Radon measures to obtain precise \emph{a priori} 
energy estimates on the solutions, leading to a natural extension of the celebrated 
noncontrollability result of Ball, Marsden, and Slemrod in 1982. 
\end{abstract}

\keywords{Quantum Control; Bilinear Schr\"odinger equation}
\newpage 

\tableofcontents

\section{Introduction}

A bilinear control 
 system in  a Banach space $\X$ is given by an evolution equation
\begin{equation}\label{EQ_bilinear_abstrait}
\frac{d}{dt}x(t)=(A+u(t)B)x(t)
\end{equation} 
 where  $A$ and $B$ are two (possibly unbounded) 
  linear operators on $\X$ and $u$ is a real-valued function, the 
 control. Well-posedness of bilinear evolution equations of 
 type~\eqref{EQ_bilinear_abstrait} for a given 
 control $u$  is usually a difficult question.   If $K$ is a subset of $\R$, we define 
 $PC(K)$ the set of right-continuous 
  piecewise constant functions  taking values in $K$.

 If $K$, $A$ and $B$ are such that
 for every $u$ in $K$,
  $A+uB$ generates a $C^0$ semigroup $t\mapsto e^{t(A+uB)}$, then  for every $T\geq 0$ and every $u$ in $PC(K)$, the restriction of $u$ on $[0,T)$ writes  
 \begin{equation}\label{eq:forme_controles_PC}
u=\sum_{j=1}^p u_j \mathbb{I}_{[\tau_j,\tau_{j+1})}
  \end{equation}
with $p \in \N$, $u_1,\ldots,u_p \in K$ and 
  $\tau_1<\tau_2<\ldots <\tau_{p+1}=T$, and   
   one defines the associated propagator of 
  (\ref{EQ_bilinear_abstrait}) by 
  $$\Upsilon^u_{t,\tau_1}=e^{(t-\tau_j)(A+u_jB)}\circ e^{(\tau_j-\tau_{j-1})
  (A+u_{j-1}B)}\circ \cdots \circ 
   e^{(\tau_2-\tau_1)  (A+u_1B)},
   $$ for every $t$ in $(\tau_j,\tau_{j+1})$. The solution 
   of~(\ref{EQ_bilinear_abstrait}) with 
  initial value $x_0$ at time $\tau_1$ is $t\mapsto \Upsilon^u_{t,\tau_1}\psi_0$. When 
  $\tau_1=0$, we  denote $\Upsilon^u_t:=\Upsilon^u_{t,0}$. 
  
It is of particular interest in the applications to study the set of points that can be attained in finite time from a given initial datum $\psi_0$ using a set of admissible controls in ${\mathcal Z}$
\[
{\mathcal Att}_{\mathcal{Z}}(\psi_0)= \cup_{t \geq 0} \{\Upsilon^u_t \psi_0 | u \in \mathcal{Z} \}
\]
where $\mathcal{Z}$ is a subset of $PC(K)$ or, possibly, a larger set (provided that a suitable extension of $\Upsilon$ to $\mathcal{Z}$ makes sense). The set ${\mathcal Att}_{\mathcal{Z}}(\psi_0)$ is called attainable set from $\psi_0$ with controls ${\mathcal Z}$.
  

Providing a precise description of the propagator is, in principle, a hard task and, in turns, so is studying the controllability of~\eqref{EQ_bilinear_abstrait}. On the other hand, one could
use the regularity of the solutions of~\eqref{EQ_bilinear_abstrait} to provide upper bounds of the attainable sets of the bilinear system in order to determine obstructions to controllability. The present analysis focuses on this second approach.

\subsection{Elementary obstructions to controllability in a Banach space}
There are several upper bounds on the attainable sets that can be deduced from natural properties of the system. 
We list below some of them.

\subsubsection{Conservation of the norm}
In the Hilbertian case, in which $\X$ is a Hilbert space, the propagator $t\mapsto \Upsilon^u_t$ is unitary as soon as $A+uB$ is essentially skew-adjoint for every $u$ in $K$. If $PC(K)$ is endowed with a topology for which $u\mapsto \Upsilon^u_T \psi_0$ is continuous for every $T>0$ and every $\psi_0$ in $\X$, then the continuous extension of the mapping $u\mapsto \Upsilon^u_T \psi_0$ takes value in the sphere of radius $\|\psi_0\|$.

\subsubsection{Continuity of the propagators }\label{SEC_principe_obstruction}

In the general case in which $\X$ is a Banach space, let $\mathcal Z$ be a topological space, \nuovo{containing $PC(K)$, endowed with a topology such that $PC(K)$ is dense in $\mathcal Z$ and
 $u\in {\mathcal Z}\mapsto \Upsilon^u_T \psi_0\in \X$ is continuous for every $T>0$ and every $\psi_0$ in $\X$.} Assume, moreover, that 
$u\mapsto \Upsilon^u_T \psi_0$ admits a (necessarily unique) continuous extension to $\mathcal Z$.
If ${\mathcal Z}_0 \subset {\mathcal Z}$, endowed with a topology finer than the one induced by ${\mathcal Z}$,  is sequentially compact (for its own topology), then for every $\psi_0$ in $\X$, for every $T>0$, the attainable set at time $T$ from $\psi_0$ with controls in ${\mathcal Z}_0$,
$
\{\Upsilon^u_T \psi_0 | u \in \mathcal{Z}_0 \} 
$ 
is compact.

If  $(\mathcal{Z}_i)_{i\in \N}$  is a countable covering of $\mathcal{Z}$, $\mathcal{Z}=\cup_{i \in \N} \mathcal{Z}_i$, $\mathcal{Z}_i$ is sequentially compact for every $i$, and
the topology of $\mathcal{Z}_i$  is finer than the topology induced by $\mathcal{Z}$,  then the attainable set at time $T$ from $\psi_0$ with controls in ${\mathcal Z}$,
$
\{\Upsilon^u_T \psi_0 | u \in \mathcal{Z} \} =\cup_{i\in \N} \{\Upsilon^u_T \psi_0 | u \in \mathcal{Z}_i \}  
$ 
is a countable union of compact sets in $\X$ (hence is a meager set in the sense of Baire as soon as $\X$ is infinite dimensional).

Notice that if the input-output mapping $PC(K) \ni u \mapsto  \Upsilon^u \psi_0 \in C^0([0,T],\X)$ is continuous, then the above results can be generalized to show that the attainable set from $\psi_0$ at time less than $T$ 
$
\cup_{0\leq t \leq T} \{\Upsilon^u_t \psi_0 | u \in \mathcal{Z} \} =\cup_{i\in \N} 
\cup_{0\leq t \leq T}
\{\Upsilon^u_T \psi_0 | u \in \mathcal{Z}_i \}  
$ 
is an union of compact sets.

This is the underlying idea of the proof
 of the following  result by Ball, Marsden, and Slemrod.
\begin{unumberedtheorem}[Theorem 3.6 in \cite{BMS}]
Let $\X$ be  an infinite dimensional Banach space, $A$ generate a $C^0$ semigroup of 
bounded linear 
operators on $\X$, and $B$ be a bounded linear operator on $\X$. Then for any $T\geq 0$,  the 
input-output mapping $u\mapsto 
\Upsilon^u_{T}$ admits a unique continuous extension to $L^{1}([0,T],\R)$
and the attainable set
\begin{equation}\label{EQ_attainable_set}
\bigcup_{r > 1} \bigcup_{T\geq 0} \bigcup_{u \in L^r([0,T],\R)} \{ \Upsilon^u_{t}\psi_0 \mid t\in [0,T]\}
\end{equation}
is contained in a countable union of compact subsets of $\X$, and, in particular, has dense complement. 
\end{unumberedtheorem} 
In this case, for any $T\geq0$, $\mathcal{Z}=\cup_{r>1} L^r([0,T],\R)$, \nuovo{ $\mathcal {Z}=\cup_{i,j} \mathcal{Z}_{ij}$ with 
  \[ 
  \mathcal{Z}_{i,j}=\{f \in L^{1+\frac{1}{j}}([0,T],\R) \mid \|f\|_{L^{1+\frac{1}{j}}([0,T])} \leq i\},
  \] 
  and $L^{1+\frac{1}{j}}([0,T],\R)$ is endowed with the weak $L^1$-topology.} The sequential-compactness of $\mathcal{Z}_{i,j}$ is granted by Banach--Alaoglu--Bourbaki Theorem. The main difficulty of Theorem 3.6 in \cite{BMS} is to prove that, for any $\psi_0$ in $\X$,    the weak convergence of $(u_n)$ to $u$ in $L^1$ implies strong convergence of the associated sequence of solutions
of \eqref{EQ_bilinear_abstrait} $(t\mapsto \Upsilon^{u_n}_t \psi_0)$ to $t\mapsto \Upsilon^{u}_t \psi_0$.   

 \begin{remark}\label{Remark:OpenQuestionBMS}
 The above argument does not hold anymore if one considers controls in $L^1$, since $L^1$ is not a reflexive
 space.   This is the content of \cite[Remark 3.8]{BMS}, where the question of  possible extensions of
 the above result to $r=1$ is  left open except in the so-called (see~\cite{Slemrod}) diagonal case, see \cite[Theorem 5.5]{BMS}. 
  \end{remark}

Another example of the same obstruction is given below  in Corollary \ref{COR_obstruction_contr_exacte_BV} with $\mathcal Z$ equal to the set of functions with bounded variations. In this case, the sequential compactness in $\mathcal Z$ is given by Helly's selection theorem.

\subsubsection{Invariance of the domain}\label{Sec:InvarianceDomain}

In the case in which $A$ and $B$ are bounded operators on $\X$, if $\mathcal F$ is a closed subspace of $\X$ left invariant by $A+uB$ for every $u$ in $K$, then for every $u$, the $C^0$ semigroup generated by $A+uB$ leaves  $\mathcal F$ invariant. Thus, for every $u$ in $PC(K)$ and every $t\geq 0$, $\Upsilon^u_t$ leaves $\mathcal F$ invariant. 
If, moreover, the dynamics is time-reversible, then for every $\psi_0$ in $\X$, for every $u$ in $PC(K)$, for every $t>0$, 
$\Upsilon^u_t \psi_0 \in \mathcal{F}$ if and only if $\psi_0 \in\mathcal{F}$ . 

Even in the unbounded case, the same conclusion holds if $\mathcal F$ a subspace of $\X$ left invariant by the dynamics $\Upsilon^u_t$ and its  time-reverse dynamics (when it makes sense).

We will see in  Section \ref{SEC_Higher_regularity} below that these invariance properties remain true in the Hilbert case when $\mathcal F$ is the domain of a power of $A$ left invariant by $B$.

\subsection{Attainable sets in quantum control}

The main motivation for the present analysis comes from the problem of controllability for closed quantum systems. The state of a quantum system evolving on a finite dimensional Riemannian
manifold $\Omega$, with associated measure $\mu$, is described by its \emph{wave
function},
represented as
a point in the unit sphere of $L^2(\Omega, \mathbf{C})$. 
In absence of interactions with the environment and neglecting the
relativistic effects, the time evolution of the
wave function is given by the Schr\"odinger equation
\begin{equation*}
\rmi \frac{\partial \psi}{\partial t}=-\frac{1}{2}\Delta \psi +V(x) \psi(x,t),
\end{equation*}
where $\Delta$ is the Laplace-Beltrami operator on $\Omega$
and
$V:\Omega\rightarrow \mathbf{R}$ is a real function (usually called potential)
accounting for the physical properties of the system.  When submitted to an
excitation by an external field (e.g. a laser), the Schr\"odinger
equation reads
\begin{equation}\label{eq:blse}
\rmi \frac{\partial \psi}{\partial t}=-\frac{1}{2}\Delta \psi +V(x) \psi(x,t)
+u(t)
W(x) \psi(x,t),
\end{equation}
where $W:\Omega\rightarrow \mathbf{R}$ is a real function accounting for the
physical properties of the external field and $u$ is a real function of  time
accounting for the intensity of it.

In the last decades, many efforts have been made to describe the attainable set 
of \eqref{eq:blse}. In~\cite{turinici}, Turinici adapted the result by Ball, Marsden and 
Slemrod~\cite[Theorem~3.6]{BMS} to \eqref{eq:blse} with a measurable bounded 
$W$. \nuovo{The first known positive result  has been obtained by Beauchard 
in~\cite{beauchard}, and improved in \cite{camillo, BeauchardMorancey}, for 
$\Omega=(0,\pi)$  with Dirichlet boundary conditions, $V=0$ and 
$W:x\mapsto x^2$, see Section~\ref{Sec:PotentialWell} for more details. }In the 
case of the quantum harmonic oscillator: $\Omega=\R$, $V(x)=x^2$ and 
$W:x\mapsto x$, the attainable set is finite dimensional due to symmetries of 
the system, see Rouchon and Mirrahimi in~\cite{Rouchon} and 
Section~\ref{SEC_harmonic_oscillator}.

\nuovo{At present, very few  results providing a precise description of the attainable sets of  \eqref{eq:blse}  in the spirit of \cite{beauchard} have been obtained, and only with strong restrictions on the dimension and the boundedness of the domain $\Omega$.
} Instead of lower bounds of the attainable sets, many works have considered 
lower bounds of its closure (in different natural norms) which is sufficient 
from a physical point of view. We can, for instance, 
cite the work by Nersesyan~\cite{Nersy}, in Sobolev spaces by means of Lyapunov 
technics for bounded domains and potentials. Concerning unbounded domains but 
with bounded potentials, we can cite~\cite{mirrahimi-continuous} with Lyapunov 
technics as well.  
Geometric methods have been used to prove the density of the attainable set in 
$L^2$-norm when the spectrum is purely discrete and nonresonance conditions  are 
satisfied, see~\cite{Schrod, genericity-mario-paolo, Schrod2, periodic, BCS14, 
Esatta}. \nuovo{The common strategy used in the above mentioned approximate controllability results is to fix an initial condition and to design  a sequence of controls such that the associated trajectories converge, in some appropriate sense, to the target. 
If bounds  were known, both on the controllability time and on some $L^p$ norms, for these sequences of control, one could extract a convergent subsequence of controls and use the continuity of the endpoint mapping to infer exact controllability.  
}
The present work, similarly to \cite{Nersy}, considers the question of 
the regularity of a solution of \eqref{EQ_bilinear_abstrait} but in a more 
general way, following the spirit of \cite{BoussaidCaponigroChambrion}.

\nuovo{
\subsection{Impulsive control}
As mentioned in Remark \ref{Remark:OpenQuestionBMS}, a major difficulty in 
extending the result of obstruction of Ball-Marsden-Slemrod is the lack of reflectiveness of 
$L^1$ and the fact that a bounded sequence in $L^1$ does not necessarily admits 
a weakly-convergent subsequence. On the other hand, the set of Radon measures, when endowed with the total variation topology (see Appendix~\ref{sec:notations}), possesses weak sequential compactness properties.  As a consequence, throughout this work we will deduce properties for integrable controls from the equivalent statement set in the framework of Radon measures controls.

The idea to consider measures (instead of functions) as controls has given rise to a large literature.  Let us cite, among many others contributions,    \cite{4045697}, \cite{MR0484623}, \cite{MR1108058}, \cite{MR963323}, \cite{10.1007/978-1-4613-8489-2_1},  \cite{MR2350058} , \cite{MR3147261}. For an introduction to the subject we refer to the book \cite{MR2024011}. Nowadays, the question of the definition of solution for 
dynamics such as \eqref{EQ_bilinear_abstrait} in finite dimensional spaces is 
essentially well-understood. However in an infinite  dimensional framework several questions are still open. Our construction  of generalized propagators for Radon 
measures (Section~\ref{sec:radon} below) can be compared with the strategy used by Miller and Rubinovich in Section 4.2.1 of \cite{MR2024011} for finite-dimensional systems. 
}

\subsection{Main results}\label{SEC_main_results}

\subsubsection{Upper bound for attainable sets of bilinear control systems}
Our aim is to give upper bounds for the attainable set of a bilinear control system. 
The main result is the following.
\begin{theorem}\label{THE_main_introduction}
Let $\Hc$ be an infinite dimensional Hilbert space, $A$ be a maximal dissipative operator on $\Hc$ with domain $D(A)$, and $B$ be an 
operator on $\Hc$ such that $B-c$ and $-B-c'$ generate contraction semigroups leaving $D(A)$ invariant for some real constants $c \geq 0$ and $c'\geq 0$. 
Assume that $A+uB$ is maximal dissipative with domain $D(A)$ for every $u$ in $\R$ and that
the map $t \in \mathbf{R} \mapsto e^{tB} A e^{-tB} \in L(D(A),\Hc)$, \nuovo{where $D(A)$ is endowed with the graph norm}, is locally Lipschitz. 
Then, for every $T>0$ and \nuovo{for every 
$\psi_0$ in $\Hc$, there 
exists a unique continuous extension to $L^1([0,T],\R)$ of the enpoint mapping $u\mapsto \Upsilon^u_{T}\psi_0 \in  \Hc$ of~\eqref{EQ_bilinear_abstrait}, moreover} the set
$$
{\mathcal Att}_{L^1}(\psi_0):=\bigcup_{T\geq 0} \bigcup_{u \in L^1([0,T],\R)} \{\Upsilon^u_{t,0}\psi_0 \mid t\in [0,T]\}
$$
is contained in a countable union of compact subsets of $\Hc$.
\end{theorem}
\begin{proof}
See Section \ref{SEC_attainable_set_interaction}.
\end{proof}

\nuovo{When $A$ and $B$ are skew-adjoint, the orbits lie in the sphere of $\Hc$ of radius $\|\psi_0\|$, which is, of course, a  meager set in $\Hc$.}
As a consequence of Theorem~\ref{THE_main_introduction}, 
$$
\bigcup_{\alpha \geq 0} \bigcup_{T\geq 0} \bigcup_{u \in L^1([0,T],\R)} \{\alpha \Upsilon^u_{t,0}\psi_0 \mid t\in [0,T]\}
$$
is a meager set in $\Hc$ and hence it has dense complement. \nuovo{Nonetheless, in the skew-adjoint case,
the attainable set is a meager set (and hence has dense complement) in the sphere of radius $\|\psi_0\|$.}

In the special case where the control operator $B$ is bounded, using a different construction, we obtain the simplified 
statement below similar to the one of~\cite{BMS}, but dealing with $L^1$ controls.

\begin{proposition}\label{PRO_introduction_BMS_Bborne}
Let $\X$ be  an infinite dimensional  Banach space, $A$ generate a $C^0$ semigroup of bounded linear 
operators on $\X$, and $B$ be a bounded linear operator on $\X$. Then for every $T>0$, there 
exists a unique continuous extension to $L^1([0,T],\R)$ of the input-output mapping 
$u\mapsto \Upsilon^u_{T} \in  L(\Hc,\Hc)$
of 
\eqref{EQ_bilinear_abstrait}
 and, for every $\psi_0$ in $\Hc$, 
$$ 
{\mathcal Att}_{L^1}(\psi_0):=\bigcup_{T\geq 0} \bigcup_{u \in L^1([0,T],\R)} \{ \Upsilon^u_{t}\psi_0 \mid t\in [0,T]\}
$$
is contained in a countable union of compact subsets of $\X$ and, in particular, has dense complement. 
\end{proposition}
\begin{proof}
See Section~\ref{Sec:BoundedPotential}.
\end{proof}

These results set the open question by Ball, Marsden, and Slemrod in~\cite[Remark 3.8]{BMS}.
The scheme of the proofs of Theorem \ref{THE_main_introduction} and Proposition 
\ref{PRO_introduction_BMS_Bborne} follows the structure of the proof of \cite[Theorem 3.6]{BMS}.
The lack of reflectiveness of  $L^1$ leads us to consider Radon measures as controls, the weak-compactness of bounded sequences is ensured by Helly's Selection 
Theorem. The  main difficulty 
is  to define a continuous input-output mapping associated with 
(\ref{EQ_bilinear_abstrait}) in such 
a way to guarantee compactness properties for the attainable sets. 

\begin{remark}
  Theorem~\ref{THE_main_introduction} still holds true for Radon measures controls, as stated in Corollary~\ref{Cor:NoExactControllability} below.
Here the result is presented in term of $L^1$ controls for the sake of readability, indeed the definition of propagator associated with a Radon measure requires  preliminary notions presented in Section~\ref{sec:radon}. 
The hypotheses of Theorem~\ref{THE_main_introduction} are needed in order to prove  continuity of the propagators after a particular change of variable (the \emph{interaction framework} presented in Section~\ref{Sec:Interaction}). The key result in the proof of the continuity is an adaptation of a classical result by Kato~\cite{Kato1953} (see Proposition~\ref{prop:continuity}). 
\end{remark}

\subsubsection{Higher regularity}

The Lipschitz assumption 
on the map $t \in \mathbf{R} \mapsto e^{tB} A e^{-tB} \in L(D(A),\Hc)$
in Theorem~\ref{THE_main_introduction} is crucial for our analysis when $B$ is 
unbounded, however it may be hard to check in practice. For bilinear systems encountered in quantum physics, one can take advantage of the skew-adjointness of the operators to simplify the analysis. For instance, 
we have the following result. 

 \begin{theorem}\label{THE_intro_weak_coupling}
Let $\Hc$ be an infinite dimensional Hilbert space, $k$ a positive 
number, $A$ and $B$ be two skew-adjoint operators such that:
\begin{itemize}
 \item[$(i)$] $A$ is invertible with bounded 
inverse from $D(A)$ to $\Hc$,
 \item[$(ii)$]  for any  $t \in \R$, $e^{tB}D(|A|^{k/2})\subset D(|A|^{k/2})$,
 \item[$(iii)$] there exists $c\geq 0$ and $c'\geq 0$ such that $B-c$ 
and $-B-c'$ generate contraction semigroups on $D(|A|^{k/2})$ for the norm $\|\cdot\|_{k/2}$,
\item[$(iv)$]  $B$ is $A$-bounded with $\|B\|_A=0$ (see \eqref{def:Abound} below for the precise definition).
 \end{itemize}
Then, for every $T>0$, there exists a unique strongly continuous 
extension to $BV([0,T],\R)$, endowed with the $\|\cdot \|_{L^1} + \TV{\cdot}{[0,T]}{\R}$-norm, of the end-point mapping $u\mapsto \Upsilon^u_T$ of~\eqref{EQ_bilinear_abstrait}. 
Moreover, for every $\psi_0$ in $D(|A|^{k/2})$, the set
$$ 
\bigcup_{\alpha \geq 0} 
\bigcup_{T\geq 0} 
\bigcup_{u \in BV([0,T],\R)} \{ \alpha \Upsilon^u_t\psi_0, t\in [0,T]\},
$$
is contained in a countable union of compact subsets of
$D(|A|^{k/2})$.
\end{theorem}

\begin{proof}
See Section~\ref{SEC_Higher_regularity}.
\end{proof}

\begin{remark}
 Theorem~\ref{THE_intro_weak_coupling}
 is a reformulation of 
 Theorem~\ref{THE_main_introduction}
 in the smaller functional framework of conservative
  dynamics. 
  \nuovo{The proof of Theorem~\ref{THE_intro_weak_coupling} is a consequence of Corollary~\ref{Cor:NoExactControllability2} below in the case of bounded variation controls. In Section~\ref{SEC_weak_coupling_for_radon} the result is then generalized to Radon measures controls. Corollary~\ref{Cor:NoExactControllability2} also provides an extension of Theorem~\ref{THE_intro_weak_coupling} from $D(|A|^{k/2})$ to $D(|A|^{k/2+1-\varepsilon})$ if $\psi_0$ is in $D(|A|^{k/2+1-\varepsilon})$, for $\varepsilon\in (0,1)$. }
 \end{remark}

\begin{remark}
A simple checkable condition for a pair of skew-adjoint operators $(A,B)$
to satisfy assumptions $(i)-(iii)$ in Theorem~\ref{THE_intro_weak_coupling} 
is to be weakly coupled in the sense of~\cite[Definition~1]{BoussaidCaponigroChambrion}.
See Lemma~\ref{LEM_lien_WC_nouvelle_definition_ancienne_definition} below. 
\end{remark}

\begin{remark}
 Recall that there exists $c\geq 0$ and $c'\geq 0$ such that $B-c$ 
and $-B-c'$ generate contraction semigroups on $D(|A|^{k/2})$ if and only if these operators are maximal dissipative in the functional space $D(|A|^{k/2})$. Assumption $(iii)$ in Theorem~\ref{THE_intro_weak_coupling} is, in some sense, an assumption on the commutator of $A$ and $B$, see Section~\ref{Sec:HigherOrderFEPS}. \nuovo{This condition replaces the Lipschitz assumption  
on the map $t \in \mathbf{R} \mapsto e^{tB} A e^{-tB} \in L(D(A),\Hc)$
of Theorem~\ref{THE_main_introduction}. }
\end{remark}

\subsubsection{Applications to the bilinear Schr\"odinger equation}

Here we consider the motion of a nonrelativistic quantum charged particle trapped in an infinite square potential well excited by an external electric field. That is the dynamics governed by a Schr\"odinger equation on the interval $(0,1)$ with a control potential $W: (0,1) \to \R$, which writes
\begin{equation}\label{Eq:Potential} 
\begin{cases}
\displaystyle \rmi \frac{\partial \psi}{\partial t}(t,x) = 
- \frac{\partial^2 \psi}{\partial x^2}(t,x)
- u(t) W(x) \psi (t,x) , \qquad x \in (0,1), t \in (0,T), \\
\psi(t,0)=\psi(t,1)=0.
\end{cases}
\end{equation}
We denote by  $H^s_{(0)}((0,1),\C)$
the domain of $|A|^{s/2}$ where $A$ is the Laplace--Dirichlet operator on $(0,1)$, and by $\varphi_k$, $k\in \N$ its (normalized) eigenvectors associated respectively to $\lambda_k$, $k\in\N$ its increasing sequence of eigenvalues (which are known to be simple). 
Let us recall the main result of \cite{camillo}. 
\begin{unumberedtheorem}[Theorem~1 in \cite{camillo}]
Let $T>0$ and $W \in H^{3}((0,1),\R)$ be such that there 
exists $c>0$ verifying  $
\frac{c}{k^3} \leqslant |\langle W \varphi_1 , \varphi_k \rangle |$, 
for all $k \in \N$.
Then there exists $\delta>0$ and a $C^1$ map 
$\Gamma:  \mathcal{V}_T  \rightarrow  L^2((0,T),\R)$
where
$$
\mathcal{V}_T := \{ \psi_f \in H^3_{(0)}((0,1),\C) \mid
\|\psi_f\|=1, \| \psi_f - \psi_1(T) \|_{H^3} < \delta  \},
$$
such that, $\Gamma( \psi_1(T) ) =0$ and for every $\psi_f \in  \mathcal{V}_T$,
the solution of~\eqref{Eq:Potential}
with initial condition $\psi(0)=\varphi_1$ and control $u=\Gamma(\psi_f)$ satisfies $\psi(T)=\psi_f$.
\end{unumberedtheorem}

The above result applies for instance to $W:x\mapsto x^2$.
\nuovo{Since the control potential is bounded, the input-output mapping $u\mapsto 
\Upsilon^u_T$ of \eqref{Eq:Potential} admits a unique continuous extension
to $L^1([0,T],\R)$.} The techniques introduced in the present analysis provide the following 
estimates from above for the attainable set when using different classes of admissible controls.
\begin{proposition}\nuovo{Let $T>0$ and $W :x\mapsto x^2$. Then:}
 \begin{itemize}
  \item 
 The attainable set from $\varphi_1$ with $L^1$ controls,
 $$
 \displaystyle{\mathcal{A}tt_{L^1}(\varphi_1)=\bigcup_{T\geq 0}
 \bigcup_{u \in L^1([0,T],\R)}\{\Upsilon^u_{t}\varphi_1|0\leq t \leq T\},}
 $$
satisfies 
$\displaystyle{\mathcal{A}tt_{L^1}(\varphi_1) \subset \bigcap_{s<5/2} 
H^s_{(0)}((0,1),\C)}$. 

\item
The attainable set from $\varphi_1$ with bounded variation ($BV$) controls, 
$$
\displaystyle{\mathcal{A}tt_{BV}(\varphi_1)=\bigcup_{T\geq 0}
 \bigcup_{u \in BV{((0,T],\R)}}\{\Upsilon^u_{t}\varphi_1|0\leq t \leq T\}},
$$
 is a $H^s$-dense  subset of
 $\{\psi\in L^2((0,1),\C)\mid \|\psi\|=1\}\cap H^{s}_{(0)}((0,1),\C)$
for every $s<9/2$.
 \end{itemize}
\end{proposition}

\begin{proof}
See Section~\ref{Sec:PotentialWell}.
\end{proof}

\subsection{Contents}

In Section~\ref{sec:abstract-framework}, we consider  bilinear evolution
 equations (not necessarily conservative) from an abstract point of view and
 we define the solution for controls with bounded variations. We also prove the
 well-posedness
 within this framework and the continuity of the propagators with respect
 to the control parameters.
 In Section~\ref{Sec:Interaction}, we use a reparametrization, inspired by the widely used 
 interaction framework, to extend the results of Section~\ref{sec:abstract-framework} to the case where 
 the control is a Radon measure. This provides a proof of Theorem \ref{THE_main_introduction}.  
  Section~\ref{Sec:HigherOrderFEPS} 
 is devoted to the regularity analysis of the solution obtained so far when further 
 assumptions are made on the control potential and to the proof of Theorem~\ref{THE_intro_weak_coupling}. 
    Section~\ref{Sec:BoundedPotential} is dedicated to  
 the case where $B$ is bounded and to the proof of Proposition 
\ref{PRO_introduction_BMS_Bborne}.
Section~\ref{sec:example} presents various examples.
The appendices contain notations and technical tools useful for the rest of the analysis.

\paragraph*{Acknowledgments.}
We acknowledge for this work the  financial support of the INRIA Color, the CNRS D\'efi InFIniti project DISQUO and the French Agence Nationale de la Recherche, contract number 17-CE40-0007-01. This long term
analysis was possible due to the support of our respective institutions (the universities of Bourgogne Franche-Comt\'e and 
Lorraine and the CNAM) as well as the facilities offered by the CNRS, the PIMS and the University of Victoria during the stay
of the first author and the facilities of the University of La Laguna, during the stay of the second author.

We are also grateful to many colleagues for the useful discussions that lead to many 
improvements since our first result, among them we especially thank Farid Ammar-Khodja, Alain Haraux, Marcelo Laca, and  Gilles Lancien. 
We are also grateful to the anonymous referee for his carefull reading and for the many comments which helped us in  improving the present work.

\section{Well-posedness and continuity for BV controls}\label{sec:abstract-framework}

In this section, we present  global well-posedness results for a
class of nonautonomous perturbations of a maximal dissipative linear Cauchy problem 
as well as a continuity criterion for a convergence problem.

\subsection{Abstract framework: definitions and notations}

We consider
a general framework for bilinear dynamics in Hilbert spaces. Classical
definitions and tools in this context can be found in \cite[Section X.8]{reed-simon-2}, as well as the associated notes and problems. Notice that however we consider an opposite sign for the generators, thus, following~\cite{Phillips}, we use the word
\emph{dissipative} instead of \emph{accretive} (see also~\cite[Notes of Section
X.8]{reed-simon-2}). As we restrict our analysis to the Hilbert space framework, the notion of
generators of contraction semigroups  (linear maps with norm less than one) and maximal dissipative 
operators coincide (see~\cite[Theorem 1.1.3]{Phillips}). The equivalence between these two notions is used in  our analysis at many levels, in particular, for what concerns {mild coupling} in Section~\ref{Sec:HigherOrderFEPS}.

Let $\Hc$ be a Hilbert space (possibly infinite dimensional) with scalar 
product $\langle \cdot,\cdot\rangle$ and  corresponding norm $\|\cdot\|$. Let
$A, B$ be two (possibly unbounded) dissipative operators on $\Hc$.  We consider the
formal bilinear control system
\begin{equation}\label{eq:main}
\frac{d}{dt} \psi(t)=A \psi(t) + u(t) B \psi(t),
\end{equation}
where the scalar control $u$ is to be chosen in a set of real functions.

In general, given an initial data $\psi(0)=\psi_0 \in \Hc$, the solution of
system~\eqref{eq:main} may not be well-defined. Indeed, even the definition of $A+B$ is
not obvious when $A$ and $B$ are unbounded. To this aim it is
usually assumed that the operators $A$ and $B$ satisfy the following
condition.
\begin{definition}\label{Def:KatoRellich}
Let $(A,B)$ be a couple of operators acting on $\Hc$. Then $B$ is said \emph{relatively bounded} with respect to $A$, or $A$-bounded, if 
$D(A) \subset D(B)$ and there exist $a, b>0$ such that for every $\psi$ in $D(A)$,
$\|B \psi \|\leq a \|A \psi\| +b \|\psi \|$.
\end{definition}
It is well-known that if $A$ is skew-adjoint and $B$ skew-symmetric, from Kato--Rellich Theorem, (see for
example~\cite[Theorem X.12]{reed-simon-2}),
if $B$ is relatively bounded with respect to $A$, then for every real constant
$u$ such that $|u| < 1/a$ (with $a$ from Definition~\ref{Def:KatoRellich}), $A+uB$ is skew-adjoint with domain $D(A)$ and
generates a group of unitary operators. System~\eqref{eq:main} is then
well-posed for every initial condition. 
From~\cite[Corollary to
Theorem X.50]{reed-simon-2}, 
$A+uB$ is maximal dissipative with domain $D(A)$ and
generates a contraction semigroup when $A$ is maximal dissipative, $B$ is dissipative, $B$ is $A$-bounded and  $0\leq u < 1/a$ (again $a$ is from Definition~\ref{Def:KatoRellich}).

In most of the examples presented in Section~\ref{sec:example} below, we consider the skew-adjoint case 
and $a$ arbitrary small, so that we can define the solutions of~\eqref{eq:main} for
every piecewise constant control $u$ with real values.

In the general case, we will refer to the following assumptions.
\begin{assumption}\label{ass:ass}
$(A,B,K)$ is a triple where $A$ is a maximal dissipative operator on $\Hc$, $B$ is an operator on $\Hc$ with $D(A)\subset D(B)$,
and $K$ a real interval containing $0$, such that for any $u\in K$, $A+uB$ is 
a maximal dissipative operator on $\Hc$ with domain $D(A)$.
\end{assumption}

Assumption~\ref{ass:ass} implies that the operator $B$ is $A$-bounded from $D(A)$ to $\Hc$ and 
allows us to define
\begin{equation}\label{def:Abound}
 \|B\|_{A}:=\inf_{\lambda >0}
\|B(\lambda-A)^{-1}\|.
\end{equation}
The number $\|B\|_{A}$ is the lower bound of all possible constants $a$ in Definition~\ref{Def:KatoRellich} and in principle it can be zero. We also have,
\begin{equation}\label{def:Abound2}
 \|B\|_{A}=\liminf_{\lambda\to+\infty }
\|B(\lambda-A)^{-1}\|.
\end{equation}

We  consider also the following assumption in order to extend the definition of propagator to the case of Radon measures controls (see Section~\ref{sec:radon}).
\begin{assumption}\label{ass:Interacting}
$(A,B,K)$ is a triple where $A$ is a maximal dissipative operator on $\Hc$, 
$K$ a real interval containing $0$, and  
\begin{enumerate}[label={{(A\arabic{assumption}.\arabic{*})}}, ref={{(A\arabic{assumption}.\arabic{*})}}]
\label{ASS_A_skew_adjoint2}
 \item there exists $c\geq 0$ and $c'\geq 0$ such that $B-c$ 
and $-B-c'$ generate contraction semigroups on $\Hc$ leaving $D(A)$ invariant,
\label{ASS_B_skew_symmetric}
\item for every $u\in \radon{[0,T]}$, with $u((0,t])\in K$ for any $t\in[0,T]$,
\[
t\in [0,T]\mapsto \mathcal{A}(t):=e^{u((0,t])B}Ae^{-u((0,t])B}, 
\]
is a family of
maximal dissipative operators with common domain $D(A)$
such that :
\begin{itemize}
 \item $\sup_{t\in [0,T]}
\left\|(1-\mathcal{A}(t))^{-1}\right\|_{{L}(\Hc,D(A))}<+\infty$,
 \item ${\mathcal A}$ has finite total variation from $[0,T]$ to $L(D(A),\Hc)$.
\end{itemize}
\label{ASS_calA_bounded_variation}
\end{enumerate}
\end{assumption}

\nuovo{A basic example of operators satisfying Assumption~\ref{ass:Interacting} is given by $A=-\partial_x$ and $B={\mathrm i}W(x)$, the 
operator of multiplication by a smooth bounded potential ${\mathrm i}W$ ($W$ real-valued),
acting on $L^2(\R)$ (the set $K$ being the real line $\mathbf{R}$). Then $\mathcal{A}(t)=-\partial_x + u((0,t])(\partial_xW)$.}

\begin{remark}
From Assumption~\ref{ASS_A_skew_adjoint2}, $B$ et $-B$, with same domains, are generators of continuous semigroups. We can prove $e^{-tB}=(e^{tB})^{-1}$, for any real $t$, and thus $B$ generates a continuous group. 
\end{remark}

The triple $(A,B,K)$ satisfies Assumption~\ref{ass:Interacting} for any 
interval $K$ containing $0$ if  the pair $(A,B)$ satisfies the following one.
\begin{assumption}\label{ass:StrongInteraction}
$(A,B)$ is a pair such that 
\begin{enumerate}[label={{(A\arabic{assumption}.\arabic{*})}}, ref={{(A\arabic{assumption}.\arabic{*})}}]
 \item $A$ is a maximal dissipative operator on $\Hc$ with domain
$D(A)$,
\label{ASS_A_skew_adjoint3}
 \item there exists $c\geq 0$ and $c'\geq 0$ such that $B-c$ 
and $-B-c'$ generate contraction semigroups on $\Hc$ leaving $D(A)$ invariant,
\label{ASS_B_skew_symmetric2}
\item the map $t\in \R \mapsto e^{t B}Ae^{-t B} \in
L(D(A),\Hc)$ is locally Lipschitz.\label{ass:commutator}
\end{enumerate}
\end{assumption}
\begin{remark}
Assumption~\ref{ass:commutator} is a strong assumption on the regularity of $B$ 
with respect to the scale of $A$. Indeed it implies that $B$ is the generator of 
a strongly continuous semigroup on $D(A)$ since the semigroups generated by $B$ or $-B$ are continuous on $\Hc$ from Assumption~\ref{ASS_B_skew_symmetric2} and 
\begin{align*}
 \left\|Ae^{-tB}\psi-A\psi\right\|&\leq e^{c't}\left\|e^{tB}Ae^{-tB}\psi-e^{tB}A\psi\right\|\\ 
&\leq e^{c't}\left\|e^{tB}Ae^{-tB}-A\|_{L(D(A),\Hc)}\|\psi\|_{D(A)}+\|e^{tB}A\psi-A\psi\right\|,
\end{align*}
for $t>0$ and $\psi \in D(A)$,
which provides the continuity on $D(A)$. In Section~\ref{Sec:HigherOrderFEPS} below, we consider higher regularity assumptions in the skew-adjoint case and operators on $D(|A|^k)$ with $k >1$. 
\end{remark}

\subsection{Propagators}\label{Sec:Propagators}

Since the problem~\eqref{eq:main} is nonautonomous, the notion of semigroup is replaced by the following 
\begin{definition}[Propagator on a Hilbert space]\label{Definition:Propagator}
 A family $(s,t)\in
\Delta_I\mapsto X(s,t)$ of linear contractions on a Hilbert space $\Hc$,
strongly
continuous in $t$ and $s$  and such that 
\begin{itemize}
\item[$(i)$]  $X(t,s)=X(t,r)X(r,s)$, for any $s<r<t$,
\item[$(ii)$] $X(t,t)=I_{\Hc}$,
\end{itemize}
is called a \emph{contraction propagator} on $\Hc$.
\end{definition}
\begin{remark}
In Section~\ref{Sec:Interaction} below, we
introduce a \nuovo{notion of generalized propagators,}
see Definition~\ref{def:propagator-radon}, with
relaxed assumptions on the continuity of
$(s,t)\mapsto X(s,t)$ in order to extend it to the framework of Radon measure controls. 
\end{remark}

Following \cite{Kato1953} in the construction of propagators, we introduce the following
\begin{assumption}\label{ass:kato-generale}
Let $\Dc$ be a dense subset of $\Hc$
\begin{enumerate}[label={{(A\arabic{assumption}.\arabic{*})}}, ref={{(A\arabic{assumption}.\arabic{*})}}]
\item \label{ASS:maximalDissipative} $A(t)$ is a maximal dissipative operator on $\Hc$ with domain $\Dc$ for every $t\in I$,
\item \label{Ass:BV} $t\mapsto A(t)$ has bounded variation from $I$ to $L(\Dc,\Hc )$, 
where $\Dc$ is endowed with the graph topology associated with $A(a)$ for some $a \in  
I$, 
 \item \label{Ass:ResolventBound}
 $
 M:=\sup_{t\in I} \left\|(1-A(t))^{-1}\right\|_{L(\Hc,\Dc)}< \infty.
 $
\end{enumerate}
\end{assumption}

In the following, Assumption~\ref{ass:kato-generale} will apply mainly to the family of operators 
$A(t) = A+u(t)B$ or $A(t) = e^{ - u((0,t])B}\!Ae^{u((0,t])B}$.

\begin{remark}
In Assumption \ref{Ass:BV}, the bounded variation of $t\mapsto A(t)$ ensures that any choice of 
$a\in I$ will be equivalent.
\end{remark}

\begin{remark}\label{Remark:AssumptionKato}
%
We do not assume $t \mapsto A(t)$ to be continuous. However, as a
consequence of Assumption~\ref{Ass:BV} (see~\cite[Theorem~3]{Edwards})
it admits right and left limit in $L(\Dc,\Hc )$, denoted by $A(t - 0):=\lim_{\varepsilon
\to 0^{+}}{A(t-\varepsilon)}$ and $A(t + 0) :=\lim_{\varepsilon
\to 0^{+}}A(t+\varepsilon)$, for all $t \in I$, and $ A(t-0)= A(t+0)$ for all $t
\in I$ except, at most, a countable set.
\end{remark}

The core of our analysis is the following result due to Kato 
(see~\cite[Theorem 2 and Theorem 3]{Kato1953}) providing sufficient conditions 
for the well-posedness of system~\eqref{eq:main}.
\begin{theorem}\label{thm:kato}
If $t \in I \mapsto A(t)$ satisfies
Assumption~\ref{ass:kato-generale}, then
there exists a unique contraction propagator 
$X:\Delta_{I}\to L(\Hc)$ such that 
if $\psi_0\in \Dc$ then $X(t,s)\psi_0\in \Dc$ and is strongly right differentiable 
in $t$ with derivative $A(t+0)X(t,s)\psi_0$.

Moreover, with $M$ from Assumption~\ref{Ass:ResolventBound}, 
$$
 \|A(t)X(t,s)\psi_0\|\leq M e^{M\TV{A}{I}{L(\Dc,\Hc)}}\|A(s)\psi_0\|, \quad \mbox{ for } (t,s)\in \Delta_{I}\mbox{ and } \psi_0 \in \Dc,
$$
and
$X(t,s)\psi_0$ is  left 
differentiable in $s$ with derivative
$-A(s-0)\psi_0$ when $t=s$.

In the case in which $t \mapsto A(t)$ is continuous and
skew-adjoint,
if $\psi_0\in \Dc$  then $t\in(s,+\infty)\mapsto X(t,s)\psi_0$  is 
strongly continuously differentiable in $\Hc$ with derivative $A(t)X(t,s)\psi_0$.
\end{theorem}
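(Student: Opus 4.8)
The plan is to construct $X$ as a strong limit of piecewise-frozen approximate propagators, following Kato's original approximation scheme. For a finite partition $\pi: s = t_0 < t_1 < \cdots < t_n = t$ of $[s,t]$, I would freeze the generator on each subinterval and set
$X_\pi(t,s) = e^{(t_n - t_{n-1})A(t_{n-1})}\cdots e^{(t_1-t_0)A(t_0)}$.
Since each $A(t_{j-1})$ is maximal dissipative it generates a contraction semigroup (Hille--Yosida, Proposition~\ref{Prop:ThmHilleYosida}), so each $X_\pi(t,s)$ is a contraction, maps $\Dc$ into $\Dc$, and satisfies the cocycle identity. These $X_\pi$ are the natural candidates whose limit will be $X$.

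The first key step is an a priori energy estimate, uniform over partitions. Writing $\psi_j = X_\pi(t_j,s)\psi_0$ and $a_j = \|A(t_j)\psi_j\|$, I would use that the frozen semigroup commutes with its generator, so $\|A(t_{j-1})\psi_j\| \le \|A(t_{j-1})\psi_{j-1}\|$, and then pass from $A(t_{j-1})$ to $A(t_j)$ at the cost of $\delta_j := \|A(t_j)-A(t_{j-1})\|_{L(\Dc,\Hc)}$ times the graph norm $\|\psi_j\|_{\Dc}$. Assumption~\ref{Ass:ResolventBound} bounds this graph norm via $\|\psi_j\|_{\Dc} \le M\|(1-A(t_{j-1}))\psi_j\| \le M(\|\psi_j\|+\|A(t_{j-1})\psi_j\|) \le M(\|\psi_0\|+a_{j-1})$, giving the recursion $a_j \le a_{j-1}(1+M\delta_j) + M\delta_j\|\psi_0\|$. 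Because $\sum_j \delta_j \le \|A\|_{BV(I,L(\Dc,\Hc))}$ by Assumption~\ref{Ass:BV}, a discrete Grönwall inequality yields the claimed bound $\|A(t)X_\pi(t,s)\psi_0\| \le M e^{M\|A\|_{BV(I,L(\Dc,\Hc))}}\|A(s)\psi_0\|$ independently of $\pi$.

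Next I would prove convergence. For $\psi_0 \in \Dc$ the family $\{X_\pi(t,s)\psi_0\}$ is bounded in the graph norm by the energy estimate, and I would show it is Cauchy in $\Hc$ as the mesh is refined, via a telescoping identity expressing $X_{\pi'}-X_\pi$ as a sum of differences of frozen semigroups, each term bounded by $\delta_j$ times the uniform graph bound. The total error is controlled by the oscillation of $A$ over the partition, which tends to zero because a bounded variation function has at most countably many discontinuities (Remark~\ref{Remark:AssumptionKato}). This defines $X(t,s)\psi_0 := \lim_\pi X_\pi(t,s)\psi_0$ on the dense set $\Dc$, and contractivity extends it to all of $\Hc$. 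Contraction, the cocycle property, and strong continuity pass to the limit, while closedness of $A(t)$ together with weak compactness of $\{A(t)X_\pi(t,s)\psi_0\}$ gives $X(t,s)\psi_0 \in \Dc$ with the energy estimate preserved by weak lower semicontinuity of the norm.

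The differentiability statements I would obtain by comparing $X(t,s)$ on a short interval with the single frozen semigroup $e^{(t-s)A(s+0)}$: the energy estimate shows the remainder is $o(t-s)$, so the one-sided derivatives exist and equal $A(t+0)\psi_0$ and $-A(t-0)\psi_0$, the one-sided limits existing by the bounded variation hypothesis. Uniqueness follows from the standard argument that, for a second such propagator $Y$, the map $\sigma \mapsto Y(t,\sigma)X(\sigma,s)\psi_0$ is continuous with vanishing one-sided derivatives, hence constant, forcing $X=Y$. Finally, when $t \mapsto A(t)$ is continuous and skew-adjoint the $X_\pi$ are unitary and $A(t+0)=A(t-0)=A(t)$, so the one-sided derivatives coincide and continuity of $t\mapsto A(t)X(t,s)\psi_0$ upgrades them to a genuine strong derivative $\frac{d}{dt}X(t,s)\psi_0 = A(t)X(t,s)\psi_0$. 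I expect the convergence step to be the main obstacle, since the telescoping estimate must absorb the accumulation of errors using only the bounded variation and uniform resolvent bounds, with no continuity of $A$ available.
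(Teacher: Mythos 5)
Your route --- Riemann products of frozen contraction semigroups, a uniform a priori estimate, convergence under refinement, one-sided differentiability, then uniqueness --- is precisely the argument of \cite{Kato1953}, which is also all the paper itself does: its proof of Theorem~\ref{thm:kato} is a pointer to \cite[\S 3.8--3.10]{Kato1953} together with the identification of the constant $C=Me^{MN}$. So the approach is the intended one, but two of your steps do not work as written. The first is the energy estimate. Your recursion $a_j\le a_{j-1}(1+M\delta_j)+M\delta_j\|\psi_0\|$ is correct, but Gr\"onwall applied to it gives $a_n+\|\psi_0\|\le e^{MN}\left(a_0+\|\psi_0\|\right)$, a \emph{graph-norm} estimate; it does not give $a_n\le Me^{MN}a_0$, and no argument can, because the pure bound fails as soon as $A(s)$ has a kernel: take $\Hc=\C^2$, $A(t)=\mathrm{diag}(\rmi,0)$ for $t<1$ and $A(t)=\mathrm{diag}(0,\rmi)$ for $t\ge 1$ (skew-adjoint, hence maximal dissipative, and piecewise constant hence BV with all assumptions satisfied), $s=0$, $\psi_0=(0,1)$; then $\|A(t)X(t,0)\psi_0\|=1$ for $t\ge 1$ while $\|A(0)\psi_0\|=0$. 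What one can prove (and what Kato proves, in a setting where the operators are normalized to be invertible, which is the source of the discrepancy with the displayed inequality) is the estimate with $1-A(\cdot)$ in place of $A(\cdot)$, equivalently with graph norms; you should carry the $\|\psi_0\|$ term throughout, and everything downstream survives this.

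The second, more serious, gap is the convergence step. Bounding each telescoping term by $\delta_j$ times the uniform graph bound and summing gives a total of order $N=\|A\|_{BV(I,L(\Dc,\Hc))}$, which does not shrink under refinement; and the mechanism you invoke to rescue this --- that the oscillation of $A$ over the partition tends to zero because a BV function has at most countably many discontinuities --- is false: a BV function can jump, so the supremum of its oscillations over subintervals does not tend to zero with the mesh. The correct mechanism is Duhamel's identity $e^{\tau A(r)}\phi-e^{\tau A(r')}\phi=\int_0^\tau e^{(\tau-\sigma)A(r)}\bigl(A(r)-A(r')\bigr)e^{\sigma A(r')}\phi\,d\sigma$, valid for $\phi\in\Dc$, which produces a factor of the subinterval \emph{length} in each term; the total error is then at most $|\Delta|\, N\, Me^{MN}\|\psi_0\|_{\Dc}$, where $|\Delta|$ is the mesh, and this does tend to zero under refinement. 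This is exactly the estimate the paper quotes from \cite[\S 3.8]{Kato1953} in the proof of Proposition~\ref{prop:continuity} (the factor $|\Delta|$ there). With these two repairs --- graph norms in the a priori bound, and the length factor from Duhamel in the telescoping --- your outline becomes a faithful reconstruction of Kato's proof, and the differentiability, uniqueness, and skew-adjoint statements can be completed as you describe.
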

\begin{proof}
The statement of this theorem is  obtained by collecting statements 
of~\cite{Kato1953}. 
The point not clearly stated in~\cite{Kato1953} is 
the existence of $C>0$ such that
\[
 \|A(t)X(t,s)\psi_0\|\leq C\|A(s)\psi_0\|,
\]
 for $(t,s)\in \Delta_{I}$  and for any $\psi_0\in \Dc$. This is in~\cite[\S 3.10]{Kato1953} with $C=M\exp(MN)$ and 
\[
M =\sup_{t\in I} \left\|(1-A(t))^{-1}\right\|_{L(\Hc,\Dc)}
\quad\text{ and }\quad N = \TV{A}{I}{L(\Dc,\Hc)}.\qedhere
\]
\end{proof}

We call $t\mapsto X(t,s)\phi_0$ a ``mild'' solution in $\Hc$ of
\begin{equation}\label{Eq:NonAutonomousCauchy}
 \begin{cases}
  \frac{d}{dt} \phi(t)=A(t)\phi(t),\\
\phi(s)=\phi_0,
 \end{cases}
\end{equation}
even if, in general, it is not differentiable. 

\begin{remark}
If $(A,B,K)$ satisfies Assumption~\ref{ass:Interacting}, the operator $t \in [0,T] 
\mapsto \mathcal{A}(t)\!:=e^{u((0,t])B}\!Ae^{-u((0,t])B}$ defined in
Assumption~\ref{ASS_calA_bounded_variation} satisfies 
Assumption~\ref{ass:kato-generale} for any Radon measure $u$ on $(0,T)$ with 
 $u((0,t])\in K$ for any $t\in(0,T]$. If $(A,B)$ satisfies Assumption~\ref{ass:StrongInteraction} then
$(A,B,\R)$ satisfies Assumption~\ref{ass:Interacting}.
\end{remark}

The fact that Assumption~\ref{ass:ass} is stronger, in some sense, than  
Assumption~\ref{ass:kato-generale} is the content of the following 
lemma.
\begin{lemma}\label{Rem:NonInteractingCase}
If $(A,B,K)$  satisfies Assumption~\ref{ass:ass} and $u:[0,T]\mapsto K$ 
has bounded total variation such that $\overline{u([0,T])}\subset K$
then $A(t):=A+u(t)B$ satisfies
Assumption~\ref{ass:kato-generale} with $I=[0,T]$. 
\end{lemma}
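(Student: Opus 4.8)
The plan is to verify each clause of Assumption~\ref{ass:kato-generale} for the family $A(t):=A+u(t)B$ on $I=[0,T]$, taking $\Dc=D(A)$ endowed with the graph norm of $A$. First I would check the maximal dissipativity clause: since $u$ has bounded variation with values in $K$, each $u(t)\in K$, and by Assumption~\ref{ass:ass} the operator $A+u(t)B$ is maximal dissipative with domain $D(A)$ for every such value. This gives clause one immediately and also confirms that the common domain $\Dc=D(A)$ is the right choice, dense in $\Hc$ because $A$ is maximal dissipative.

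Next I would establish the bounded-variation clause~\ref{Ass:BV}, namely that $t\mapsto A(t)\in L(D(A),\Hc)$ has bounded variation. For $s<t$ one computes $A(t)-A(s)=(u(t)-u(s))B$ as operators from $D(A)$ to $\Hc$; here I would invoke the fact, noted right after Assumption~\ref{ass:ass}, that $B\in L(D(A),\Hc)$ is bounded (with $\|B\|_{L(D(A),\Hc)}$ controlled via \eqref{def:Abound}). Hence for any partition,
\[
\sum_j \|A(t_j)-A(t_{j-1})\|_{L(D(A),\Hc)}
=\|B\|_{L(D(A),\Hc)}\sum_j |u(t_j)-u(t_{j-1})|
\le \|B\|_{L(D(A),\Hc)}\,\|u\|_{BV([0,T],\R)},
\]
so $\|A\|_{BV([0,T],L(D(A),\Hc))}\le \|B\|_{L(D(A),\Hc)}\|u\|_{BV}<\infty$. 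This is the step where the $A$-boundedness of $B$ is essential, turning total variation of the scalar control directly into total variation of the operator family.

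Finally I would verify the uniform resolvent bound~\ref{Ass:ResolventBound}, that is $M:=\sup_{t}\|(1-A(t))^{-1}\|_{L(\Hc,D(A))}<\infty$. For each fixed $t$, maximal dissipativity of $A(t)$ makes $1-A(t)$ invertible with $\|(1-A(t))^{-1}\|_{L(\Hc)}\le 1$ by Hille--Yosida, and since $A(t)(1-A(t))^{-1}=(1-A(t))^{-1}-1$ the graph norm of $(1-A(t))^{-1}\phi$ is controlled, giving finiteness for each $t$; the real content, as emphasized in Remark~\ref{Remark:AssumptionKato}, is the \emph{uniformity} in $t$. I expect this uniform bound to be the main obstacle. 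I would handle it by writing, for $u(t)\in K$ with $K$ compact (or by reducing to the finitely many relevant values plus a perturbation argument), the second resolvent identity
\[
(1-A(t))^{-1}=(1-A)^{-1}\bigl(1-u(t)B(1-A(t))^{-1}\bigr)^{-1},
\]
and using $\|B(1-A)^{-1}\|$ together with the $L(\Hc)$-contractivity of $(1-A(t))^{-1}$ to bound $\|B(1-A(t))^{-1}\|$ uniformly, so that the $L(\Hc,D(A))$ norm stays finite uniformly over the bounded range of $u$ on $[0,T]$. Once all three clauses hold, $A(t)=A+u(t)B$ satisfies Assumption~\ref{ass:kato-generale}, which is the assertion of the lemma.
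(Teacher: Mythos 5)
Your handling of the first two clauses is correct and coincides with the paper's (which treats them as immediate): clause one follows directly from Assumption~\ref{ass:ass}, and clause~\ref{Ass:BV} from $A(t)-A(s)=(u(t)-u(s))B$ together with $B\in L(D(A),\Hc)$, giving total variation at most $\|B\|_{L(D(A),\Hc)}\|u\|_{BV([0,T],\R)}$. The gap is in clause~\ref{Ass:ResolventBound}, which you rightly single out as the main obstacle but do not actually overcome. First, your displayed identity is wrong: from $1-A-u(t)B=\bigl(1-u(t)B(1-A)^{-1}\bigr)(1-A)$ one gets
\[
(1-A(t))^{-1}=(1-A)^{-1}\bigl(1-u(t)B(1-A)^{-1}\bigr)^{-1},
\]
with $(1-A)^{-1}$ inside the bracket, not $(1-A(t))^{-1}$; the version with $(1-A(t))^{-1}$ inside holds only in the additive form $(1-A)(1-A(t))^{-1}=1+u(t)B(1-A(t))^{-1}$. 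Second, and more seriously, inverting $1-u(t)B(1-A)^{-1}$ by a Neumann series requires $|u(t)|\,\|B(1-A)^{-1}\|<1$, a smallness condition that Assumption~\ref{ass:ass} does not supply: it bounds neither $K$ nor $\|B\|_A$, it merely postulates that $A+uB$ is maximal dissipative with domain $D(A)$ for each $u\in K$. Your perturbation is anchored at $u=0$, i.e.\ at $A$ itself, and breaks down as soon as $u$ takes values with $|u(t)|\,\|B(1-A)^{-1}\|\geq 1$. The fallback you suggest, bounding $\|B(1-A(t))^{-1}\|$ from the $L(\Hc)$-contractivity of $(1-A(t))^{-1}$, is circular: $B$ is only bounded from $D(A)$ to $\Hc$, so this requires a bound on $(1-A)(1-A(t))^{-1}$, which is the very quantity at stake.

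The repair, and the paper's actual argument, is to perturb around \emph{every} point of the range of $u$ rather than around $0$. For $v\in K$ set $C(v):=\|(1-A)(1-A-vB)^{-1}\|_{L(\Hc)}$, which is finite for each fixed $v$ (maximal dissipativity with domain $D(A)$ plus the closed graph theorem). Writing $b:=\|B(1-A)^{-1}\|$, the second resolvent identity yields
\[
\bigl(1-|u-v|\,b\,C(v)\bigr)\,
\bigl\|(1-A)(1-A-uB)^{-1}-(1-A)(1-A-vB)^{-1}\bigr\|
\leq |u-v|\,b\,C(v)^2,
\]
so only the \emph{increment} $|u-v|$ needs to be small, and $v\mapsto (1-A)(1-A-vB)^{-1}$ is continuous on all of $K$ with no smallness hypothesis on $v$. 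Since $u$ has bounded variation, its range is bounded, hence has compact closure, and Heine's theorem then gives $\sup_{t\in[0,T]}\|(1-A(t))^{-1}\|_{L(\Hc,D(A))}<\infty$. Your parenthetical remark about ``finitely many relevant values plus a perturbation argument'' gestures toward exactly this compactness-plus-local-perturbation scheme, but the proposal never carries it out, and the estimate it does rely on is unavailable under the stated hypotheses.
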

\begin{proof}
The only point to verify is Assumption~\ref{Ass:ResolventBound}. 
First the set $C:=\overline{u([0,T])}$ is a bounded closed subset of $K$ and thus is a compact of $K$.
Then the map
\[
u\mapsto (1-A)(1-A-uB)^{-1},
\]
is continuous from $K$ to $L(\Hc)$. Indeed
\begin{align*}
(1-A)(1-A-uB)^{-1}-&(1-A)(1-A-v B)^{-1}\\
&=(1-A)\left((1-A-uB)^{-1}-(1-A-v B)^{-1}\right)\\
&=(v-u)(1-A)\left((1-A-uB)^{-1}B(1-A-v B)^{-1}\right)\\
&=(v-u)(1-A)\left((1-A-uB)^{-1}B(1-A)^{-1}(1-A)(1-A-v B)^{-1}\right)
\end{align*}
so that
\begin{align*}
 (1-&A)(1-A-u B)^{-1}- (1-A)(1-A-v B)^{-1}\\
&-(v-u)(1-A)(1-A-uB)^{-1}B(1-A)^{-1}\left((1-A)(1-A-v B)^{-1}-(1-A)(1-A-u B)^{-1}\right)\\
&=(v-u)(1-A)\left((1-A-uB)^{-1}B(1-A)^{-1}(1-A)(1-A-u B)^{-1}\right).
\end{align*}
Define
\[
 L(u)=\|(1-A)(1-A-uB)^{-1}\|_{L(\Hc)}\quad\text{and}\quad b=\|B(1-A)^{-1}\|,
\]
so that
\begin{equation}\label{EQ_norme_difference_resolvante}
 (1-|v-u|bL(u))\|(1-A)(1-A-uB)^{-1}-(1-A)(1-A-v B)^{-1}\|
\leq|v-u|L(u)^2b,
\end{equation}
which provides the desired continuity at $u$. Then as $|u(t)-u(0)|\leq \|u\|_{BV(I)}$ for any $t\in I$,
$u(t)$ is in $C$ a compact subset of $K$ for all $t\in I$ thus the closure of its image is compact and
\[
 t\in I\mapsto \|(1-A-u(t)B)^{-1}\|_{L(\Hc,\Dc)}
\]
is bounded.
\end{proof}

\subsection{Continuity}
In this section we focus on the continuity of the propagators with respect to the control $u$. The main tool, Proposition~\ref{prop:continuity} below, is a consequence of the work~\cite{Kato1953} by Kato.

\begin{definition}
Let $(A_n)_n$ be a family of generators of contraction semigroups and $A$ a generator of a contraction semigroup.
The family $(A_n)_n$
tends to $A$ in the \emph{strong resolvent 
sense} if 
$$
(\lambda-A_n)^{-1} \phi \to (\lambda-A)^{-1}\phi\quad \mbox{ as } n \to \infty,
$$
for every $\phi$ in $\Hc$ and for some $\lambda >0$ (and hence all $\lambda >0$, see 
\cite[Section~VIII.7]{reed-simon-1}).   
\end{definition}

\begin{proposition}\label{prop:continuity} 
Let $(A_{n})_{n\in \N}$ and $A$  satisfy Assumption~\ref{ass:kato-generale}.  
Let 
$(\Dc_n)_{n\in \N}$ and 
$\Dc$ be their respective domains (for any
$t\in I$). 
Let $X_n$ (respectively $X$) be the contraction propagator associated with
$A_n$ (respectively $A$).

Assume that:
\begin{itemize}
 \item[$(i)$] $\sup_{n\in N} \sup_{t\in I}\|(1-A_{n}(t))^{-1}\|_{L(\Hc,\Dc_n)} < 
+ \infty$,
 \item[$(ii)$] $A_{n}(\tau)$ converges to $A(\tau)$ in the strong resolvent 
sense for
almost every $\tau \in I$ as $n \to \infty$,
 \item[$(iii)$] $\sup_{n\in \N}
 \TV{A_{n}}{I}{L(\Dc_n,\Hc)} < + \infty$, 
 \item[$(iv)$]
 For every $\phi \in \Hc$, $\delta >0$, $n\in \N$ there exists $\psi^n \in \Dc_n$ with $\|\phi - \psi^n\|<\delta$ such that $\sup_{n\in \N}\|A_{n}(a) \psi^n\| < +\infty$ for some $a\in 
I$.

\end{itemize}
Then $X_{n}(t,s)$ tends strongly to $X(t,s)$ locally uniformly in $s,t\in 
\Delta_I$.
\end{proposition}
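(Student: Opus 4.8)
The plan is to compare the two propagators through the piecewise-autonomous (frozen-coefficient) approximations underlying the construction of Theorem~\ref{thm:kato}, rather than to differentiate across the two a priori different domains $\Dc_n$ and $\Dc$. First I would record the uniform a priori bounds. By Theorem~\ref{thm:kato} each $A_n$ (and $A$) generates a unique contraction propagator, and the estimate there gives $\|A_n(t)X_n(t,s)\phi\|\le M e^{MN}\|A_n(s)\phi\|$ with $M=\sup_n\sup_t\|(1-A_n(t))^{-1}\|_{L(\Hc,\Dc_n)}$ and $N=\sup_n\|A_n\|_{BV(I,L(\Dc_n,\Hc))}$, both finite by~(i) and~(iii). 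Writing $A_n(t)=A_n(a)+(A_n(t)-A_n(a))$ and combining~(iii) with~(iv) also yields $\sup_n\sup_{t\in I}\|A_n(t)\|_{L(\Dc_n,\Hc)}<+\infty$, so the energies of all the $A_n$ are controlled uniformly in $n$. Since every propagator is a contraction, $\|X_n(t,s)\|_{L(\Hc)}\le 1$, and a standard $3\varepsilon$ argument reduces the claim to strong convergence on a dense subset of $\Hc$; the uniform bounds then let local uniform convergence in $(s,t)$ on that dense subset propagate to all of $\Hc$.

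Next, fixing a full-measure set $I_0\subset I$ on which~(ii) holds, for a partition $\Pi=\{s=\tau_0<\dots<\tau_k=t\}$ with nodes in $I_0$ I would introduce the frozen-coefficient propagators
$$
X^{\Pi}(t,s)=e^{(\tau_k-\tau_{k-1})A(\tau_{k-1})}\cdots e^{(\tau_1-\tau_0)A(\tau_0)},\qquad X_n^{\Pi}(t,s)=e^{(\tau_k-\tau_{k-1})A_n(\tau_{k-1})}\cdots e^{(\tau_1-\tau_0)A_n(\tau_0)},
$$
and split
$$
X_n(t,s)\phi-X(t,s)\phi=\big(X_n(t,s)-X_n^{\Pi}(t,s)\big)\phi+\big(X_n^{\Pi}(t,s)-X^{\Pi}(t,s)\big)\phi+\big(X^{\Pi}(t,s)-X(t,s)\big)\phi.
$$
The first and third terms are controlled by the frozen-coefficient error in the construction of Theorem~\ref{thm:kato}: this error has the form $C(\Pi)\,\|A_n(s)\phi\|$ (resp. $C(\Pi)\,\|A(s)\phi\|$), where $C(\Pi)\to 0$ as the mesh of $\Pi$ tends to $0$ and the prefactor depends only on $M$ and $N$. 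By the uniform energy bound, the prefactor in the first term is uniform in $n$, so a single sufficiently fine $\Pi$ makes the first and third terms below $\varepsilon/3$ simultaneously for every $n$. For the middle term, with $\Pi$ fixed, I would invoke the classical autonomous Trotter--Kato theorem: by~(ii), $A_n(\tau_j)\to A(\tau_j)$ in the strong resolvent sense at each node $\tau_j\in I_0$, hence $e^{\sigma A_n(\tau_j)}\to e^{\sigma A(\tau_j)}$ strongly, uniformly for $\sigma$ in the relevant compact interval; since a product of a fixed finite number of contractions depends strongly-continuously on its factors, $X_n^{\Pi}(t,s)\phi\to X^{\Pi}(t,s)\phi$ as $n\to\infty$, and this term is below $\varepsilon/3$ for $n$ large. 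Combining the three bounds gives $X_n(t,s)\phi\to X(t,s)\phi$, and all rates being governed by $M$, $N$, the bounded length of $I$ and the uniform energies yields the local uniformity in $(s,t)$.

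The step I expect to be the main obstacle is making the frozen-coefficient error in the first term uniform in $n$ while respecting that $\Dc_n$ need not coincide with $\Dc$. The natural comparison vector $X(\tau,s)\phi\in\Dc$ need not lie in $\Dc_n$, so the Duhamel integrand $(A(\tau)-A_n(\tau))X(\tau,s)\phi$ is not even defined and the generators cannot be compared directly; this is exactly why I route the argument through semigroups and strong resolvent convergence, where unequal domains are harmless. Concretely one must (a) exhibit a dense set of $\phi$ for which $\sup_n\|A_n(s)\phi\|$ is finite, built by regularizing with the resolvents $(1-A_n(s))^{-1}$ and using~(ii) so that the regularized vectors converge while remaining in the respective domains, and (b) check that the constant extracted from~\cite{Kato1953} for the frozen-coefficient estimate depends on $A_n$ only through $M$ and $N$. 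Both are guaranteed by~(i), (iii) and~(iv); once they are in place, the three-term estimate closes and the proposition follows.
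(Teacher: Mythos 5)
Your proposal is correct and takes essentially the same route as the paper: both reduce the comparison to Kato's Riemann-product (frozen-coefficient) approximation, whose error is controlled uniformly in $n$ by the constants $M$ and $N$ supplied by hypotheses (i), (iii), (iv), and then pass to the limit in the frozen autonomous semigroups at partition nodes where the strong resolvent convergence (ii) holds. The only difference is that you invoke the classical Trotter--Kato approximation theorem for contraction semigroups as a black box for that last step, whereas the paper reproves it inline via Yosida approximants, a Duhamel identity for the bounded generators $\lambda^{2}(\lambda-A_{n}(\tau))^{-1}$, and dominated convergence.
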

\begin{proof}
Using~\cite[\S 3.8]{Kato1953} it is sufficient to prove the statement for piecewise constant operator-valued functions (i.e. replacing $X_n$ and $X$ by any of their Riemann products) as follows: Let 
$\Delta:=\{s=t_0<t_1<\ldots<t_n=t\}$ be  a partition of the interval
$(t,s)$ and $X_{n}(\Delta)$ be the propagator associated with
$
\displaystyle \sum_{j =1}^n A_{n}(t_{j-1})
\chi_{[t_{j-1},t_{j})}$.
Then, from \cite[Equation (3.16)]{Kato1953}, for every $n$,
$$
\|\left(X_{n}(t,s;\Delta)-X_{n}(t,s)\right)\phi\|\leq \overline{M}e^{\overline{M}\, \overline{N}}\overline{N}|\Delta|\|A_n(a)\phi\|, \mbox{ for every } \phi \in \Dc_n
$$
where
\begin{align*}
 \overline{M} &=\max  \{\sup_{t\in I} \sup_{n\in \N} \left\|(1-A_n(t))^{-1}\right\|_{L(\Hc,\Dc_n)},
 \sup_{t\in I} \left\|(1-A(t))^{-1}\right\|_{L(\Hc,\Dc)}\},\\
 \overline{N} &= \max\{ \sup_{n\in\N}\TV{A_{n}}{I}{L(\Dc_n,\Hc)}, \TV{A}{I}{L(\Dc,\Hc)}\},
\end{align*}
and $|\Delta|=\sup_{1\leq j \leq n}|t_j-t_{j-1}|$. Similarly we define 
$X(\Delta)$ as the propagator associated with
$$
\displaystyle \sum_{j =1}^n A(t_{j-1})
\chi_{[t_{j-1},t_{j})}.
$$ 
We have
$$
\|\left(X(t,s;\Delta)-X(t,s)\right)\phi\|\leq \overline{M}e^{\overline{M}\, \overline{N}}\overline{N}|\Delta|\|A(a)\phi\|, \mbox{ for every } \phi \in \Dc.
$$
Following the
proof of \cite[Theorem X.47a (Hille--Yosida)]{reed-simon-2} (see also
Proposition~\ref{Prop:ThmHilleYosida} below), we have that 
\[
 \left\|e^{t A_n(\tau)}\phi - e^{t A^\lambda_n(\tau)}\phi\right\| \leq t 
\left\|A_n(\tau)\phi -A^\lambda_n(\tau)\phi\right\|,\quad  \mbox{ for every } \phi \in 
\mathcal{D}_n,
\]
with $A^\lambda_n(\tau):=\lambda(\lambda-A_n(\tau))^{-1}A_n(\tau)$, for
$\lambda >0$. \nuovo{This estimates can be obtained by integrating for $s\in[0,t]$ the derivative with respect to $s$ of
$$
 e^{s A_n(\tau)}e^{(t-s) A^\lambda_n(\tau)}\phi,
$$
using the fact that $A_n(\tau)$ and $A^\lambda_n(\tau)$ commute, the triangle inequality, and the fact that the $e^{s A_n(\tau)}$  and $e^{(t-s) A^\lambda_n(\tau)}$ are contractions (see, for instance,~\cite[Proof of Theorem X.47a (Hille--Yosida)]{reed-simon-2}.}
 
We also have
\[
 \left\|e^{t A(\tau)}\phi - e^{t A^\lambda(\tau)}\phi\right\| \leq t 
\left\|A(\tau)\phi -A^\lambda(\tau)\phi\right\|,\quad  \mbox{ for every } \phi \in 
\mathcal{D},
\]
with $A^\lambda(\tau):=\lambda(\lambda-A(\tau))^{-1}A(\tau)$. 

Since $A_n$  are generators of contraction semigroups, then 
$\|\lambda(\lambda-A_n(\tau))^{-1}\| \leq 1$ for every $\lambda>0$, in particular it is uniformly bounded in $n$ and $\tau$. 

By assumption $(iv)$ for every $\phi\in\Hc$ and $\delta>0$ there exist $\psi \in \Dc$ and $\psi^n\in\Dc_n$ such 
that 
$$
 \|\phi-\psi\|\leq \delta\quad\mbox{ and }\quad \|\phi-\psi^n\|\leq \delta,
$$
and 
$\sup_{n\in \N}\|A_{n}(a) \psi^n\| < +\infty$ for $a\in I$.
We deduce that $\lambda(\lambda-A_n(\tau))^{-1}\psi^n$ tends to $\psi^n$ as $\lambda\to \infty$ uniformly in $n$ and $\tau$. 
Similarly $A^\lambda(\tau)\psi$ tends strongly to $A(\tau)\psi$ 
uniformly in $\tau$ as $\lambda \to \infty$. 
So that
\begin{eqnarray*}
\left  \| e^{tA(\tau)}\phi -e^{tA_n(\tau)}\phi \right \|& 
\leq & 2\delta+\left \|
e^{tA(\tau)}\psi -
e^{tA^\lambda(\tau)}\psi \right \|+ \left \| e^{tA^\lambda (\tau)}\phi
-e^{tA^\lambda_n(\tau)}\phi \right 
\|+
\left \| e^{tA^\lambda_n (\tau)}\psi^n -e^{tA_n(\tau)}\psi^n \right \|\\
&\leq& 
2 \delta+ t \left \| A (\tau)\psi -A^\lambda(\tau)\psi \right \| + 
\left \| e^{tA^\lambda (\tau)}\phi
-e^{tA^\lambda_n(\tau)}\phi \right \| +
t \left \| A_n
(\tau)\psi^n -
A^\lambda_n(\tau)\psi^n \right \|.
\end{eqnarray*}
It is sufficient to show convergence of $\left \| e^{tA^\lambda (\tau)}\phi
-e^{tA^\lambda_n(\tau)}\phi \right \|$ as $n \to \infty$ in order to conclude the proof. 
Since $e^{t A_n^\lambda(\tau)}=e^{-\lambda t } e^{t \lambda^2
(\lambda-A_n(\tau))^{-1}}$ and $e^{t A^\lambda(\tau)}=e^{-\lambda t } e^{t
\lambda^2
(\lambda-A(\tau))^{-1}}$ (see \cite[Theorem X.47a
(Hille-Yosida)]{reed-simon-2}), we have that 
\begin{align*}
 \left \| e^{tA^\lambda (\tau)}\phi -e^{tA^\lambda_n(\tau)}\phi \right \| & = 
\left \|e^{-\lambda t } e^{t \lambda^2
(\lambda-A_n(\tau))^{-1}} \phi - e^{-\lambda t } e^{t \lambda^2
(\lambda-A(\tau))^{-1}} \phi \right \|\\
& = e^{-\lambda t} \left \|e^{t \lambda^2
(\lambda-A_n(\tau))^{-1}} \phi - e^{t \lambda^2
(\lambda-A(\tau))^{-1}} \phi \right \|.
\end{align*}
Now, we have that $\|
(\lambda-A_n(\tau))^{-1} \| \leq \frac{1}{\lambda}$ (see Proposition
\ref{Prop:ThmHilleYosida} below for $\omega=0$) and hence $\|e^{t\lambda^2
(\lambda-A_n(\tau))^{-1}} \| \leq e^{\lambda t} $.
Duhamel's identity then writes, for $0\leq t\leq T$,
\begin{eqnarray}
\lefteqn{
 \left \|e^{t \lambda^2
(\lambda-A_n(\tau))^{-1}} \phi - e^{t \lambda^2
(\lambda-A(\tau))^{-1}} \phi \right \|} \nonumber \\
 & =& \left \| \int_0^t \lambda^2 e^{(t-s) \lambda^2
(\lambda-A_n(\tau))^{-1}} \left \{
(\lambda-A_n(\tau))^{-1} - 
(\lambda-A(\tau))^{-1} \right \} e^{s \lambda^2
(\lambda-A(\tau))^{-1}} \phi \; \mathrm{d}s \right \|
\label{EQ_identite_resolvante_Lebesgues}\\
&\leq &  \lambda^2 e^{T \lambda}
 \int_0^T \left \| \left \{
(\lambda-A_n(\tau))^{-1} - 
(\lambda-A(\tau))^{-1} \right \} e^{s \lambda^2
(\lambda-A(\tau))^{-1}} \phi \right \|\; \mathrm{d}s. \nonumber 
\end{eqnarray}
The result follows from Lebesgue Dominated Convergence Theorem, using the 
convergence of  
$A_{n}(\tau)$  to $A(\tau)$ in the strong resolvent sense for
almost every $\tau \in I$ as $n$ tends to infinity. 
\end{proof}
\nuovo{
\begin{lemma}\label{Lem:AssumptionsPropositionContinuity}
Let $(A_{n})_{n\in \N}$ and $A$ 
satisfy Assumption~\ref{ass:kato-generale} with a common domain $\Dc$ (for any
$t\in I$ and any $n\in \N$). 
Let $X_n$, respectively $X$, be the contraction propagator associated with
$A_n$, respectively $A$.

Then the assumptions of Proposition~\ref{prop:continuity} are
verified whenever:
\begin{itemize}
 \item[$(i)'$] $\sup_{n\in \N} \sup_{t\in I}\|(1-A_{n}(t))^{-1}\|_{L(\Hc,\Dc)} < + \infty$,
 \item[$(ii)'$] $A_{n}(\tau)$ converges to $A(\tau)$ in the strong sense in $\Dc$ for almost every $\tau \in I$ as $n \to \infty$,
 \item[$(iii)'$] $\sup_{n\in\N}
 \TV{A_n}{I}{L(\Dc,\Hc)} < + \infty$. 
\end{itemize}
\end{lemma}
\begin{proof}
Assumptions $(i)$ and $(iii)$ of Proposition~\ref{prop:continuity} coincide, respectively, with assumptions $(i)'$ and $(iii)'$.

We have for any $\phi$ in $\Hc$
\[
 (1-A_n(t))^{-1}\phi-(1-A(t))^{-1}\phi=(1-A_n(t))^{-1}(A(t)-A_n(t))(1-A(t))^{-1}\phi
\]
and hence 
\[
 \left\|
 (1-A_n(t))^{-1}\phi-(1-A(t))^{-1}\phi
 \right\|\leq \left\|(A(t)-A_n(t))(1-A(t))^{-1}\phi\right\|.
\]
Since $(1-A(t))^{-1}\phi\in \Dc$ by assumption $(ii)'$ we conclude that $(ii)$ of Proposition~\ref{prop:continuity} is verified as well.


As $\Dc$ is dense for every $\phi \in \Hc$, $\delta >0$, there exists $\psi \in \Dc$ with $\|\phi - \psi\|<\delta$.
Since for any $a\in I$ $A_n(a)$ converges strongly to $A(a)$ in $\Dc$, $\sup_{n\in \N}\|A_{n}(a) \psi\| < +\infty$.
This is assumption $(iv)$ of Proposition~\ref{prop:continuity} 
\end{proof}
}
\begin{corollary}\label{cor:cont_input_output_bv}
Let $(A,B,K)$ satisfy Assumption~\ref{ass:ass}. Let $(u_{n})_{n\in N} $ be a sequence in $BV(I,K)$  converging to $u \in BV(I,K)$. 
Let ${A}_{n}(t) = A+u_n(t)B$, ${A}(t) =
A+u(t)B$ and
let $X_n$, respectively $X$, be the contraction propagators associated with
${A}_n$, respectively ${A}$. If $\overline{\cup_{n \in \N} u_n([0,T])} \subset K$,
then $X_{n}(t,s)$ tends strongly to $X(t,s)$ locally uniformly in $(s,t)\in \Delta_I$.
\end{corollary}

\begin{proof}

The proof consists in verifying that the hypotheses of Proposition \ref{prop:continuity} are satisfied. To this aim, we just have to check items $(i)'$, $(ii)'$ and $(iii)'$ of  Lemma~\ref{Lem:AssumptionsPropositionContinuity}.

Assumption $(i)'$:    the mapping $L: s\in K \mapsto \|(1-A)(1-A-sB)^{-1}\|$ has been defined in the proof of Lemma \ref{Rem:NonInteractingCase}
where it is shown to be continuous. By hypothesis, there exists a compact set $K_1 \subset K$ such that for every $n$ in $\N$ and every $t$ in $[0,T]$, $u_n(t) \in K_1$. Hence, 
$\sup_{n \in \N} \sup_{y \in [0,T]} \|(1-A)(1-A-u_n(t)B)^{-1}\| \leq C(K_1) <+\infty$ which proves point $(i)'$.

Assumption $(ii)'$ follows from the assumption that $(u_n)_{n\in \N}$ converges to $u$.

Assumption $(iii)'$: for every $n$ in $\N$ ,
$$
\TV{A_n}{I}{L(\Dc,\Hc)}  =\TV{u_n B}{I}{L(\Dc,\Hc)} = \|B\|_{L(\Dc,\Hc)} \TV{u_n}{I}{\R}. 
$$
This last quantity is bounded as $n$ tends to infinity since $(u_n)_{n\in \N}$ converges to $u$.  
\end{proof}

\begin{corollary}\label{COR_obstruction_contr_exacte_BV}
Assume that $(A,B,K)$ satisfy Assumption~\ref{ass:ass}.
Let $\psi_{0} \in \Hc$.  Then
$$
\left\{ \pro^{u}_{t}(\psi_{0})\mid u\in
BV([0,\infty),K), 
t\geq 0\right\}
$$
is contained in a countable union of
compact subsets of $\Hc$.
\end{corollary}
\begin{proof}
We follow the principle presented in Section \ref{SEC_principe_obstruction}. 
We first introduce a nondecreasing sequence $(K_i)_{i \in \N}$ of compact subsets of $K$ such that  $K=\cup_{i\in \N} K_i$, and the subsets 
$$\mathcal{Z}_{i,j,n}=\left\{u\in
BV([0,\infty),K_i), \TV{u}{[0,n]}{K_i} \leq j \right\}$$
of the set of functions with bounded variations. By Helly's selection Theorem, $\mathcal{Z}_{i,j,n}$ is sequentially compact.
 By Corollary \ref{cor:cont_input_output_bv}, the set 
$\{\pro^{u}_{t}(\psi_{0})\mid u \in \mathcal{Z}_{i,j,n}
\}$ is compact in $\Hc$ for every $(n,i,j)$ in $\N^3$. Hence
\begin{eqnarray*}
\lefteqn{\left\{ \pro^{u}_{t}(\psi_{0})\mid u\in
BV([0,\infty),K), 
t\geq 0\right\} \subset}\\
&& \cup_{n \in \N} \cup_{i \in \N} 
\cup_{j \in \N}
\{\pro^{u}_{t}(\psi_{0})\mid u \in \mathcal{Z}_{i,j,n}, 0\leq t \leq n\}
\end{eqnarray*} is contained is a countable union of compact sets of $\Hc$.
\end{proof}

The notion of convergence for a sequence of Radon measures is detailed in Appendix~\ref{sec:notations}.

\begin{corollary}\label{cor:continuity}
Let $(A,B,K)$ satisfy Assumption~\ref{ass:Interacting}. 
Let $I=[0,T]$ for some $T>0$. 
Let $(v_{n})_{n\in N} $ be a sequence in $\radon{I}$ converging to $v \in \radon{I}$. 
Assume that $v_n((0,t])\in K$  and $v((0,t])\in K$ for every $t\in(0,T]$ and $n \in \N$.
Let $\mathcal{A}_{n}(t) = e^{-v_{n}((0,t])B}Ae^{v_{n}((0,t])B}$ and $\mathcal{A}(t) =
e^{-v((0,t])B}Ae^{v((0,t])B}$ and
let $X_n$, respectively $X$, be the contraction propagators associated with
$\mathcal{A}_n$, respectively $\mathcal{A}$.
If $\sup_{n\in \N}
\TV{\mathcal{A}_{n}}{I}{L(D(A),\Hc)}  < + \infty$,
then $X_{n}(t,s)$ tends strongly to $X(t,s)$ locally uniformly in $(s,t) \in \Delta_I$.
\end{corollary}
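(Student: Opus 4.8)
The plan is to read the statement as a direct application of Proposition~\ref{prop:continuity} to the generators $A_n:=\mathcal{A}_n$ converging to $A:=\mathcal{A}$. By \ref{ASS_calA_bounded_variation} every $\mathcal{A}_n(t)$ and $\mathcal{A}(t)$ is maximal dissipative with the single common domain $D(A)$, so that $\Dc_n=\Dc=D(A)$ and it remains only to verify the four hypotheses of Proposition~\ref{prop:continuity}. Two of them are immediate: the uniform bound $\sup_n\|\mathcal{A}_n\|_{BV(I,L(D(A),\Hc))}<\infty$ is exactly the hypothesis of the corollary, and since $v_n(\{0\})=0$ one has $\mathcal{A}_n(a)=A$ at $a=\inf I$, whence $\sup_n\|\mathcal{A}_n(a)\|_{L(D(A),\Hc)}=\|A\|_{L(D(A),\Hc)}<\infty$. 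The bounded variation of the limit $\mathcal{A}$ and the resolvent bound $\sup_t\|(1-\mathcal{A}(t))^{-1}\|_{L(\Hc,D(A))}<\infty$ are furnished by \ref{ASS_calA_bounded_variation} applied to the measure $v$ itself, which satisfies Assumption~\ref{ass:Interacting}.

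What remains are the uniform resolvent bound and the almost everywhere strong resolvent convergence, and the hard part will be the uniform bound, precisely because Assumption~\ref{ass:Interacting} guarantees a resolvent bound for one measure at a time, not uniformly along a sequence. Here I would exploit the conjugation structure: since $e^{sB}$ leaves $D(A)$ invariant (item \ref{ASS_B_skew_symmetric}) and $\{e^{sB}\}_{s\in\R}$ is a group,
\[ (1-\mathcal{A}_n(t))^{-1}=e^{-v_n((0,t])B}\,(1-A)^{-1}\,e^{v_n((0,t])B}, \]
so this operator depends on $(n,t)$ only through the scalar $s=v_n((0,t])$. As the total variations are uniformly bounded by $R:=\sup_n|v_n|(I)<\infty$ and $v_n(\{0\})=0$, all these scalars lie in the fixed compact set $\overline{K\cap[-R,R]}$. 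The plan is then to bound the single-variable map $s\mapsto e^{-sB}(1-A)^{-1}e^{sB}\in L(\Hc,D(A))$ over that compact set, which I would do by factoring it as $\Hc\xrightarrow{e^{sB}}\Hc\xrightarrow{(1-A)^{-1}}D(A)\xrightarrow{e^{-sB}}D(A)$ and estimating
\[ \big\|e^{-sB}(1-A)^{-1}e^{sB}\big\|_{L(\Hc,D(A))}\le \|e^{-sB}\|_{L(D(A))}\,\|(1-A)^{-1}\|_{L(\Hc,D(A))}\,\|e^{sB}\|_{L(\Hc)}. \]
The only delicate factor is $\|e^{-sB}\|_{L(D(A))}$: the invariance in \ref{ASS_B_skew_symmetric} together with the closed graph theorem makes each $e^{sB}$ bounded on $(D(A),\|\cdot\|_{D(A)})$, and the fact that $B$ restricts to a locally bounded $C_0$-group there, which underlies the very definition of $\mathcal{A}(\cdot)$ in Assumption~\ref{ass:Interacting}, gives $\sup_{|s|\le R}\|e^{sB}\|_{L(D(A))}<\infty$. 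Taking the supremum over the compact set then yields hypothesis (i) uniformly in $n$ and $t$.

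For the almost everywhere strong resolvent convergence I would use the same identity at the level of resolvents together with the hypothesis $v_n((0,\tau])\to v((0,\tau])$ for almost every $\tau$: for fixed $\phi\in\Hc$ and $\lambda>0$,
\[ (\lambda-\mathcal{A}_n(\tau))^{-1}\phi=e^{-v_n((0,\tau])B}(\lambda-A)^{-1}e^{v_n((0,\tau])B}\phi \longrightarrow e^{-v((0,\tau])B}(\lambda-A)^{-1}e^{v((0,\tau])B}\phi=(\lambda-\mathcal{A}(\tau))^{-1}\phi, \]
the convergence following from the strong continuity of the $C_0$-group $\{e^{sB}\}$ on $\Hc$, the boundedness of $(\lambda-A)^{-1}$, and the uniform bound $\sup_{|s|\le R}\|e^{sB}\|_{L(\Hc)}<\infty$; one splits the difference into a term in which only the inner vector varies and a term in which only the outer operator varies, and controls each. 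This is hypothesis (ii). With all four hypotheses in hand, Proposition~\ref{prop:continuity} gives that $X_n(t,s)$ converges strongly to $X(t,s)$ locally uniformly in $(s,t)\in I$, and since $I$ is bounded this convergence is in fact uniform on $I^2$.
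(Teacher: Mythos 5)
Your global strategy is the same as the paper's: verify the four hypotheses of Proposition~\ref{prop:continuity} with $\Dc_n=\Dc=D(A)$, and your treatment of hypotheses (ii), (iii) and (iv) is correct and essentially identical to the paper's (the two-term versus three-term splitting of the resolvent difference is immaterial). The gap is exactly at the step you yourself single out as the hard one, hypothesis (i). You reduce it to the bound $\sup_{|s|\leq R}\|e^{sB}\|_{L(D(A))}<+\infty$ and then assert this holds because ``$B$ restricts to a locally bounded $C_0$-group on $D(A)$, which underlies the very definition of $\mathcal{A}(\cdot)$ in Assumption~\ref{ass:Interacting}.'' That is not what the assumption says. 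Item~\ref{ASS_B_skew_symmetric} only requires the semigroups generated by $B-c$ and $-B-c'$ on $\Hc$ to leave $D(A)$ invariant; invariance plus the closed graph theorem gives boundedness of each \emph{fixed} $e^{sB}$ in $L(D(A))$, but the restriction of a $C_0$-semigroup to an invariant, continuously embedded subspace carrying a strictly stronger norm need not be locally bounded (nor strongly continuous) in that norm --- this is precisely the pathology the assumption does not exclude, and it is why the paper, when it genuinely needs a semigroup on the smaller space, assumes it explicitly (compare Definition~\ref{Def:WeakCoupling}(iii) and Remark~\ref{rem:DomainB}). Worse, your target bound is essentially \emph{equivalent} to hypothesis (i): from $(1-\mathcal{A}_n(t))^{-1}=e^{-sB}(1-A)^{-1}e^{sB}$, evaluating on vectors of the form $e^{-sB}(1-A)\psi$ shows that the uniform resolvent bound in $L(\Hc,D(A))$ and the uniform bound on $\|e^{sB}\|_{L(D(A))}$ are interchangeable up to factors $e^{\omega|s|}$. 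So the reduction has not gained anything; the decisive estimate is asserted rather than proved.

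There are two ways to close the gap. The paper's way avoids bounding $e^{sB}$ on $D(A)$ altogether: conjugating so that the unbounded factors combine, one writes $(1-A)(1-\mathcal{A}_{n}(t))^{-1}$ as $e^{\pm v_n((0,t])B}\bigl[(1-\mathcal{A}_{n}(t))(1-A)^{-1}\bigr]e^{\mp v_n((0,t])B}$ (up to the paper's sign conventions), bounds the middle factor in $L(\Hc)$ by a constant times $\|\mathcal{A}_{n}(t)-\mathcal{A}_{n}(0)\|_{L(D(A),\Hc)}+\|1-A\|_{L(D(A),\Hc)}\leq \|\mathcal{A}_{n}\|_{BV(I,L(D(A),\Hc))}+1$ (using $\mathcal{A}_n(0)=A$, i.e.\ $v_n(\{0\})=0$), and bounds the outer factors only in $L(\Hc)$, by $e^{\omega|v_n|(I)}\leq e^{\omega R}$, which item~\ref{ASS_B_skew_symmetric} does provide. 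Note that in this argument the corollary's hypothesis $\sup_n\|\mathcal{A}_n\|_{BV(I,L(D(A),\Hc))}<+\infty$ is what drives hypothesis (i), whereas in your proof it is used only for hypothesis (iii). Alternatively, your claim can be rescued from item~\ref{ASS_calA_bounded_variation}: apply it to a single auxiliary measure whose cumulative function sweeps through the whole compact set of admissible values in $K\cap[-R,R]$; its uniform-in-$t$ resolvent bound is, by the equivalence noted above, exactly the missing bound on $\|e^{sB}\|_{L(D(A))}$ over that set. Either way, a genuine argument is required where your proof currently has an appeal to a fact the hypotheses do not state.
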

\begin{proof}
The proof consists in checking that the assumptions of 
Proposition~\ref{prop:continuity} are fulfilled. Here $\Dc = D(A)$.
\begin{itemize}
\item[$(i)$] We have $\sup_{n\in N} \sup_{t\in
I}\|(1-\mathcal{A}_{n}(t))^{-1}\|_{L(\Hc,\Dc)} <\infty$. Indeed 
\begin{align}
\lefteqn{\| (1-A) (1-\mathcal{A}_{n}(t))^{-1}\|_{L(\Hc)}}\nonumber\\& =\| (1-A) 
e^{
v_{n}((0,t])B}(1-A)^{-1}e^{ -
v_{n}((0,t])B}\|_{L(\Hc)}\nonumber\\
&\leq \|e^{ 
v_{n}((0,t])B}\|_{L(\Hc)}\| e^{-v_{n}((0,t])B} (1-A)  e^{
v_{n}((0,t])B}(1-A)^{-1}\|_{L(\Hc)}\|e^{ -
v_{n}((0,t])B}\|_{L(\Hc)}\nonumber\\
& = \|e^{ 
v_{n}((0,t])B}\|_{L(\Hc)}\| (1-\mathcal{A}_{n}(t))
(1-A)^{-1}\|_{L(\Hc)}\|e^{ -
v_{n}((0,t])B}\|_{L(\Hc)} \nonumber\\
& \leq\|e^{ 
v_{n}((0,t])B}\|_{L(\Hc)}\left(\|\mathcal{A}_{n}(t) -
\mathcal{A}_{n}(0)\|_{L(\Dc,\Hc)} +
\|1-A\|_{L(\Dc,\Hc)}\right) \|e^{ -
v_{n}((0,t])B}\|_{L(\Hc)}\nonumber\\
& \leq \|e^{ 
v_{n}((0,t])B}\|_{L(\Hc)}\left(\TV{\mathcal{A}_{n}}{I}{L(\Dc,\Hc)} +
1\right)\|e^{ -
v_{n}((0,t])B}\|_{L(\Hc)}\label{Eq:EstimateOnM}.
\end{align}
Notice that since $(v_{n})_{n\in N}$ converges to $v$ then by definition
$v_{n}((0,t])$ is uniformly bounded in $n\in \N$ and $t\in [0,T]$.
Then from Assumption~\ref{ASS_B_skew_symmetric}, there exists 
$\omega \in \R$ such that
\begin{equation}\label{eq:EwpGrowthB} 
\|e^{ vB}\|_{L(\Hc)} \leq e^{\omega |v|}, \quad \mbox{ for every } v\in \R,
\end{equation}
which provides the desired boundedness.

 \item[$(ii)$] The sequence $\mathcal{A}_{n}(t)$ tends to $\mathcal{A}(t)$ in the 
strong resolvent sense for all $t \in [0,T]$ as $n \to \infty$.
Indeed from
\[
(1-\mathcal{A}_{n}(t))^{-1} - (1-\mathcal{A}(t))^{-1}=
e^{-v_n((0,t])B}(1-A)^{-1}e^{v_n((0,t])B} - e^{-v((0,t])B}(1-A)^{-1}e^{-v((0,t])B}
\]
we have
\begin{align*}
 (1-\mathcal{A}_{n}(t))^{-1} - (1-\mathcal{A}(t))^{-1}&=
(e^{-v_n((0,t])B}-e^{-v((0,t])B})(1-A)^{-1}e^{v_n((0,t])B}\\
&\quad +e^{-v((0,t])B}(1-A)^{-1}(e^{v_n((0,t])B}-e^{v((0,t])B})
\end{align*}
then using \eqref{eq:EwpGrowthB}
the boundedness of the sequence $(v_n)$ and the strong continuity of 
$t\in \R \mapsto e^{tB}$, we conclude the strong resolvent convergence.

\item[$(iii)$] By Assumption~\ref{ASS_calA_bounded_variation} we have $\sup_{n\in 
\N}\TV{\mathcal{A}_{n}}{I}{L(D(A),\Hc)} < + \infty$.
 \item[$(iv)$] Assumption $(iv)$ of Proposition~\ref{prop:continuity} follows from ${\mathcal A}_{n}(0)=A$ and the fact that
 the domain $\Dc$ of $A$ is dense in $\Hc$.\qedhere
\end{itemize}
\end{proof}

\begin{remark}\label{Rem:CommAssumption}
The last assumption of Corollary~\ref{cor:continuity}, namely $\sup_{n\in \N}
\TV{\mathcal{A}_{n}}{I}{L(\Dc,\Hc)} < + \infty$
for
$\mathcal{A}_{n}(t) = e^{-u_{n}((0,t])B}Ae^{u_{n}((0,t])B}$,
is a consequence of 
Assumption~\ref{ass:commutator} since this provides the existence of a real constant $L_I(A,B)$ such that 
for every $s,t \in I$,
\begin{equation}\label{eq:comm}
\|e^{-tB}Ae^{tB} -  e^{-sB}A e^{sB}\|_{L(\Dc,\Hc)} \leq  L_I(A,B) |t-s|.
\end{equation}

Notice also that with $s=0$ inequality~\eqref{eq:comm} reads
\begin{equation}\label{eq:comm2}
\|e^{-tB}Ae^{tB}\|_{L(\Dc,\Hc)} \leq  L_I(A,B) |t| + 1
\end{equation}
as $\|A \|_{L(\Dc,\Hc)} \leq 1$.
\end{remark}

\section{Interaction framework}\label{Sec:Interaction}

In this section we consider the framework of assumptions~\ref{ass:Interacting} or \ref{ass:StrongInteraction}. We show that these assumptions lead to a notion of weak solution for \eqref{eq:main} when the control is integrable and we provide the proofs of Theorem~\ref{THE_main_introduction} and Proposition~\ref{PRO_introduction_BMS_Bborne} in the general Radon measure case.

\nuovo{
\subsection{Heuristic} 
A classical method to deal with time-depending Hamiltonians, as it is the case of bilinear dynamics of the form~\eqref{EQ_bilinear_abstrait}, 
is to considered solution in the mild sense
$$
x(t) = e^{tA}x_0+\int_0^t e^{(t-s)A} u(s) B x(s) \mathrm{d}s, 
$$
using the well-known \emph{interaction picture}, consisting in a change of variable $y(t)=e^{-tA}x(t)$ which gives (formally) $y'(t)=u(t) e^{-tA} B e^{tA}y(t)$. The underlying idea is that the operator $uB$ is ``small'' with respect to $A$ and, hence, $y$ is expected to have slow variations. However, in our framework, since we consider control $u$ in the set of Radon measures, we need a different approach. Indeed when $u$ has atoms, the effect of $A$ on the dynamics is negligible compared to $uB$. So we consider the change of variables $y(t)=e^{-\int u(t) B} x(t)$, and hence (formally) $y'= e^{-\int u B} A e^{\int u B}y$, to finally obtain mild solutions of~\eqref{EQ_bilinear_abstrait} in a generalized mild sense
\begin{equation}\label{EQ_Duhamel_generalisee_formelle}
x(t)=e^{\int_0^t u B }x_0 + \int_0^t e^{\int_s^t u B} A x(s) \mathrm{d}s.
\end{equation}
 This heuristic is purely formal at this point  and the aim of the rest of this section is to formalize it rigorously. In the formal expression~\eqref{EQ_Duhamel_generalisee_formelle}, one of the difficulties lies in the interpretation of the term $\int_0^t u(s)\mathrm{d}s$ when $u$ is a Radon measure and $u(\{t\})\neq 0$: should the limits of $[0,t]$ be included in the integral? This is also the reason for the introduction of a non-standard notion of sequential convergence on the set of Radon measures, needed in order to ensure the continuity of the endpoint mapping.
}

\subsection{Generalized propagators}\label{sec:radon}
In this section, we explain the link between Assumption~\ref{ass:ass} and 
Assumption~\ref{ass:Interacting} and thus emphasize the fact that~\eqref{eq:main} admits
solutions associated with a Radon measure $u$.

We use the following result of approximation of Radon measures by piecewise constant functions.

\begin{lemma}\label{LEM_approximation_measure}
For every $u \in \radon{[0,T]}$  there exists a sequence $(u_n)_n$ 
of piecewise constant functions such that $\int_0^t u_n$ tends to
$u((0,t])$ and $\int_0^t |u_n|$ tends to
$|u|((0,t])$ for all 
$t$ in $[0,T]$ as $n$ tends to infinity with $\int_0^T |u_n|\leq
|u|((0,T])$ for every $n$. 
If $u$ is positive, the sequence  $(u_n)_n$ can be chosen such that $t \mapsto \int_0^t u_n(\tau) \mathrm{d}\tau$ is nondecreasing for every $n$.
If $t\mapsto u((0,t])$ is $M$-Lipschitz continuous on $[0,T]$ then $(u_n)_n$ can be chosen such that $|u_n|\leq M$.

\end{lemma}
\begin{proof}
It is not restrictive to prove the
statement for nonnegative Radon measures since by Hahn--Jordan decomposition, any
Radon measure $u$ is the difference of two nonnegative Radon measures with disjoint supports.

Let us assume $u$ nonnegative. Then $U:t\in (0,T] \mapsto u((0,t])$ is an 
nondecreasing function (with bounded variation). Except on an at most countable set, $U$ is 
continuous. So $U$ is the sum of an nondecreasing step function, possibly with an infinite number of steps, and a nondecreasing continuous function. 
Both can be pointwise approximated by  nondecreasing sequences
of nondecreasing continuous piecewise affine functions.

The last statement follows by considering approximation of Lipschitz continuous functions by continuous piecewise affine ones. 
\end{proof}

\begin{remark}\label{Rem:NonUniformDensity}
Lemma~\ref{LEM_approximation_measure} explains the reason for the choice of the notion of convergence for sequences on Radon measure, given in Appendix~\ref{sec:notations}, in stead of the maybe more natural total variation topology.
Indeed, for a positive $u \in \radon{[0,T]}$ the sequence $(u_n)_n$ 
of piecewise constant functions such that $\int_0^t u_n$ tends to
$u((0,t])$ pointwise is, in our construction, a nondecreasing sequence.
Since each $t\mapsto \int_0^t u_n$ is continuous, if $t\mapsto u((0,t])$ is not continuous then
a result such as Lemma~\ref{LEM_approximation_measure} in the total variation topology is excluded. Indeed the convergence in total variation of sequences of measures  implies the uniform convergence of the corresponding sequence of cumulative functions.
\end{remark}

\begin{definition}\label{def:propagator-radon}
Let $(A,B,K)$ satisfy Assumption~\ref{ass:Interacting}. Let 
$u\in \radon{[0,T]}$. For any $v\in  BV([0,T],K)$ with distributional derivative $u$. 
Let
$t\mapsto 
Y^{v}_{t}$ be the contraction propagator 
with initial time $s=0$ associated with ${\mathcal A}_v(t):=e^{-v(t)B} A
e^{v(t)B}$. We define the \emph{generalized propagator} associated 
with $A+u(t)B$ with initial time zero, to be 
$\Upsilon^{\mathfrak{d}v}_{t,0}=e^{v(t)B} Y^{v}_{t}$ for every $t$ in $[0,T]$ and  $v$ in 
$BV([0,T],K)$ such that $v'=u$ in the distributional sense.
\end{definition}

\begin{proposition}
Let $(A,B,K)$ satisfy Assumption~\ref{ass:Interacting} \nuovo{with $A$ and $B$ skew-adjoint. Let 
$v \in BV([0,T],K)$ continuous}. Then, for every $\psi_0$ in $D(A)$, for every $t$ in $[0,T]$,
\[ 
\Upsilon^{\mathfrak{d}v}_{t,0} \psi_0=e^{v(t) B} \psi_0 + \int_{s=0}^t e^{(v(t)-v(s))B} A \Upsilon^{\mathfrak{d}v}_{s,0} \psi_0\mathrm{d}s.
\]
\end{proposition}
\begin{proof}
Since  $\psi_0$ belongs to $D(A)$, Theorem~\ref{thm:kato} guarantees that $t\mapsto Y^v_t \psi_0$ is a strong solution
of $y'(t)= e^{-v(t)B} A e^{v(t)B} y(t)$. That is, for every $t$,
\[ Y^v_t \psi_0 =\psi_0+\int_{s=0}^t e^{-v(s)B} A e^{v(s)B} Y^v_t \psi_0 \mathrm{d}s,\]
hence,  multiplying by $e^{v(t)B}$, 
 \[
 \Upsilon^{\mathfrak{d}v}_{t,0} \psi_0= e^{v(t) B} \psi_0 + \int_{s=0}^t \!\!\! e^{(v(t)-v(s))B} A e^{v(s)B} Y^v_t \psi_0 \mathrm{d}s =e^{v(t) B} \psi_0 + \int_{s=0}^t e^{(v(t)-v(s))B} A \Upsilon^{\mathfrak{d}v}_{s,0} \psi_0 \mathrm{d}s,
 \]
 which concludes the proof.
\end{proof}

\begin{remark}
Let $u\in \radon{[0,T]}$ and define $v_0(t)=u((0,t])$ the associated right-continuous 
cumulative function and let $v\in BV([0,T],\R)$ be such that $v'=u$. Then $v-v_0$ is in 
$BV([0,T],\R)$ and it is almost everywhere $0$ since it is supported on the, at most countable, set 
where $v$ is not right-continuous. 

The propagator $Y^{u}_{t}$ does not depend on the choice of $v$ being right-continuous 
or not at its discontinuities. Indeed the set of point of discontinuity is negligible and a Duhamel formula
 provides the equality of the propagators. On the other hand, the factor $e^{vB}$ depends 
 crucially on this choice.
This explains the notation $\Upsilon^{\mathfrak{d}v}$ instead of $\Upsilon^{u}$.

The reason for introducing the notion of generalized propagator is that imposing any extra requirement on the choice of $v$ will 
lead to loose the compactness provided by Helly's Selection Theorem. This will, for instance, make the presentation 
of the principle exposed in Section~\ref{SEC_principe_obstruction} more complicate.

Notice that for any $v_1$ and $v_2$ in $BV([0,T],K)$ with the same distributional derivative one has
$$
\Upsilon^{\mathfrak{d}v_1} = e^{(v_1-v_2)B} \Upsilon^{\mathfrak{d}v_2}.
$$
\end{remark}

\begin{lemma}\label{lem:propagatorBV}
Let $(A,B,K_1)$ satisfy Assumption~\ref{ass:ass} 
and $(A,B,K_2)$ satisfy Assumption~\ref{ass:Interacting}.
\nuovo{Let $u:[0,T]\mapsto K_1$ be of bounded total variation and
$U(t):= \int_0^t u(s)\mathrm{d}s \in K_2$ 
for all $t$ in $[0,T]$.}

Let $\pro^u_{t}$ be the propagator associated with $A+u(t)B$
with initial time $s=0$. 
Let $t\mapsto Y^{u}_{t}$ be the contraction propagator  associated with ${\mathcal A}(t):=e^{-U(t)B} A
e^{U(t)B}$ 
with initial time $s=0$.

Then 
$$
\pro^u_{t} = e^{U(t)B}Y^{u(t)} \left( = \pro^{\mathfrak{d}U(t)}_t  \right),
$$
for every $t\in [0,T]$.
\end{lemma}

\begin{proof}
Let $\psi_0 \in D(A)$ and define the continuous function
$\Psi :t \mapsto e^{-\int_0^t u(s)\mathrm{d}s B} \pro^{u}_{t}(\psi_0)$. 
By Theorem~\ref{thm:kato}, $\Psi(t)\in D(A)$ is strongly right
differentiable in $t$
 with 
\nuovo{right} derivative 
 $$
- u(t+0)B\Psi(t) +  
e^{-U(t) B} (A+u(t+0)B) \pro^{u}_{t}(\psi_0) =
e^{-U(t) B} A e^{U(t) B}
\Psi(t).
$$
By uniqueness, see Theorem~\ref{thm:kato}, $\Psi(t) = Y^{u}_{t}\psi_0$ for every $t \in [0,T]$.
\end{proof}

\begin{proposition}\label{prop:extensionBVRadon}
Let $(A,B,[0,+\infty))$ satisfy Assumption~\ref{ass:ass} and  $(A,B,K)$ satisfy Assumption~\ref{ass:Interacting}.
Then for every $\psi_{0} \in \Hc$ and $t\in [0,T]$ the map $\pro_{t}(\psi_{0}):u\mapsto \pro^u_{t}(\psi_{0})
\in \Hc$
admits a unique continuous extension on $\{u\in\radon{[0,T]} \mid u([0,T]) \in K, u $positive$\}$ denoted by $\pro_{t}(\psi_{0})$ 
and satisfying
\begin{equation}\label{Eq:InteractionFramework}
\pro^{u}_{t} (\psi_{0}) = e^{u((0,t]) B} Y^{u}_{t}(\psi_{0}), \quad \mbox{ for every } t\in[0,T].
\end{equation}
\end{proposition}
\begin{proof}
For every $u \in \radon{[0,T]}$ positive with $u([0,T]) \in K$ let $(u_{n})_{n\in \N}$ be a sequence of (right-continuous) positive
piecewise
constant functions on $[0,T]$ such that $\int_0^T u_n \in K$  converging to $u$ and which existence is given by Lemma~\ref{LEM_approximation_measure}.

From Remark~\ref{Rem:CommAssumption} and Corollary~\ref{cor:continuity}, for 
every $\psi_{0} \in \Hc$, 
$Y^{u_{n}}_{t}(\psi_{0}) $ tends to $Y^{u}_{t}(\psi_{0}) $ as $n$  tends to
$\infty$.
We set $\pro^{u}_{t} (\psi_{0}) = e^{u((0,t]) B} Y^{u}_{t}(\psi_{0})$. Then
$\pro^{u_{n}}_{t} (\psi_{0})$ tends to $\pro^{u}_{t} (\psi_{0})$ as $n$ tends to
$\infty$. 
The uniqueness of the extension is guaranteed by Lemma~\ref{lem:propagatorBV}.
\end{proof}

\begin{remark}
 With respect to Definition~\ref{def:propagator-radon}, Proposition~\ref{prop:extensionBVRadon} fixes the choice of antiderivative of $u$ to the right-continuous one, accordingly to the arbitrary choice for the notion of convergence for Radon measures presented in Appendix~\ref{sec:notations}. Different choices would have led to different  choices for the antiderivative of $u$. Qualitatively speaking, any choice provides the same results in the sequel. 
\end{remark}

\begin{remark}
 The definition of propagator associated with positive Radon measures given in~\eqref{Eq:InteractionFramework} can be extended to signed Radon measures provided that $(A,B,\R)$ satisfies Assumption~\ref{ass:ass}. Notice, however, that if $(A,B,\R)$ satisfies Assumption~\ref{ass:ass} then $B$ is necessarily symmetric.
 
In the case in which $B$ is not symmetric the definition of
  propagator can be extended to signed Radon measures provided that $(A,B,K)$ satifies Assumption~\ref{ass:Interacting}.
The uniqueness of the continuous extension can be obtained if   $(A,B-c,[0,\infty))$ and $(A,-B-c',[0,\infty))$ satisfy Assumption~\ref{ass:ass}.
Indeed consider $u \in BV([0,T],\R)$
  and split $u$ in the difference of positive part $u^+:=\max\{u,0\}$ and negative part 
  $u^-:=\max\{-u,0\}$.  Then $A(t) = A+u^+(t)(B-c) + u^-(t)(-B-c')$ satisfies Assumption~\ref{ass:kato-generale}.
\end{remark}

\begin{proposition}\label{prop:propagator-radon2}
Let $(A,B)$ satisfy
Assumption~\ref{ass:StrongInteraction} and $D(A)\subset D(B)$.  Then for every 
$\psi_0$ in $D(A)$, for every $u \in L^1([0,T],\R)$, 
the map $t \mapsto
\pro^{u}_{t}(\psi_{0})$ 
satisfies
\begin{equation}\label{eq:weaksolutions}
\int_{[0,T]} \langle f'(t),  \pro^{u}_{t}(\psi_{0}) \rangle \mathrm{d}t =
\int_{[0,T]}\langle 
f(t),  A \pro^{u}_{t}(\psi_{0}) \rangle \mathrm{d}t+  \int_{[0,T]}\langle  f(t), B 
\pro^{u}_{t}(\psi_{0})   \rangle u(t)\mathrm{d}t\,,
\end{equation}
for every 
$f \in C_0^{1}([0,T],\Hc)$.
\end{proposition}
A mapping $t \mapsto
\pro^{u}_{t}(\psi_{0})$ satisfying~\eqref{eq:weaksolutions} is called \emph{weak solution} of~\eqref{eq:main} with initial condition $\psi_0$.
\begin{proof}
For every $u \in L^1([0,T])
$ let $(u_{n})_{n\in \N}$ be a sequence of
piecewise
constant functions on $[0,T]$ that converges to $u$ 
 in
$\radon{[0,T]}$ 
(in the sense of Appendix~\ref{sec:notations}).

For every $f \in C_{0}^{1}([0,T], \Hc)$,
$$
- \int_{[0,T]} \langle f'(t),  \pro^{u_{n}}_{t}(\psi_{0}) \rangle \mathrm{d}t =
\int_{[0,T]}\langle  f(t), A  \pro^{u_{n}}_{t}(\psi_{0}) \rangle \mathrm{d}t+ 
\int_{[0,T]}\langle  f(t), B \pro^{u_{n}}_{t}(\psi_{0})   \rangle u_{n}(t)\mathrm{d}t\,
$$
since from Theorem~\ref{thm:kato}, $Y^{u_{n}}_{t}(\psi_{0})\in D(A)$ for any 
$t\in[0,T]$.

It is then sufficient to prove the following convergences
\begin{equation}\label{eq:0001}
 \lim_{n\to \infty} \int_{[0,T]} \langle f'(t),  \pro^{u_{n}}_{t}(\psi_{0}) \rangle \mathrm{d}t = \int_{[0,T]} \langle f'(t), 
\pro^{u}_{t}(\psi_{0}) \rangle \mathrm{d}t,
\end{equation}
\begin{equation}\label{eq:0002}
\lim_{n\to \infty} \int_{[0,T]}\langle  f(t),  A \pro^{u_{n}}_{t}(\psi_{0}) \rangle \mathrm{d}t
= \int_{[0,T]}\langle  f(t), A  \pro^{u}_{t}(\psi_{0}) \rangle \mathrm{d}t,
\end{equation}
and
\begin{equation}\label{eq:0003}
\lim_{n\to \infty} \int_{[0,T]}\langle  f(t),  B \pro^{u_{n}}_{t}(\psi_{0})   \rangle u_n(t)\mathrm{d}t =
\int_{[0,T]}\langle f(t),  B \pro^{u}_{t}(\psi_{0})   \rangle u(t)\mathrm{d}t.
\end{equation}
\nuovo{
Convergence~\eqref{eq:0001} is a consequence of Lebesgue Dominated Convergence Theorem being the integrand uniformly bounded.

We can rewrite~\eqref{eq:0002} as
$$
\lim_{n\to \infty} \int_{[0,T]}\langle  f(t),  A \left(\pro^{u_{n}}_{t}(\psi_{0})-\pro^{u}_{t}(\psi_{0}) \right) \rangle \mathrm{d}t
= 0
$$ 
Recall that the adjoint of $A$ is also maximal dissipative as soon as $A$ is maximal dissipative~\cite[Chapter 3.1]{TucsnakWeiss}.
We can then restrict
to $f \in C_{0}^{1}([0,T], D(A^\ast))$
by replacing $f$ with $\lambda(\lambda-A^\ast)^{-1}f$,
where $\lambda$ is a large positive since 
$$
t\in[0,T]\mapsto A \left(\pro^{u_{n}}_{t}(\psi_{0})-\pro^{u}_{t}(\psi_{0}) \right) \in\Hc,
$$ 
is uniformly bounded, and conclude, as in~\eqref{eq:0001}, with Lebesgue Dominated Convergence Theorem.
}

The  last convergence~\eqref{eq:0003} reads
\begin{align}\label{eq:DelicateTerm}
\lefteqn{ \int_{[0,T]}\langle  f(t),  B \pro^{u_{n}}_{t}(\psi_{0})   \rangle u_n(t)\mathrm{d}t-
\int_{[0,T]}\langle f(t),  B \pro^{u}_{t}(\psi_{0})   \rangle u(t)\mathrm{d}t}\nonumber\\
&= \int_{[0,T]}\langle  f(t),  B \pro^{u_{n}}_{t}(\psi_{0})   \rangle (u_n-u)(t)\mathrm{d}t+
\int_{[0,T]}\left(\langle  f(t),  B \pro^{u_{n}}_{t}(\psi_{0})   \rangle -\langle f(t),  B \pro^{u}_{t}(\psi_{0})   \rangle \right)u(t)\mathrm{d}t.
\end{align}
In order to prove the convergence for the second term of the right-hand side we have
$$
\lim_{n\to \infty} \int_{[0,T]}\langle  \left(B(1-A)^{-1}\right)^{\ast}f(t),   (1-A)\left(\pro^{u_{n}}_{t}(\psi_{0})-\pro^{u}_{t}(\psi_{0}) \right) \rangle \mathrm{d}t
= 0.
$$
Indeed $B(1-A)^{-1}$ is bounded, so is its adjoint. The proof is then similar to~\eqref{eq:0001} and~\eqref{eq:0002}.

Finally,
from Theorem~\ref{thm:kato} and estimates~\eqref{Eq:EstimateOnM}, \eqref{eq:EwpGrowthB}, and \eqref{eq:comm}, there exists $C>0$ and $\omega>0$ depending on $A$ and $B$ only, such that
\begin{align*}
\lefteqn{\left |\int_{[0,T]}\langle f(t), 
B \pro^{u_n}_{t}(\psi_{0})\rangle (u-u_{n})(t) \mathrm{d}t \right |}\\
&\leq
C\sup_{t\in [0,T]} \|f(t)\|\|B\|_A (1+CL_{[0,\|u\|_1]}(A,B)\|u\|_1)e^{2\omega\|u\|_1}\times\\
&\times e^{(1+CL_{[0,\|u\|_1]}(A,B)\|u\|_1)e^{2\omega\|u\|_1}CL_{[0,\|u\|_1]}(A,B)\|u\|_1}\|\psi_{0}\|_{D(A)}
\|u-u_{n}\|_1\\
&\to_{n \to \infty} 0
\end{align*}
since $f(0)=0$ and
using Lemma~\ref{LEM_approximation_measure} and $|u-u_{n}|=|u^+-u_{n}^+|+|u^--u_{n}^-|$ the sequence $(u_n)$ converges to $u$ in $L^1$-norm.
\end{proof}

\begin{remark}\label{Rem:TwoCouplingSkewAdjointness}
An interesting question would be to understand the relation between the 
 assumptions associated with the two constructions 
of propagators considered in this section. For example, on what extent 
does Assumption~\ref{ass:StrongInteraction} ensure that $A+uB$ has a maximal 
dissipative closure for $u\in \R$?

This seems to be a hard question. However in the skew-adjoint case, the 
following considerations are in place. Let $A$ and $B$ be skew-adjoint 
with $D(A) \subset D(B)$. For any $\varphi_1\in \Hc$, any $\varphi_2\in D(A)$ the map
\[
t\in K \mapsto \langle (1-\varepsilon A)^{-1}\varphi_1, e^{tB}Ae^{-tB}(1-\varepsilon A)^{-1}\varphi_2\rangle,
\]
is Lipschitz, its distributional derivative is bounded uniformly in $\varepsilon$ by
the Banach-Steinhaus theorem. So that $[A,B]\in L(D(A)\cap D(B),(D(A)\cap D(B))^\ast)$ extends to an operator such that
\[
[A,B]\in L(D(A),\Hc),
\]
(with a slight abuse of notation we denote by $[A,B]$ the extension of $[A,B]$ to $L(D(A),\Hc)$) and, similarly (using the same abuse of notation), for any $u\in \R$,
\[
[A,A+uB]\in L(D(A),\Hc).
\]
The Nelson commutator theorem, see \cite[Section X.5]{reed-simon-2}, gives that $A+uB$ is 
essentially skew-adjoint for any $u\in \R$.
\end{remark}

\begin{remark}\label{Rem:RightContinuity}
Considering Definition~\ref{Definition:Propagator}, $X(t,s)=e^{v(t)B} Y^{u}_{t,s}e^{-v(s)B}$ 
defines a propagator when $v$ is continuous, that is when $u$ has no atoms. Otherwise, we no 
longer require any continuity keeping in mind that $v_0$ the right-continuous cumulative 
function of $u$ will lead to a right-continuous propagator which is compatible with 
the requirements on the initial conditions. 

From Proposition~\ref{prop:propagator-radon2}, when $v$ is absolutely continuous, $X(t,s)=e^{v(t)B} Y^{u}_{t,s}e^{-v(s)B}$ defines a weak solution of \eqref{eq:main}. The question whether or not it is possible to extend this proposition to Radon measures is then natural. If one considers, as in Section~\ref{sec:nsrf} below, $A=0$, $B$ bounded, and $u = \delta_0$, then the solution of \eqref{eq:main} is $1+H(t)B$, where $H$ is a Heaviside function jumping at $0$. Which is different from $e^{H(t)B}$ provided by our analysis. 

Proposition~\ref{prop:propagator-radon2} can be extended to measures with singular continuous part. Indeed any Radon measure is in the sequential closure of the set of absolute continuous measures for the convergence of sequences we consider (see Appendix~\ref{sec:notations}). Notice that in Lemma~\ref{LEM_approximation_measure} the sequence is also narrow convergent. Since the propagators associated with absolute continuous and singular continuous measure are bounded continuous, the  first term in~\eqref{eq:DelicateTerm} will tend to $0$.
\end{remark}

\paragraph{Nonexistence of bounded solution propagators for unbounded control potentials in the skew-adjoint case.}
Let us consider the possible extensions of Proposition~\ref{prop:propagator-radon2} to the case of a pair of skew-adjoint operators $(A,B)$. 
We exhibit here an example of system~\eqref{eq:main} with a Radon measure control for which it is not possible to construct a strong solution by applying a bounded propagator to the initial condition (even if this is in the domain of the generator).

Let $\psi_0 \in D(A)$ and $\psi_1\in D(B)$ with $B\psi_1\in D(A)$ then for any solution of \eqref{eq:main} for $u=\delta_{T/2}$ with initial condition $\psi_0$ at $t=0$ the jump at $T/2$ is exactly $B\psi(T/2)$ (after integration of \eqref{eq:main} around $T/2$). So, setting $\psi(T/2)=\psi_1$, we have
$$
 \pro^{u}_{t}(\psi_{0})=\begin{cases}
  e^{tA}\psi_0 &\mbox{ for }  t\in[0,\frac{T}{2}),\\
 \psi_1 &\mbox{ for }  t=\frac{T}{2},\\
  e^{tA}\psi_0+e^{(t-\frac{T}{2})A}B\psi_1 &\mbox{ for } t\in(\frac{T}{2},T].
 \end{cases}
$$

The determination of $\psi_1$ leads to a uniqueness issue and a modelling interpretation. A way to overcome this issue is to impose a continuity at $t=T/2$. Note that the left-continuity leads to 
\[
 \psi_1 = e^{\frac{T}{2}A}\psi_0+B\psi_1.
\]
So if $1$ is in the spectrum of $B$ then for some $\psi_0$ this is not solvable. Note that with $u=\alpha \delta_{T/2}$ the problem is the same for every $\alpha$ in the spectrum of $B$. This excludes the possibility to construct a left-continuous propagator. 

A natural requirement seems to be the right-continuity and thus $\psi_1=e^{\frac{T}{2}A}\psi_0$. Then, when $B$ is unbounded, the issue is to extend continuously the propagator from $D(A)$ to $\Hc$ as for some $\psi_0\in \Hc$, one may have that $e^{\frac{T}{2}A}\psi_0 \not\in D(B)$. 

By convexity, a linear combination of left and right continuity will lead to the same kind of contradictions.  

In conclusion, when $B$ is unbounded one cannot expect to construct bounded solutions with Radon controls in the skew-adjoint case.  In Section~\ref{Sec:BoundedPotential}, we prove that when $B$ is bounded, there exists a strongly regulated propagator defining a weak solution of \eqref{eq:main}. This propagator is not necessarily a contraction.  

Similar questions arise for ODEs. We refer for instance to \cite{Pandit_Deo}. 

Nonetheless, with absolutely continuous Radon measures, we have built proper propagators and the extension to Radon measures presented in this work has consequences in the analysis of the attainable sets as presented in the sequel.

\subsection{The attainable set}
\label{SEC_attainable_set_interaction}

The key result in the proof of Theorem~\ref{THE_main_introduction} is given by the following proposition.
\begin{proposition}\label{rem:compactness}
Let $T>0$. Let
$\psi_{0} \in \Hc$. Let
$(A,B)$ satisfy
Assumption~\ref{ass:StrongInteraction}. 
Then,
for every $L>0$,   the set 
$$
\{\pro^{u}_{t}(\psi_0)\,:\, u\in
\radon{[0,T]}, |u|({ (0,T]}) \leq L, {t\in
{ [0,T]}}\},
$$ 
is relatively compact in 
{$\Hc$}.
\end{proposition}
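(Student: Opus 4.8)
The plan is to prove relative compactness of the set
$$S := \{\alpha \pro^{u}_{t}(\psi_0) : u \in \radon{(0,T]},\ |u|((0,T]) \leq L,\ t \in [0,T],\ |\alpha| \leq 1\}$$
by exhibiting it as the continuous image of a compact set, mirroring the Ball--Marsden--Slemrod argument but with Helly's selection theorem replacing the Alaoglu-type weak compactness that fails for $L^1$. First I would reduce to the factorization $\pro^{u}_{t}(\psi_0) = e^{u((0,t])B} Y^{u}_{t}(\psi_0)$ from \eqref{Eq:InteractionFramework}, so that it suffices to control the two factors separately: the exponential prefactor and the interaction-picture propagator $Y^u_t$.

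The core step is to show that the family $\{Y^u_t(\psi_0) : |u|((0,T]) \leq L\}$ has compact closure, and this is where Helly enters. Given any sequence $(u_n)_n$ in $\radon{(0,T]}$ with $|u_n|((0,T]) \leq L$, the associated right-continuous cumulative functions $t \mapsto u_n((0,t])$ form a sequence in $BV([0,T],\R)$ with total variations uniformly bounded by $L$ and values bounded by $L$; by Helly's selection theorem (quoted in the Notations) I can extract a subsequence converging pointwise to some cumulative function, hence to a measure $u$ with $|u|((0,T]) \leq L$ in the sense of convergence in $\radon{(0,T]}$. Corollary~\ref{cor:continuity} then yields $Y^{u_n}_t(\psi_0) \to Y^u_t(\psi_0)$ locally uniformly in $t$, provided the uniform $BV$ bound $\sup_n \|\mathcal{A}_n\|_{BV(I,L(D(A),\Hc))} < +\infty$ holds; this bound follows from Assumption~\ref{ass:StrongInteraction} via the Lipschitz estimate \eqref{eq:comm} in Remark~\ref{Rem:CommAssumption}, which gives $\|\mathcal{A}_n\|_{BV} \leq L_I(A,B)\,|u_n|((0,T]) \leq L\,L_I(A,B)$. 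Handling the prefactor is easier: since $|u_n((0,t])| \leq L$, the bound \eqref{eq:EwpGrowthB} gives $\|e^{u_n((0,t])B}\| \leq e^{\omega L}$ uniformly, and the strong continuity of $v \mapsto e^{vB}$ transfers the pointwise convergence of the cumulative functions to strong convergence of the prefactors on the convergent subsequence.

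Assembling these pieces, from any sequence in $S$ (with parameters $u_n, t_n, \alpha_n$) I would first extract a subsequence along which $t_n \to t$ in the compact interval $[0,T]$, $\alpha_n \to \alpha$ in the compact disk $|\alpha| \leq 1$, and the measures converge in $\radon{(0,T]}$ as above; one must be slightly careful to combine the local-uniform-in-$t$ convergence of $Y^{u_n}$ with the convergence $t_n \to t$, but the contraction property (equi-Lipschitz in $t$ up to controlled error, via Theorem~\ref{thm:kato}) lets me pass to the limit at the moving time $t_n$. The product then converges in $\Hc$, exhibiting a convergent subsequence and proving sequential, hence relative, compactness.

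The main obstacle I anticipate is the passage to the limit at the \emph{variable} time $t_n$ simultaneously with the discontinuities that the limiting cumulative function may carry: convergence in $\radon{(0,T]}$ is only pointwise convergence of cumulative functions, and at an atom of the limit measure the prefactor $e^{u((0,t])B}$ jumps, so one cannot expect uniform convergence near such points. I would circumvent this by exploiting that the limiting cumulative function has at most countably many discontinuities and that both the propagator $Y^u$ and the prefactor are right-continuous in $t$; restricting attention to continuity points of the limit, or invoking right-continuity directly at a jump, secures convergence of the product without needing uniformity across the jump.
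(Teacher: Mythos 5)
Your strategy is in fact the paper's own: factor $\pro^{u}_{t}(\psi_0)=e^{u((0,t])B}Y^{u}_{t}(\psi_0)$, run Helly's selection theorem on the cumulative functions to extract $u_n\to u$ in $\radon{(0,T]}$, feed the uniform $BV$ bound coming from \eqref{eq:comm} into Corollary~\ref{cor:continuity} to get $Y^{u_n}_{\cdot}(\psi_0)\to Y^{u}_{\cdot}(\psi_0)$ uniformly on $[0,T]$, and then treat the prefactor. Up to there the proposal is sound; note also that $Y^{u_n}_{t_n}(\psi_0)\to Y^{u}_{t}(\psi_0)$ needs no equi-Lipschitz estimate, only the uniform convergence together with the strong continuity of $t\mapsto Y^{u}_{t}(\psi_0)$.

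The gap is in your last step, and the remedy you propose for it does not work. After extracting $t_n\to t$, $\alpha_n\to\alpha$, $u_n\to u$, the real sequence $c_n:=u_n((0,t_n])$ need not converge, so the assertion "the product then converges in $\Hc$" is false as stated: let $u=\delta_t$ be an atom at $t$, approximate it by absolutely continuous measures whose cumulative functions ramp linearly from $0$ to $1$ on $[t-\tfrac1n,t+\tfrac1n]$, and choose $t_n=t\pm\tfrac1n$ alternately; then $c_n$ alternates between $0$ and $1$, and $e^{c_nB}Y^{u_n}_{t_n}(\psi_0)$ has two distinct limit points. Restricting to continuity points of the limit, or invoking right-continuity at the jump, cannot repair this: the accumulation time $t$ is dictated by the given sequence and may perfectly well be an atom of $u$ (atoms are exactly what Radon controls are for), and right-continuity of $s\mapsto u((0,s])$ gives no information whatsoever on the behaviour of $u_n((0,t_n])$. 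The missing idea — which is how the paper concludes — is one more compactness extraction, not a continuity argument: $(c_n)_n$ is bounded by $L$, so along a further subsequence $c_n\to c$ for some real $c$, hence $e^{c_nB}\to e^{cB}$ strongly and $\alpha_n e^{c_nB}Y^{u_n}_{t_n}(\psi_0)\to\alpha e^{cB}Y^{u}_{t}(\psi_0)$. The limit is in general \emph{not} of the form $\alpha\pro^{u}_{t}(\psi_0)$, i.e.\ it may lie outside the set, but relative compactness only asks that every sequence have a subsequence converging in $\Hc$, not that the limit remain in the set; your implicit attempt to identify the limit with a value of the propagator at time $t$ is precisely what pushed you toward the unworkable right-continuity argument.
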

\begin{proof}
Consider a sequence $(u_{n})_{n\in \N} \subset \radon{[0,T]}$ such
that $ |u_{n}|((0,T])
\leq  L$ for every $n$.
By Helly's Selection Theorem the sequence $v_{n}: t \mapsto u_{n}((0,t])$,
has a subsequence pointwise converging to some $v \in BV([0,T])$,
$\|v\|\leq L$. \nuovo{We relabel this convergent subsequence by $(v_{n})_{n\in \N}$.}

From \eqref{eq:comm} we have that
$\mathcal{A}_{n}(t) = e^{-u_{n}((0,t])B}Ae^{u_{n}((0,t])B}$ 
is uniformly bounded
in $BV([0,T], L(\Dc,\Hc))$. By Corollary~\ref{cor:continuity}, 
$t \mapsto Y_{t}^{v_{n}}(\psi_0)$ converges uniformly on $[0,T]$ to $t
\mapsto Y_{t}^{v}(\psi_0)$ as $n \to \infty$.
For any sequence $(t_n)_n$, 
$(v_n((0,t_n]))_n$ is a bounded sequence. \nuovo{In particular, it has a strongly convergent subsequence and so is $(e^{v_n((0,t_n])B})_n$.}
\end{proof}

\begin{remark}\label{Rem:Compactness}
 Note that the set $\{\pro^{u}(\psi_0)\,:\, u\in
L^1( [0,T], \R) , \|u\|_{L^1} \leq L \}$ is relatively compact
in 
{$L^\infty({ [0,T]},\Hc)$}.
 However, despite the compactness of $[0,T]$, the set $\{\pro^{u}(\psi_0)\,:\,
u\in
\radon{[0,T]}, |u|((0,T]) \leq L \}$ may be not relatively compact
in {$L^\infty([0,T],\Hc)$}. Indeed, if this set were relatively compact, then
 the \emph{generalized propagator} associated with $A+u(t)B$ 
would be strongly  continuous, due to the pointwise density of 
solutions of \eqref{eq:main} which are continuous. This is not the case in general due to the factor $e^{u((0,t])B}$ in \eqref{Eq:InteractionFramework}.
\end{remark}

From the above result the attainable set is a countable union of
totally bounded sets.
\begin{corollary}\label{Cor:NoExactControllability}
Let $\psi_{0} \in \Hc$.  If $(A,B)$ satisfies
Assumption~\ref{ass:StrongInteraction} then
$$
{\mathcal Att}_{\mathcal R}(\psi_0):=\left\{ \pro^{u}_{t}(\psi_{0}), u\in
\radon{[0,+\infty)}, 
t\geq 0\right\}
$$
is contained in a countable union of
compact subsets of $\Hc$.
\end{corollary}
\begin{proof}
The attainable set can be rewritten as
$$
\bigcup_{L,T> 0}  \left\{ \pro^{u}_{t}(\psi_{0}), u\in
\radon{[0,T]}, 
|u|((0,T]) \leq L, t\in [0,T]\right\}
$$
and this union can be, in fact, restricted to $L,T$ in a countable set, for instance $\N^2$.
Then Proposition~\ref{rem:compactness} tells that  each set of the union is
relatively compact in $\Hc$ and thus with empty
interior.
\end{proof}

We are now ready to prove Theorem~\ref{THE_main_introduction}.
\begin{proof}[Proof of Theorem \ref{THE_main_introduction}] 
The well-posedness result for $L^1$ 
controls is a consequence of Proposition \ref{prop:propagator-radon2} proved for Radon 
controls. The conclusion on the attainable set for $L^1$ controls is a consequence of Corollary 
\ref{Cor:NoExactControllability} proved for Radon controls.
\end{proof}

\section{Higher order norm estimates for {mildly coupled} systems}\label{Sec:HigherOrderFEPS}

In the following we will restrict our analysis to the skew-adjoint case. 
The motivation for this assumption is twofold. On the one hand this is the case for most of the mathematical objects  appearing in quantum mechanics and, on the other hand, the restriction to 
skew-adjoint operators makes the analysis simpler. 

The aim of this section is to analyze under which conditions the solution built in the 
previous sections are smooth in the scale of $A$. This is indeed the rationale for stating assumptions~\ref{ass:ass}, 
\ref{ass:Interacting}, or \ref{ass:StrongInteraction} in $D(|A|^{k/2})$
instead of $\Hc$. Our aim is to provide a somewhat simpler 
criteria showing that the extension of assumptions on $B$ will be sufficient.  To this aim, the $A$-boundedness of $B$ as an operator acting on $D(|A|^{k/2})$ is crucial 
and it is stated in Lemma~\ref{lem:KatoRellichBoundWeakCoupling} below which is the cornerstone of 
the analysis of this section. This is especially important if we want to obtain the 
regularity of propagators in the scale of $A$ up to the order $k/2$. For lower orders, a 
simple interpolation argument provides the desired results.
The criteria will be used in a perturbative framework (Kato-Rellich type argument) and we 
will not consider 
the entire set $K$
for the values of $u$, unless we assume that the domain of powers of $A+uB$ 
are the same for any $u\in K$. We recall that in the dissipative framework in order to use
Kato--Rellich criterion $u$ has to be nonnegative when $B$ is dissipative, below we assume 
that both $B$ and $-B$ have dissipativity properties (up to a shift by a constant as in Assumption~\ref{ASS_B_skew_symmetric} or~\ref{ASS_B_skew_symmetric2}) so 
that the sign of $u$ does not play any role.

This shows that for time reversible systems,
the input-output mapping does not change the regularity with
respect to $A$ in the spirit of Section~\ref{Sec:InvarianceDomain}. Since eigenvectors belong to any $D(|A|^k)$ this shows that exact
controllability clearly relies on the regularity of $B$ in the scale of $A$.

\subsection{The mild coupling}

Given a skew-adjoint operator $A$  and  $k\in \R$, $k \geq 0$, we define
$$
\|\psi\|_{k/2} = \sqrt{ \langle  | A|^{k} \psi, \psi \rangle}.
$$

\begin{definition}[Mild coupling]\label{Def:WeakCoupling}
Let $k$ be a nonnegative real. A pair of \emph{skew-adjoint}
operators $(A,B)$
is \emph{$k$-{mildly coupled} } if 
\begin{itemize}
 \item[$(i)$] $A$ is invertible with bounded 
inverse from $D(A)$ to $\Hc$\label{Assumption:FEPSOnA},
 \item[$(ii)$] for any real $t$, $e^{tB}D(|A|^{k/2})\subset D(|A|^{k/2})$,
 \item[$(iii)$] there exists $c\geq 0$ and $c'\geq 0$ such that $B-c$ 
and $-B-c'$ generate contraction semigroups on $D(|A|^{k/2})$ for the norm
$\|\cdot\|_{k/2}$.
 \end{itemize}
\end{definition}
The \emph{optimal exponential growth}, \nuovo{the growth bound of the semigroups generated by $\pm B$}, is defined by
\begin{equation}\label{eq:ck}
 c_k(A,B):=\sup_{t\in \R}\frac{\log\|e^{tB}\|_{L(D(|A|^{k/2}),
D(|A|^{k/2})}}{|t|}.
\end{equation}

\begin{remark}
As in Section~\ref{Sec:BoundedPotential} below, if 
\begin{equation*}
t \mapsto  e^{tB}  
\end{equation*}
is a strongly continuous semigroup on $D(|A|^{k/2})$ then there
exists
$\omega>0$ and $C>0$ such that
\[
 \|e^{tB}\|_{L(D(|A|^{k/2}),D(|A|^{k/2}))}\leq C e^{\omega t}, \quad \mbox{ for all }
t>0.
\]


The invertibility of $A$ is needed to ensure that $\|\cdot\|_{k/2}$ is a norm 
equivalent to the graph norm of $D(|A|^{k/2})$. The use of the associated norm 
is due to the interpolation criterion used in Lemma~\ref{Lem:kfeps-sfeps} below. 
\end{remark}

\begin{remark}
The quantity $c_k(A,B)$ is related to the growth abscissa of $B$ in $D(|A|^{k/2})$. The link between the growth abscissa and the spectral radius of a semigroup on a Hilbert space is considered in~\cite[Section 3]{Pruss}.
\end{remark}

\begin{remark}
For many systems encountered in the physics literature, the operator $A$ is skew-adjoint 
with a spectral gap. Hence the invertibility of $A$ can be obtained by 
replacing $A$ by $A-\lambda \mathrm{i}$ for a suitable $\lambda$ in $\R$. Notice that this 
translation on $A$ only induces a global phase shift on the propagator that is physically 
irrelevant (i.e., undetectable  by observations).
\end{remark}

The following proposition gives another characterization of mild coupling
using Hille--Yosida Theorem.
\begin{proposition}\label{Prop:ThmHilleYosida}
Let $k$ be a nonnegative real. A pair of skew-adjoint operators
$(A,B)$ { with $A$ invertible}
is \emph{$k$\-{-}{mildly coupled} } if
and only if $B$ is closed in $D(|A|^{k/2})$,
and there exists $\omega$ such that 
\begin{equation}\label{eq:1223}
\|(\lambda
I- B)^{-1}\|_{L(D(|A|^{k/2}),D(|A|^{k/2})}\leq\frac{1}{|\lambda|-\omega},
\end{equation}
for every real $\lambda$, $|\lambda|>\omega$
in the resolvent set of $B$.

Moreover the smallest $\omega$ satisfying~\eqref{eq:1223} is  
$c_k(A,B)$ given by~\eqref{eq:ck}.
\end{proposition}
\begin{proof}
If $(A,B)$
be \emph{$k$-{mildly coupled} } then $B-c_k(A,B)$ is the generator of a contraction
semigroup in $D(|A|^{k/2})$. From Hille--Yosida Theorem, we deduce the  
equivalence with Definition~\ref{Def:WeakCoupling}.
\end{proof}

The following proposition gives an equivalent definition which may be easier to 
check in practice.
\begin{proposition}\label{Prop:ThmLumerPhillips}
Let $k$ be a nonnegative real. A pair of skew-adjoint operators $(A,B)$ 
with $A$ invertible is $k$-mildly coupled, 
if and only if for some $\omega>0$, 
\[
 (\omega\pm B)^{-1}D(|A|^{k/2})\subset D(|A|^{k/2})
\]
and for any $\psi \in (\omega- B)^{-1}D(|A|^{k/2})=(\omega+ B)^{-1}D(|A|^{k/2})$, one has
\begin{equation}\label{eq:2121}
| \Re \langle |A|^k\psi,B\psi\rangle | \leq \omega
\|\psi\|_{D(|A|^{k/2})}^2.
\end{equation}
Moreover the smallest $\omega$ 
satisfying~\eqref{eq:2121} is  
$c_k(A,B)$ given by~\eqref{eq:ck}.
\end{proposition}
\begin{proof}
We first notice that, for any $\omega$ in the resolvent sets of $B$ and $-B$,
\[
 (\omega\pm B)^{-1}D(|A|^{k/2})\subset D(|A|^{k/2})
\]
implies $(\omega- B)^{-1}D(|A|^{k/2})=(\omega+ B)^{-1}D(|A|^{k/2})$. Indeed, from the resolvent identity, we deduce
\[
 (\omega- B)^{-1}D(|A|^{k/2})
\subset (\omega+ B)^{-1}D(|A|^{k/2}).
\]

Assume that $(A,B)$ is $k$-{mildly coupled}, 
then since $B-c_k(A,B)$ and $-B-c_k(A,B)$ are generator of contraction semigroups
on $D(|A|^{k/2})$, 
they are closed and maximal dissipative on $D(|A|^{k/2})$, 
their respective resolvent sets contains positive half lines (by means of 
Hille--Yosida theorem) and their domain, by definition of resolvent, is
$(\omega\pm B)^{-1}D(|A|^{k/2})$ for any $\omega>c_k(A,B)$. Since they are maximal dissipative, 
we have that
\[
|\Re \langle |A|^k\psi,B\psi\rangle|  \leq c_k(A,B) \|\psi\|_{D(|A|^{k/2})}^2.
\]
for any $\psi \in (\omega- B)^{-1}D(|A|^{k/2})=(\omega+ B)^{-1}D(|A|^{k/2})$.

Reciprocally, $ B+\omega$ and $-B+\omega$ are closed as operators on 
$\Hc$ and so they are closed on $D(|A|^{k/2})$. Since $ B+\omega$ and $-B+\omega$ are 
dissipative on $D(|A|^{k/2})$, they are generators of a 
contractions semigroups if they are surjective. 
So they are since 
$(\pm B +\omega)^{-1}f \in D(|A|^{k/2})$ for any $f\in D(|A|^{k/2})$.
\end{proof}

The notion of mild coupling is related to the notion of ``weak coupling'' 
introduced in~\cite{BoussaidCaponigroChambrion}. The relation  between 
these two definitions is given by the following 
lemma.
\begin{lemma}\label{LEM_lien_WC_nouvelle_definition_ancienne_definition}
Let $(A,B)$ be a pair of linear operators such that $A$ is invertible and skew-adjoint with domain $D(A)$, $B$ is skew-symmetric with $D(A)\subset D(B)$,  $A+uB$ (seen as an operator acting on $\Hc$) is essentially skew-adjoint on $D(A)$ for every $u$ in $\R$, $D(|A+uB|^{k/2})=D(|A|^{k/2})$  for some $k\geq 1$ and for any real $u$, 
 and
 there exists
a constant $C$ such that for every $\psi$ in $D(|A|^k)$, 
\[
|\Re \langle |A|^k
\psi,B\psi \rangle |\leq C |\langle |A|^k \psi, \psi \rangle|. 
\]
Then $(A,B)$ is $k$-{mildly coupled}  and $c_k(A,B)$ is the best possible constant $C$ in the above inequality.
\end{lemma}
\begin{proof}
The assumption that there exists $k\geq 1$ and 
a constant $C$ such that for every $\psi$ in $D(|A|^k)$, 
\[
|\Re \langle |A|^k
\psi,B\psi \rangle |\leq C |\langle |A|^k \psi, \psi \rangle|
\]
and the Nelson Commutator Theorem, see \cite[Section X.5]{reed-simon-2}, imply that 
$B$  is essentially skew-adjoint  on the domain $D(|A|^{k/2})$. Therefore  $B$ is essentially 
skew-adjoint on $D(A)$. Then Trotter Product Formula, see~\cite[Theorem VIII.31]{reed-simon-1}, implies that
\[
 \left(e^{\frac{t}{n}(A+uB)}e^{-\frac{t}{n}A}\right)^n \to e^{t u B}
\]
in the strong sense as $n$ goes to infinity. Since  each of 
the term of the above  sequence is bounded on $D(|A|^{k/2})$ with a bound  
$e^{C|t||u|}$, see~\cite[Proposition 2]{BoussaidCaponigroChambrion}, we conclude 
that $e^{tB}$ is bounded on $D(|A|^{k/2})$ with the same bound $e^{C|t||u|}$. 
Then $(A,B)$ is $k$-mildly coupled.
\end{proof}
\begin{remark}
In general, $(A,B)$ can be $k$-{mildly coupled} without being weakly coupled (in the sense of~\cite[Definition~1]{BoussaidCaponigroChambrion}) or without satisfying the assumption of Lemma~\ref{LEM_lien_WC_nouvelle_definition_ancienne_definition}. Indeed for any invertible skew-adjoint unbounded operator $(A,\rmi A^2)$ is $2$-{mildly coupled} and $D(A)\not\subset D(\rmi A^2)$ or $D(A+\rmi A^2)=D(A^2)\neq D(A)$.
\end{remark}

Let us state state an interpolation result.
\begin{lemma}\label{Lem:kfeps-sfeps}
Let $k$ be a positive real. If $(A,B)$
is $k$-{mildly coupled}  then $(A,B)$
is $s$-{mildly coupled}  for any $s\in [0,k]$
and
\[
 c_s(A,B)\leq \frac{s}{k}c_k(A,B).
\]
\end{lemma}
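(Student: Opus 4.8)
The plan is to obtain the result by complex interpolation of the group $t\mapsto e^{tB}$ between the two extreme spaces $\Hc=D(|A|^{0})$ and $D(|A|^{k/2})$. Since $B$ is skew-adjoint, $e^{tB}$ is unitary on $\Hc$, so that $\|e^{tB}\|_{L(\Hc)}=1$ for every $t$ and $c_0(A,B)=0$; in particular the statement is trivial for $s=0$, and we may assume $0<s\le k$. By hypothesis $(A,B)$ is $k$-\FEPS\ , so $e^{tB}$ maps $D(|A|^{k/2})$ into itself and, by the very definition of the optimal exponential growth, $\|e^{tB}\|_{L(D(|A|^{k/2}))}\le e^{c_k(A,B)|t|}$ for all $t\in\R$. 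Because $A$ is invertible, $|A|$ is a nonnegative self-adjoint operator bounded below by a positive constant, so $\psi\mapsto\||A|^{\sigma/2}\psi\|$ is, for every $\sigma\ge0$, a norm on $D(|A|^{\sigma/2})$ equivalent to the graph norm; by the spectral theorem these spaces form a complex interpolation scale, namely $D(|A|^{s/2})=[\Hc,D(|A|^{k/2})]_{s/k}$ with equivalent norms.

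First I would apply the complex (Stein) interpolation theorem to the single operator $e^{tB}$, which is bounded on $\Hc$ with norm $1$ and on $D(|A|^{k/2})$ with norm at most $e^{c_k(A,B)|t|}$. With $\theta=s/k$ this yields boundedness on $D(|A|^{s/2})$ with
\[
\|e^{tB}\|_{L(D(|A|^{s/2}))}\le\|e^{tB}\|_{L(\Hc)}^{1-s/k}\,\|e^{tB}\|_{L(D(|A|^{k/2}))}^{s/k}\le e^{\frac{s}{k}c_k(A,B)|t|}.
\]
In particular $e^{tB}D(|A|^{s/2})\subset D(|A|^{s/2})$, which is condition (ii) of Definition~\ref{Def:WeakCoupling} at order $s$, and taking logarithms, dividing by $|t|$, and passing to the supremum over $t$ gives directly the claimed inequality $c_s(A,B)\le\frac{s}{k}c_k(A,B)$.

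It then remains to check condition (iii), which in view of the bound just obtained amounts to establishing strong continuity of $t\mapsto e^{tB}$ on $D(|A|^{s/2})$. Here I would use that $D(|A|^{k/2})$ is dense in $D(|A|^{s/2})$ (for $\psi\in D(|A|^{s/2})$ the spectral truncations $\mathbf{1}_{\{|A|\le n\}}\psi$ lie in $D(|A|^{k/2})$ and converge to $\psi$ in $\|\cdot\|_{s/2}$ by dominated convergence), together with the domination $\|\cdot\|_{s/2}\le C\|\cdot\|_{k/2}$ valid since $|A|^{(s-k)/2}$ is bounded for $s\le k$: on the dense set $D(|A|^{k/2})$ strong continuity at order $s$ follows from strong continuity at order $k$, and it extends to all of $D(|A|^{s/2})$ by the local uniform boundedness of $e^{tB}$ near $t=0$ established above, via a standard $\varepsilon/3$ argument. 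Both $e^{tB}$ and $e^{-tB}$ being then strongly continuous on $D(|A|^{s/2})$ with $\|e^{\pm tB}\|_{L(D(|A|^{s/2}))}\le e^{\frac{s}{k}c_k(A,B)t}$ for $t\ge0$, the rescaled semigroups $t\mapsto e^{t(\pm B-\frac{s}{k}c_k(A,B))}$ are contractions on $D(|A|^{s/2})$, which is precisely condition (iii) of Definition~\ref{Def:WeakCoupling} with $c=c'=\frac{s}{k}c_k(A,B)$; hence $(A,B)$ is $s$-\FEPS\ (and Proposition~\ref{Prop:ThmHilleYosida} identifies this growth bound with $c_s(A,B)$).

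The main technical point is the identification $D(|A|^{s/2})=[\Hc,D(|A|^{k/2})]_{s/k}$ and the verification that this equivalent-norm scale carries a genuine interpolation inequality for operator norms; once this is in place, both the interpolation bound and the density/strong-continuity argument are routine. The invertibility of $A$ is exactly what guarantees that $\|\cdot\|_{\sigma/2}$ is a norm rather than a mere seminorm and that the interpolation scale is the expected one, which is why it appears as a standing hypothesis in the notion of weak coupling.
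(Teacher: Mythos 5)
Your proof is correct, but it takes a genuinely different route from the paper's. The paper never invokes interpolation-space theory: it conjugates, writing $\||A|^{k/2}e^{tB}u\|=\||\mathcal{A}(t)|^{k/2}u\|$ with $\mathcal{A}(t)=e^{-tB}Ae^{tB}$, so that the $k$-weak-coupling hypothesis becomes the quadratic-form inequality $|\mathcal{A}(t)|^{k}\leq e^{2c_k(A,B)|t|}|A|^{k}$ (together with the lower bound $\|A^{-1}\|^{-k}\leq|\mathcal{A}(t)|^{k}$ coming from invertibility); it then applies its operator-monotonicity result (Proposition~\ref{Prop:Interpolation}, a Loewner--Heinz-type inequality proved in the appendix via an integral representation of fractional powers) with exponent $s/k$ to get $|\mathcal{A}(t)|^{s}\leq e^{2c_k(A,B)s|t|/k}|A|^{s}$, which unravels into exactly your bound $\|e^{tB}\|_{L(D(|A|^{s/2}))}\leq e^{\frac{s}{k}c_k(A,B)|t|}$. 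Your Stein-interpolation argument on the scale $D(|A|^{\sigma/2})$, identified through the spectral theorem with weighted $L^2$ spaces, is the natural alternative; the two tools are close cousins (operator monotonicity of $x\mapsto x^{s/k}$ and exactness of complex interpolation for weighted $L^2$ couples are essentially interchangeable here), and both hinge on the invertibility of $A$ to make $\|\cdot\|_{\sigma/2}$ a genuine norm. What the paper's route buys is self-containedness --- everything reduces to a form inequality plus its own appendix --- while your route buys exact constants with no extra work and, more importantly, an explicit treatment of clause (iii) of Definition~\ref{Def:WeakCoupling}: the density and $\varepsilon/3$ verification that $t\mapsto e^{tB}$ is strongly continuous on $D(|A|^{s/2})$, hence that $\pm B-\frac{s}{k}c_k(A,B)$ generate contraction semigroups there. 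The paper's proof stops at the norm bound and leaves that semigroup-generation step implicit, so your write-up is in fact the more complete of the two on that point.
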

\begin{proof}
We will consider $s\in(0,k)$. Indeed, for $s=k$ this is obvious and $s=0$ there is nothing to prove since $B$ is skew-adjoint by assumption.

Moreover since $B$ is skew-adjoint, for every $\psi$ in $D(|A|^\frac{k}{2})$,
\[
 \|e^{tB}\psi \|_{D(|A|^{k/2})}=\||A|^{k/2}e^{tB}\psi\|=\||{\cal A}(t)|^{k/2}\psi \|.
\]
where ${\cal A}(t)=e^{-tB}Ae^{tB}$ (which is skew-adjoint with domain $D(A)$).

Since $(A,B)$ is $k$-{mildly coupled} we deduce
\[
\frac1{\|A^{-1}\|^k}\leq |{\cal A}(t)|^k\leq e^{2c |t|} |A|^k.
\]
which from Proposition~\ref{Prop:Interpolation} in Appendix~\ref{sec:interpolation} yields
\[
 |{\cal A}(t)|^s\leq e^{2cs|t|/k} |A|^s.
\]
This concludes the proof.
\end{proof}

A corollary of this interpolation result is the following 
result which is crucial in our analysis.
It shows 
that  if $(A,B)$ is $k$-{mildly coupled} the $A$-boundedness of $B$
extends naturally to $D(|A|^{k/2})$. Hence, from now on, we will work in $D(|A|^{k/2})$, that is we  consider $\Hc=D(|A|^{k/2})$.
\begin{lemma}\label{lem:KatoRellichBoundWeakCoupling}
 Let $k$ be a nonnegative real. Let $(A,B)$ be $k$-{mildly coupled} and such that $B$ is $A$-bounded.
Then
\[
\inf_{\lambda >0}
\|B(A-\lambda)^{-1}\|_{L(D(|A|^{\frac{k}{2}}),D(|A|^{\frac{k}{2}}))}
\leq \|B\|_A
\]
\end{lemma}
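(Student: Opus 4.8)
The plan is to conjugate everything by $|A|^{k/2}$ so as to transport the problem back to the ambient space $\Hc$, then to peel off the leading contribution, which is exactly $\|B\|_A$, from a commutator remainder that the weak coupling forces to be of strictly lower order.

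First I would exploit that $|A|^{k/2}$ is a function of $A$ and therefore commutes with the resolvent $(A-\lambda)^{-1}$ for every real $\lambda$ outside the (imaginary) spectrum of the skew-adjoint operator $A$. Setting $B_k:=|A|^{k/2}B|A|^{-k/2}$, this gives for such $\lambda$
\[
\|B(A-\lambda)^{-1}\|_{L(D(|A|^{k/2}),D(|A|^{k/2}))}=\big\||A|^{k/2}B(A-\lambda)^{-1}|A|^{-k/2}\big\|_{L(\Hc)}=\|B_k(A-\lambda)^{-1}\|_{L(\Hc)}.
\]
Thus the lemma is equivalent to the statement that the $A$-bound of $B_k$ in $\Hc$ is at most $\|B\|_A$, which by the $\liminf$ description \eqref{def:Abound2} means $\liminf_{\lambda\to+\infty}\|B_k(A-\lambda)^{-1}\|_{L(\Hc)}\le\|B\|_A$.

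Next I would split $B_k=B+(B_k-B)$, where $B_k-B=[\,|A|^{k/2},B\,]\,|A|^{-k/2}$. For the first summand the hypothesis that $B$ is $A$-bounded together with \eqref{def:Abound2} yields $\lim_{\lambda\to+\infty}\|B(A-\lambda)^{-1}\|_{L(\Hc)}=\|B\|_A$. By the triangle inequality it then suffices to show that the remainder is relatively negligible, i.e. that $B_k-B$ is $A$-bounded with relative bound $0$; in particular it is enough to establish the sublemma that the fractional commutator $[\,|A|^{k/2},B\,]\,|A|^{-k/2}=B_k-B$ extends to a \emph{bounded} operator on $\Hc$, since then $\|(B_k-B)(A-\lambda)^{-1}\|_{L(\Hc)}\le\|B_k-B\|/|\lambda|\to 0$.

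The boundedness of $B_k-B$ is where the weak coupling is used, through the interpolation Lemma~\ref{Lem:kfeps-sfeps}. Differentiating at $t=0$ the two-sided comparison $e^{-2c_k(A,B)|t|}|A|^k\le|\mathcal{A}(t)|^k\le e^{2c_k(A,B)|t|}|A|^k$ with $\mathcal{A}(t)=e^{-tB}Ae^{tB}$, which follows from the identity $\||A|^{k/2}e^{tB}u\|=\||\mathcal{A}(t)|^{k/2}u\|$ and the definition of $c_k(A,B)$, produces the form bound $\pm[\,|A|^k,B\,]\le 2c_k(A,B)\,|A|^k$; equivalently $M:=|A|^{-k/2}[\,|A|^k,B\,]|A|^{-k/2}$ is a bounded self-adjoint operator. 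I would then recover $B_k-B$ from $M$ by a double operator integral (a Schur multiplier in the spectral representation of $|A|$) with symbol $\phi(\lambda,\mu)=|\lambda|^{k/2}/(|\lambda|^{k/2}+|\mu|^{k/2})$, using the elementary identity $\tfrac{x-y}{y}=\tfrac{x^2-y^2}{xy}\cdot\tfrac{x}{x+y}$ with $x=|\lambda|^{k/2}$, $y=|\mu|^{k/2}$; as $\phi$ is a bounded multiplier one gets $\|B_k-B\|\le\|\phi\|_{\mathrm{mult}}\,\|M\|<\infty$. Combined with the $A$-boundedness of $B$ this gives, for every $\varepsilon>0$, an estimate $\|B_k\chi\|\le(\|B\|_A+\varepsilon)\|A\chi\|+C_\varepsilon\|\chi\|$, and letting $\varepsilon\to0$ concludes.

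The hard part is precisely this sublemma: converting the weak-coupling \emph{form} bound on $[\,|A|^k,B\,]$ into an \emph{operator-norm} bound on the fractional commutator $[\,|A|^{k/2},B\,]|A|^{-k/2}$. This is delicate for non-integer $k$ because commutators with fractional powers do not gain a full order in a naïve way, which is exactly why the spectral/double-operator-integral representation above is needed (one must check that the symbol $\phi$ is a bounded multiplier, and iterate the reduction when $k/2\ge 1$). By contrast, the conjugation step, the extraction of the leading term $\|B\|_A$, and the passage to the infimum are routine once this boundedness is in hand.
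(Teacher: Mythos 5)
Your reduction (conjugating by $|A|^{k/2}$, splitting $B_k=B+(B_k-B)$, and extracting the leading term $\|B\|_A$ via \eqref{def:Abound2}) is sound, but the whole weight of the argument then rests on the sublemma that $B_k-B=[\,|A|^{k/2},B\,]|A|^{-k/2}$ extends to a bounded operator on $\Hc$, and your justification of that sublemma fails. The symbol $\phi(\lambda,\mu)=\lambda^{k/2}/(\lambda^{k/2}+\mu^{k/2})$ is \emph{not} a bounded Schur multiplier with respect to arbitrary spectral measures: substituting $\lambda^{k/2}=e^{s}$, $\mu^{k/2}=e^{t}$ (legitimate since $|A|$ is invertible) turns it into the Toeplitz symbol $h(t-s)$ with $h(r)=(1+e^{r})^{-1}$, and a Toeplitz symbol $m(t-s)$ is a bounded Schur multiplier if and only if $m$ is the Fourier--Stieltjes transform of a finite measure; here $h=\tfrac12-\tfrac12\tanh(\cdot/2)$, and the distributional Fourier transform of $\tanh$ is a principal-value distribution of the type $\mathrm{p.v.}\,c/\sinh$, which is not a finite measure. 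Concretely, $\phi$ is a smoothed triangular-truncation symbol: taking $n$ points $\lambda_{i}=e^{2iL/k}$ with $L$ large, the matrix $\bigl(\phi(\lambda_i,\lambda_j)\bigr)_{i,j}$ converges entrywise to the triangular projection, whose Schur multiplier norm grows like $\log n$; hence $\|\phi\|_{\mathrm{mult}}=\infty$ for every $k>0$, integer or not, and the inequality $\|B_k-B\|\leq\|\phi\|_{\mathrm{mult}}\,\|M\|$ gives nothing.

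The gap is not merely technical: the sublemma you aim at is strictly stronger than the lemma itself and does not follow from the hypotheses. Weak coupling (through Proposition~\ref{Prop:ThmLumerPhillips} and the interpolation Lemma~\ref{Lem:kfeps-sfeps}) only yields \emph{symmetrically} weighted bounds, namely boundedness of $|A|^{-s/2}[\,|A|^{s},B\,]|A|^{-s/2}$ for $s\in[0,k]$; converting these into the one-sided weighting $[\,|A|^{k/2},B\,]|A|^{-k/2}$ is precisely the step that breaks down (if $C=|A|^{1/2}D|A|^{1/2}$ with $D$ bounded, then $|A|^{-1/2}C|A|^{-1/2}$ is bounded while $C|A|^{-1}$ need not be). The paper's proof is built to avoid exactly this: it never discards the resolvent, but bootstraps the quantities $\||A|^{\alpha}B(A-\lambda)^{-1}|A|^{-\alpha}\|$ over increasing exponents $\alpha\in[0,k/2]$, using the quadratic-form expansion of $\||A|^{k/2}B\phi\|^{2}$ in which commutators only ever appear in the symmetric form $|A|^{-s/2}[\,|A|^{s},B\,]|A|^{-s/2}$, with the base case $\alpha=0$ supplied by the $A$-boundedness of $B$ and the factor $(A-\lambda)^{-1}$ absorbing the excess powers of $|A|$ as $\lambda\to\infty$. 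Any repair of your scheme must likewise keep the resolvent inside the commutator estimates rather than demand an unweighted bound on $B_k-B$.
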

\begin{proof}\nuovo{Denote by $\Hc_s=D(|A|^s)$, endowed with the graph norm,  and by $\Hc_{-s}=D(|A|^s)^\ast$,  for $s\in [0,+\infty]$. Note that,  if $s'\leq s$, $\Hc_s\subset \Hc_{s'}$ with continuous embeddings. The domain of $|A|^\sigma$, $\sigma\geq 0$, in $\Hc_s$ is $\Hc_{s+\sigma}$ and $|A|^\sigma\Hc_s=\Hc_{s-\sigma}$ as $A$ is invertible. The real interpolation $(\Hc_{s_0},\Hc_{s_1})_{\theta,2}$ is $\Hc_{s_\theta}$ with $s_\theta=\theta s_1 +(1-\theta)s_0$. See, for instance,~\cite[Section 2.8]{amrein}.}

The proof follows \cite[Section X.5]{reed-simon-2}. 
The commutator $[|A|^{k},B]=|A|^{k}B-B|A|^{k}$ is defined from $\Hc_{k+1}$ to $\Hc_{-k}$ and since 
$B$ is $k$-mildly coupled, for $\psi\in \Hc_{k+1}$, we have
\[
\left| \left\langle \psi, (|A|^{k}B-B|A|^{k})\psi\right\rangle \right|=2\left| \Re \left\langle B\psi,|A|^{k}\psi\right\rangle \right|\leq 2 c_k(A,B)\||A|^{k/2} \psi\|^2.
\]
This provides, after polarization, the boundedness of $[|A|^{k},B]$ from $\Hc_{k/2}$ to $\Hc_{-k/2}$.
\nuovo{For any $s$ in $[0,k]$ due to Lemma~\ref{Lem:kfeps-sfeps}, $B$ is $s$-mildly coupled. Hence the commutator $[|A|^{s},B]=|A|^{s}B-B|A|^{s}$ extends as a bounded operator from $\Hc_{s/2}$ to $\Hc_{-s/2}$ for any  $s$ in $[0,k]$.}

Recall $B$ is $A$-bounded, that is $B$ bounded from $\Hc_1$ to $\Hc$. As $-B^\ast$ is an extension of $B$ and is bounded from $\Hc$ to $\Hc_{-1}$, with same norm, by interpolation, $B$ is bounded from $\Hc_{1+s}$ to $\Hc_s$ for any $s\in [-1,0]$. The bound on the norm of $B$ as an operator from $\Hc_{1+s}$ to $\Hc_s$,  $s\in [-1,0]$ is thus smaller than the norm of $B$ as an operator from $\Hc_1$ to $\Hc$.
Hence, we have
\[
\inf_{\lambda >0}
\|B(A-\lambda)^{-1}\|_{L(\Hc_{s},\Hc_{s})}
\leq \|B\|_A.
\]
Therefore, for $k\in [0, 2]$, $B|A|^{k}$ is bounded from $\Hc_{k/2+1}$ to $\Hc_{-k/2}$. 
As $|A|^{k}B = B|A|^{k} + [|A|^{k},B]$, $|A|^{k}B$ extends as a bounded operator from $\Hc_{k/2+1}$ to $\Hc_{-k/2}$ and $B$ is bounded from $\Hc_{k/2+1}$ to $\Hc_{k/2}$. Hence $B$ is bounded from $\Hc_{1+s}$ to $\Hc_s$ for any $s\in [-1,k/2]$. 
As for the norm, for any $\epsilon$ positive, there exists $\Lambda_\epsilon$ such that, for any $\psi \in \Hc_{k/2+1}$, 
\begin{align*}
 \|B \psi \|_{k/2}&=\||A|^{-k/2}|A|^kB\psi\|\\
 &\leq \||A|^{-k/2}B|A|^{k}\psi\| + \||A|^{-k/2}[|A|^{k},B]\psi\|\\
 &\leq (\|B\|_A+\epsilon)\||A|^{k/2}|A+\Lambda_\epsilon|\psi\| + 2 c_k(A,B)\||A|^{k/2}\psi\|\\
  &\leq (\|B\|_A+\epsilon)\|A|^{k/2}|A+L_\epsilon|\psi\|
\end{align*}
for $L_\epsilon\geq \Lambda_\epsilon$ large enough.
Then, we deduce
\[
\inf_{\lambda >0}
\|B(A-\lambda)^{-1}\|_{L(\Hc_{k/2},\Hc_{k/2})}
\leq \|B\|_A.
\]

Hence the Lemma is proved for $k$ in $[0,2]$. Assume that $k\geq 2$, we now extend the lemma to $k$ by an induction. 
Assume that 
$B$ is bounded from $\Hc_{1+s}$ to $\Hc_s$ for any $s\in [-1,(k-2)/2]$.
So $B$ is bounded from $\Hc_{1+s}$ to $\Hc_{s}$ for any $s\in [-k/2,0]$ by duality.

We have $B|A|^{k}$ is bounded from $\Hc_{k/2+1}$ to $\Hc_{-k/2}$.  
As $|A|^{k}B = B|A|^{k} + [|A|^{k},B]$, $|A|^{k}B$ extends as a bounded operator from $\Hc_{k/2+1}$ to $\Hc_{-k/2}$, 
and $B$ is bounded from $\Hc_{k/2+1}$ to $\Hc_{k/2}$. 
Hence $B$ is bounded from $\Hc_{1+s}$ to $\Hc_s$ for any $s\in [-1,k/2]$. 
This concludes the induction since 
the norm estimate is obtained as in the initialisation.
\end{proof}

The following result shows sufficient conditions to have $D(|A+uB|^{k/2})=D(|A|^{k/2})$ as required in Lemma~\ref{LEM_lien_WC_nouvelle_definition_ancienne_definition}. 
Checking this property in practice may be a hard task in general.
Recall that as $D(A)\subset D(B)$, $A+uB$ is self-adjoint
 with $D(A+uB)=D(A)$ for sufficiently small $u$  by
 Kato--Rellich theorem.
\begin{lemma}\label{lem_domaines_egaux}\label{lem:NormesEquivalentes}
 Let $k$ be a positive real, $(A,B)$ be $k$-mildly coupled, and $u\in \R$ such that
$|u|<1/\|B\|_A$. Then $D(|A|^s)=D(|A+u B|^s)$ for
every $s\in[0, k/2 +1]$. 
\end{lemma}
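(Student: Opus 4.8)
The plan is to introduce $L:=A+uB$ and to establish the identities $D(|A|^s)=D(|L|^s)$ by a bootstrap in $s$, transporting the Kato--Rellich machinery from $\Hc$ to each intermediate space $D(|A|^{\sigma/2})$. First I would record that, since $\|B\|_A$ is finite, $B$ is $A$-bounded, and under the Kato--Rellich threshold $|u|\,\|B\|_A<1$ (automatic when $\|B\|_A=0$) the operator $L$ is skew-adjoint with $D(L)=D(A)=D(|A|)$ by \cite[Theorem X.12]{reed-simon-2}. Hence $|L|$ is a genuine nonnegative self-adjoint operator commuting with $L$, and for every $\sigma\ge0$ one has the ``one-step'' descriptions of the two scales,
\[
D(|L|^{1+\sigma/2})=\{\psi\in D(|L|^{\sigma/2}):L\psi\in D(|L|^{\sigma/2})\},\quad D(|A|^{1+\sigma/2})=\{\psi\in D(|A|^{\sigma/2}):A\psi\in D(|A|^{\sigma/2})\}.
\]
Writing $J:=\{s\in[0,k/2+1]:D(|A|^s)=D(|L|^s)\}$, the cases $s=0$ and (from the above) $s=1$ put $\{0,1\}\subset J$, and the goal becomes $J=[0,k/2+1]$.

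The heart of the argument, and the step I expect to be the main obstacle, is a ``jump by one'' in the scale. Fix $\sigma\in[0,k]$ and work in the Hilbert space $\Hc_{\sigma/2}:=D(|A|^{\sigma/2})$ with the norm $\|\cdot\|_{\sigma/2}$. By Lemma~\ref{Lem:kfeps-sfeps} the pair $(A,B)$ is $\sigma$-\FEPS, so Lemma~\ref{lem:KatoRellichBoundWeakCoupling} gives that $B$ is $A$-bounded \emph{on $\Hc_{\sigma/2}$} with relative bound at most $\|B\|_A$; meanwhile $A$, commuting with $|A|^{\sigma/2}$, is skew-adjoint on $\Hc_{\sigma/2}$ with domain $D(|A|^{1+\sigma/2})$. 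The difficulty is that $B$ is \emph{not} skew-symmetric on $\Hc_{\sigma/2}$, so in place of the self-adjoint Kato--Rellich theorem I would invoke its dissipative counterpart (\cite[Corollary of theorem X.50]{reed-simon-2}) together with the two-sided contraction-semigroup property of Definition~\ref{Def:WeakCoupling}(iii): since the relative bound $|u|\,\|B\|_A$ is $<1$, the operators $\pm(A+uB)$ (suitably shifted by constants) remain dissipative, and $L$ generates a $C_0$-group on $\Hc_{\sigma/2}$ with domain $D(|A|^{1+\sigma/2})$. This yields $\{\psi\in D(|A|^{\sigma/2}):L\psi\in D(|A|^{\sigma/2})\}=D(|A|^{1+\sigma/2})$, where the nontrivial inclusion ``$\subseteq$'' follows from surjectivity of $\lambda-L:D(|A|^{1+\sigma/2})\to\Hc_{\sigma/2}$ for large $\lambda>0$ combined with injectivity of $\lambda-L$ on $\Hc$. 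Feeding this into the scale descriptions above shows that $\sigma/2\in J$ forces $1+\sigma/2\in J$.

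Finally I would close the bootstrap. Whenever $s_0,s_1\in J$, the closed graph theorem makes the graph norms of the coinciding domains equivalent, and complex interpolation of the fractional-power scales of the nonnegative self-adjoint operators $\langle A\rangle$ and $\langle L\rangle$ (both having unitary, hence bounded, imaginary powers) gives $D(|A|^s)=D(|L|^s)$ for every $s\in[s_0,s_1]$; thus $J$ is an interval. Starting from $[0,1]\subset J$ and alternating the jump step (with $\sigma/2\le\min(M,k/2)$ whenever $[0,M]\subset J$) with interpolation enlarges $[0,M]$ to $[0,1+\min(M,k/2)]$, so $M$ grows by $1$ until $M\ge k/2$; a last jump with $\sigma=k$ then reaches $k/2+1$, and $J=[0,k/2+1]$. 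Apart from the jump step, the points needing care are checking that the constituent operators are genuinely skew-adjoint, respectively dissipative, on each $\Hc_{\sigma/2}$ (where the invertibility of $A$ from Definition~\ref{Def:WeakCoupling}(i) keeps $\|\cdot\|_{\sigma/2}$ a norm) and ensuring $|u|\,\|B\|_A<1$ is precisely the threshold used throughout; the interpolation and the integer-step bootstrap are routine.
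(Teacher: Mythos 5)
Your proposal is correct and follows essentially the same route as the paper's proof: both hinge on transporting the dissipative Kato--Rellich theorem to the intermediate spaces $D(|A|^{\sigma/2})$ via Lemma~\ref{Lem:kfeps-sfeps}, Lemma~\ref{lem:KatoRellichBoundWeakCoupling} and Proposition~\ref{Prop:ThmLumerPhillips}, and then climb the scale by unit steps combined with domain interpolation (the paper inducts from the fractional part of $k/2$, you run an interval bootstrap, and you make explicit the surjectivity-plus-injectivity identification that the paper leaves implicit). Your reading of the hypothesis as the Kato--Rellich threshold $|u|\,\|B\|_A<1$ is also the paper's evident intent (cf.\ Proposition~\ref{PRO_cont_entree_sortie_Hk-BV}), the condition $|u|<\|B\|_A$ as literally stated being a typo.
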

\begin{proof}
We proceed by induction on $j$ to prove $D(|A|^{k/2-\lfloor k/2\rfloor +j})=D(|A+uB|^{k/2-\lfloor k/2\rfloor +j})$ for $j\leq \lfloor k/2\rfloor +1$.
By Kato--Rellich theorem, $D(A)=D(A+uB)$ for every $u$ in $(-1/\|B\|_A,1/\|B\|_A)$. By interpolation, see Corollary~\ref{Lem:DomainInterpolation} in Appendix~\ref{sec:interpolation}, $D(|A|^s)=D(|A+uB|^s)$ for $0\leq s \leq 1$ and in particular for $s=\frac{k}{2}-\lfloor \frac{k}{2} \rfloor$. This initializes the induction for $j=0$.

Let us assume that 
 $D(A^\ell)=D((A+u B)^\ell)$ for some $\ell \leq \lfloor k/2\rfloor$.
By definition, 
$$D(A^{\ell+1})=\{f \in D(A^\ell)|Af \in D(A^\ell)\},$$ 
and, using the
inductive hypothesis,
$$
D(|A+u B|^{\ell+1})=\{f \!\in\! D(|A+u B|^\ell)||A+u B| f \! \in D(|A+u B|^\ell)\}=
\{f \!\in D(|A|^\ell)|(A+u B)f \in D(|A|^\ell)\}.
$$

So that $D(|A+u B|^{\ell+1})$ is the domain of $A+uB$ as an operator acting on $D(|A|^\ell)$. The domain of $A$
as an operator acting on $D(|A|^\ell)$ is $D(|A|^{\ell+1})$. Since $A$ is skew adjoint on $D(|A|^\ell)$ and $B-c$ (or
$-B-c'$) is dissipative, since $\ell \leq k/2$,  in $D(|A|^\ell)$ due to Proposition~\ref{Prop:ThmLumerPhillips}, using Lemma~\ref{lem:KatoRellichBoundWeakCoupling}
and Kato-Rellich theorem we conclude that $A+uB-c''$ with domain $D(|A|^{\ell+1})$ is maximal dissipative in $D(|A|^\ell)$ for some constant $c''$ sufficiently large. This implies $D(|A+u B|^{\ell+1})=D(|A|^{\ell+1})$.

This completes the iteration and provides the conclusion.
\end{proof}

\subsection{Higher regularity}
\label{SEC_Higher_regularity}
From Lemma~\ref{Rem:NonInteractingCase} and Proposition~\ref{prop:continuity}, we
deduce the following statement.

\begin{proposition}\label{PRO_cont_entree_sortie_Hk-BV}
Let $k$ be a nonnegative real, $(A,B)$ be $k$-mildly coupled, $B$ be $A$-bounded, and $K = [-1/(2\|B\|_{A}),1/(2\|B\|_{A})]$.
For any $u\in BV([0,T],K)$ consider the
family of contraction propagators $\Upsilon^u$ associated with $A+u(t)B$. Then
$\Upsilon_{t,s}^u(D(|A|^{k/2}))\subset D(|A|^{k/2})$, for any $(s,t)\in\Delta_{[0,T]}$, and: 
\begin{itemize}
 \item[$(i)$] for any $t\in [0,T]$ and for any $\psi_0\in D(|A|^{k/2})$
 $$
  \|\pro^{u}_{t}(\psi_0)\|_{k/2}\leq e^{c_k(A,B) \int_0^t|u|}  \|\psi_0\|_{k/2}.
$$
 \item[$(ii)$] for any $t\in [0,T]$ and for any $\psi_0\in D(|A|^{1+k/2})$
 there exists $M$ 
  (depending only on $A$, $B$, and $\|u\|_{L^\infty([0,T])}$) such that
$$
 \|\pro^{u}_{t}(\psi_0)\|_{1+k/2}\leq Me^{M \TV{u}{[0,t]}{K}}e^{c_k(A,B)\int_0^t \left|u\right|}  \|\psi_0\|_{1+k/2}.
 $$
 \end{itemize}
Moreover, for every $\varepsilon$ in $(0,1+k/2)$, for every
$\psi_{0}$ in $D(|A|^{k/2+1-\varepsilon})$, the end-point
mapping
\begin{align*}
\pro_T(\psi_{0}):BV([0,T],K)&\rightarrow  D(|A|^{k/2+1-\varepsilon})\\
u&\mapsto \Upsilon^u_{T}(\psi_{0})
\end{align*}
is continuous.
\end{proposition}
\begin{proof}
Let us begin with the case $k=0$. By hypothesis, $(A,B,K)$ satisfies Assumption~\ref{ass:ass} and, by Lemma~\ref{Rem:NonInteractingCase},  $t\mapsto A+u(t)B$ satisfies Assumption~\ref{ass:kato-generale} for every $u$ in $BV(I,K)$. The statements $(i)$ and $(ii)$ for $k=0$ follow from Theorem \ref{thm:kato}. The continuity of the end-point mapping with value in $\Hc$ follows from Corollary~\ref{cor:cont_input_output_bv} and item $(ii)$.

The idea to deal with the case $k>0$ is then to prove the existence of propagator in $D(|A|^{k/2})$. By Lemma~\ref{Lem:kfeps-sfeps}, this implies existence  of a propagator in $D(|A|^{s/2})$ for any $s\in[0,k]$.  Since  
$D(|A|^{k/2})\subset \Hc$, by uniqueness, Theorem~\ref{thm:kato}, each propagator is the restriction of the one defined for $s=0$.

Consider $k>0$.  If $c_k(A,B)=0$, Lemma \ref{lem:KatoRellichBoundWeakCoupling} ensures that the triple $(A,B,K)$ satisfies Assumption \ref{ass:ass} in $D(|A|^\frac{k}{2})$ and, for any $u$ in $BV(I,K)$, the mapping $t\mapsto A+u(t)B$ satisfies Assumption \ref{ass:kato-generale} in  $D(|A|^\frac{k}{2})$. Statements $(i)$ and $(ii)$ follows from Theorem \ref{thm:kato}. In the case where $c_k(A,B)>0$, in order to obtain contraction semigroups, we  consider $A(t)=A+u(t)B-c_k(A,B)|u(t)|$.  This induces minor technical variations in the proof to check that $t\mapsto A(t)$ satisfies Assumption \ref{ass:kato-generale}. For the reader's sake, we detail them below.

    Using 
$(A,B)$ to be $k$-{mildly coupled}, in Lemma~\ref{lem:KatoRellichBoundWeakCoupling} and 
Kato--Rellich theorem for dissipative operators (see \cite[Corollary of Theorem X.50]{reed-simon-4}) provides that $A(t)$ satisfies
Assumption~\ref{ASS:maximalDissipative} with $I=[0,T]$ and $D(|A|^{\frac{k}{2}})$ instead of $\Hc$. Notice that indeed the domain of $A$ as an operator acting on $D(|A|^{\frac{k}{2}})$ is $D(|A|^{1+\frac{k}{2}})$. In the following, we check Assumptions~\ref{Ass:BV} and \ref{Ass:ResolventBound}.

 From Lemma~\ref{lem:KatoRellichBoundWeakCoupling},  
if $a \in (\|B\|_A,2\|B\|_{A})$ we deduce that there exists $b_a$ such 
that for any $\psi\in D(|A|^{1+k/2})$
\begin{align*}
 \|(1-A-uB+ c_k(A,B)|u|) \psi\|_{k/2}&\geq & \|(1-A)\psi\|_{k/2}- |u|\|B \psi\|_{k/2} -  c_k(A,B)|u| \|\psi \|_{k/2} \\
 &\geq& (1-a|u|)\|(1-A)\psi\|_{k/2}-|u|(b_a +c_k(A,B)) \|\psi\|_{k/2} 
\end{align*}
or
\[
 \|(1-A-uB + c_k(A,B)|u|) \psi\|_{k/2}+|u|(b_a+ c_k(A,B))\|\psi\|_{k/2}\geq  (1-a|u|)\|(1-A)\psi\|_{k/2}.
\]
Note that for the choice of $K$ and $a$ we have that $a|u| <1$. Therefore
\[
 \|B\psi\|_{k/2}\leq a\|(1-A)\psi\|_{k/2}\leq \frac{a}{1-a|u|}\|(1-A-uB+ c_k(A,B)|u|) \psi\|_{k/2}+\frac{b_a+ c_k(A,B)}{1-a|u|}\|\psi\|_{k/2}.
\]
Hence, 
\begin{align*}
 \|(1-A(t))^{-1}\|_{L(D(|A|^{k/2}),D(|A|^{1+k/2}))}&=\|A(1-A-u(t)B+ c_k(A,B)|u(t)|)^{-1}\|_{k/2}\\
 &\leq \|(A+u(t)B- c_k(A,B)|u(t)|)(1-A-u(t)B+ c_k(A,B)|u(t)|)^{-1}\|_{k/2}\\
 &\qquad+|u(t)|\|B(1-A-u(t)B+ c_k(A,B)|u(t)|)^{-1}\|_{k/2}\\
 &\leq 2+\frac{|u(t)|}{1-a|u(t)|} \left ({a}+{b_a+ c_k(A,B)} \right ).
 \end{align*}
 Recall that, by assumption,  $\sup_{t \in [0,T]} a|u(t)|\leq \frac{a}{2\|B\|_A}<1$.
Taking the supremum on $t \in [0,T]$  leads to 
\begin{equation}\label{eq:boundon1-A}
\sup_{t\in[0,T]} \|(1-A(t))^{-1}\|_{L(D(|A|^{k/2}),D(|A|^{1+k/2}))}
\leq 2+\frac{a}{2 \|B\|_A -a} \left ( a+ b_a +c_k(A,B)\right ). 
\end{equation}

As moreover for $A_n(t)=A+u_n(t)B-|u_n(t)|c_k(A,B)$ and $A(t)=A+u(t)B-|u(t)|c_k(A,B)$ and $\lambda$ sufficiently large such that
$\TV{A_n}{[0,T]}{L(D(A),\Hc)} \leq \TV{u_n}{[0,T]}{K}(\|B\|_{L(D(A),\Hc)}+c_k(A,B)) $,
$\|A_n(0)\|_{L(D(A),\Hc))}\leq 1+|u_n(0)|(\|B\|_{L(D(A),\Hc)}+c_k(A,B))$, 
and
\begin{align*}
 (A_n(t)-\lambda)^{-1}-(A(t)-\lambda)^{-1}=&(u_n(t)-u(t))(A_n(t)-\lambda)^{-1}B(A(t)-\lambda)^{-1} \\&+ (|u_n(t)|-|u(t)|)(A_n(t)-\lambda)^{-1}c_k(A,B)(A(t)-\lambda)^{-1} 
\end{align*}
so that the strong resolvent convergence of $A_n$ to $A$ turns to be a
consequence of the convergence of $u_n$ to $u$ in $BV([0,T],K)$.  
\end{proof}

\begin{remark}
The bound on the control $|u| \leq 1/(2\|B\|_{A})$ in Proposition~\ref{PRO_cont_entree_sortie_Hk-BV} is  technical. We could enlarge the set of admissible control  and consider $K = [-1/\|B\|_{A} + \varepsilon, 1/\|B\|_{A}- \varepsilon]$ for some $\varepsilon>0$.  In this case the constant $a$ in the proof would be in the open interval $(\|B\|_{A}, \|B\|_{A}/(1-\varepsilon \|B\|_{A}) )$, the bound~\eqref{eq:boundon1-A} would depend on $\varepsilon$, and would tend to infinity as $\varepsilon$ goes to $0$. 
\end{remark}

We now state another version of
Corollary~\ref{Cor:NoExactControllability}.  
\begin{corollary}\label{Cor:NoExactControllability2}
Let $k$ be a nonnegative real. Let
$(A,B)$ be $k$-{mildly coupled}, $B$ be $A$-bounded,   and $K = (-1/(2\|B\|_A),1/(2\|B\|_A))$. Then, for every $\varepsilon$ in $(0,1+k/2)$ and every $\psi_0$ in $D(|A|^{1+k/2-\varepsilon})$ 
$
 \left\{ \ \Upsilon_t^{u}(\psi_{0}), 
u \in BV([0,+\infty),K),  t\geq 0 \right\}
$
is  a  countable union of relatively compact subsets in 
$D(|A|^{\frac{k}{2}+1-\varepsilon})$.
\end{corollary}

\begin{proof}
The proof follows step-by-step the principle exposed in Section \ref{SEC_principe_obstruction} and the proof of Corollary \ref{COR_obstruction_contr_exacte_BV}.
\end{proof}

\begin{proof}[Proof of Theorem~\ref{THE_intro_weak_coupling}]
Theorem~\ref{THE_intro_weak_coupling} is  consequence of Corollary~\ref{Cor:NoExactControllability2} when $\|B\|_{A}$ vanishes. 
\end{proof}

\paragraph{Remarks on the exact controllability associated with the time reversibility.}\label{sec:timereversibility}
Let $(A,B,K)$ satisfies Assumption~\ref{ass:ass} (or Assumptions~\ref{ass:Interacting}) with $A$ skew-adjoint and $B$ skew-symmetric then $(-A,-B,K)$ satisfies Assumption~\ref{ass:ass} (or Assumptions~\ref{ass:Interacting}). If $(A,B)$ is $k$-{mildly coupled} then $(-A,-B)$ is $k$-mildly coupled.

For $u$, a bounded variation function (or a Radon measure, see Section~\ref{SEC_weak_coupling_for_radon} below) on $(0,T]$ with value in $K$ 
and  $\Upsilon^u$ the associated contraction propagator.
For any $(t,s)\in \Delta_{[0,T]}$, $\Upsilon^u_{t,s}$ is unitary and its inverse coincides with
$\Upsilon^{u(T-\cdot)}_{T-s,T-t}$ where $u(T-\cdot)$ denotes $t\in[0,T]\mapsto u(T-t)$ in the framework of Assumption~\ref{ass:ass} (or $t\in[0,T]\mapsto u((0,T])-u((0,t])=u([t,T))$ in the framework of Assumption~\ref{ass:Interacting}).

\subsection{Extension to Radon measures } \label{SEC_weak_coupling_for_radon}

The conclusion of Proposition~\ref{rem:compactness} can be extended 
to $D(|A|^{k/2})$ if Assumption~\ref{ass:commutator} holds true in $D(|A|^{\frac{k}{2}})$ 
instead of $\Hc$. 
This is indeed the only missing assumption needed in order to apply 
Corollary~\ref{cor:continuity} with $D(|A|^{\frac{k}{2}})$ 
instead of $\Hc$. Without this assumption the following result together with
the interpolation result of Lemma~\ref{LEM_interpolation} gives an interesting
extension.

\begin{proposition}\label{prop:0987}
Let $k$ be a positive real. Let 
$(A,B)$ satisfy Assumption \ref{ass:StrongInteraction} and be $k$-mildly coupled. Then, for every $s\in[0,k]$, $\psi_{0} \in
D({|A|}^{s/2})$, for every
$T\geq 0$, one has $\pro^{\mathfrak{d}v}_{T}(\psi_{0})\in D({|A|}^{s/2})$ and
 \[
 \|\pro^{\mathfrak{d}v}_{T}(\psi_{0})\|_{{s/2}} \leq
e^{\frac{s}{k}c_k(A,B) |u|([0,T]) }\|\psi_{0}\|_{{s/2}}
 \]
for every $v$ in $BV([0,T],K)$ with derivative $v'=u\in \radon{[0,T]}$.
\end{proposition}
\begin{proof}
We give the proof for $s=k$, then by Lemma~\ref{Lem:kfeps-sfeps} the proof
applies to the case $s<k$.

Consider a sequence $v_n$ of piecewise constant functions converging to
$v$ pointwise with $\|v_n\|_{BV([0,T])}\leq K$. Then
$v_n$ is the cumulative function of $v_n'$, a discrete sum of Dirac delta
functions and, from \eqref{Eq:InteractionFramework}, $\pro^{\mathfrak{d}v_n}_t$ is a
product of unitary operators of the form 
$$e^{vB} e^{-vB} e^{tA} e^{vB}=e^{t  A} e^{vB}.$$ 
So that, for every $\psi$ in $D(|A|^{k/2})$, 
$$\|e^{vB} e^{-vB} e^{tA} e^{vB}\psi\|_{k/2}=\|e^{vB}\psi\|_{k/2}\leq
M(v)\|\psi\|_{k/2}$$
where $M(v):=\|e^{vB}\|_{L(D(|A|^{k/2}),D(|A|^{k/2}))}$. From  
Definition~\ref{Def:WeakCoupling}, equation~\eqref{eq:ck}, and $ M(v_1+v_2)\leq M(v_1)M(v_2)$ for any
pair $(v_1,v_2)$ in $[0,\delta]^2$
we have
\[
 M(v)\leq e^{c_k(A,B)|v|}, \quad \mbox{ for all } v \in \R.
\]
Hence, for every $n$, 
$$
 \|\pro^{v'_n}_{t}(\psi_{0})\|_{{k/2}} \leq e^{c_k(A,B)K} \| \psi_0\|_{k/2}.
 $$
For every $f$ in $D(|A|^k)$,
$$|\langle |A|^k f, \pro^{v'_n}_t \psi_0 \rangle| \leq \|f\|_{k/2} \|\psi_0\|_{k/2}
e^{c_k(A,B) K}.
$$
Because of the continuity result (Proposition \ref{prop:continuity}, Corollary \ref{cor:continuity} and Remark~\ref{Rem:CommAssumption}), the left
hand side tends to $|\langle |A|^k f, \pro^{u}_t \psi_0 \rangle|$ as $n $ tends
to infinity. 
Hence, for every $f$ in $D(|A|^k)$
$$|\langle |A|^k f, \pro^{u}_t \psi_0 \rangle|\leq \|f\|_{k/2} \|\psi_0\|_{k/2} e^{
c_k(A,B)
K}.$$
As a consequence, $\pro^{u}_t \psi_0 $ belongs to $D((|A|^{k/2})^{\ast})=D(|A|^{k/2})$
and 
\[\| |A|^{k/2}  \pro^{u}_t \psi_0 \|\leq  \|\psi_0\|_{k/2} e^{ c_k(A,B)
K}.\qedhere\]
\end{proof}
\begin{remark}
Assumption~\ref{ass:commutator} implies that 
$(A,B)$ is $2$-mildly coupled. Indeed, if $(A,B)$ is a pair of skew-adjoint operators satisfying 
Assumption~\ref{ass:StrongInteraction}, then Assumption~\ref{ass:commutator} implies, see~Remark \ref{Rem:CommAssumption}, for small $|t|$ that, for every $\psi$ in $D(A)$,
\begin{align*}
\||A|e^{-tB}\psi\|=\|Ae^{-tB}\psi\|=\|e^{tB}Ae^{-tB}\psi\|
&\leq \|e^{tB}Ae^{-tB}\psi-A\psi\| +\|A\psi\|\\
&\leq (1+L|t|)\|A\psi\|\leq e^{L|t|}\|A\psi\|=e^{L|t|}\||A|\psi\|
\end{align*}
as the map $t\in \R \mapsto e^{t B}Ae^{-t B} \in
L(D(A),\Hc)$ is locally Lipschitz with constant $L$. Thus $(A,B)$ is $2$-mildly coupled. 
\end{remark}

As a consequence of Corollary~\ref{cor:continuity} and
Lemma~\ref{LEM_interpolation} we have the following proposition.
\begin{proposition}\label{PRO_cont_entree_sortie_Hk}
Let $k$ be a positive real,
let
$(A,B)$ satisfy Assumption~\ref{ass:StrongInteraction}, and let $(A,B)$ be $k$-mildly coupled.
Then for any $s\in [0,k)$, for every $\psi_{0}$ in
$D(|A|^{s/2})$, 
 the end-point
mapping
\begin{align*}
\pro(\psi_{0}):BV([0,T],\R)&\rightarrow D(|A|^{s/2})\\
v&\mapsto \pro_T^{\mathfrak{d}v} (\psi_{0}),
\end{align*}
is continuous.
\end{proposition}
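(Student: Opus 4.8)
The plan is to deduce the $D(|A|^{s/2})$-continuity by interpolating between the continuity already available in $\Hc$ and the uniform a priori bound in $D(|A|^{k/2})$ supplied by Proposition~\ref{prop:0987}, and then to remove the extra smoothness on $\psi_0$ by a density argument. Since convergence in $\radon{(0,T]}$ is a sequential notion, it suffices to prove sequential continuity: if $u_n\to u$ in $\radon{(0,T],K}$, so that $L:=\sup_n|u_n|((0,T])<+\infty$ and $u_n((0,t])\to u((0,t])$ for every $t$, then $\pro^{u_n}_T(\psi_0)\to\pro^{u}_T(\psi_0)$ in $D(|A|^{s/2})$. One cannot simply run Corollary~\ref{cor:continuity} with $D(|A|^{k/2})$ in place of $\Hc$: as noted just before the statement, this would require the bounded-variation estimate \eqref{eq:comm}, i.e. Assumption~\ref{ass:commutator} transported to the scale of $A$, whereas the weak-coupling hypothesis only yields the norm bound of Proposition~\ref{prop:0987}, not the Lipschitz-in-time control needed for direct continuity in the stronger space. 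This is exactly why the statement is confined to $s<k$: the interpolation exponent has to be strictly below one.

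First I would treat initial data $\psi_0\in D(|A|^{k/2})$. The continuity of the input-output map into $\Hc$ is already established (Proposition~\ref{prop:propagator-radon}, via Corollary~\ref{cor:continuity}, whose bounded-variation hypothesis in $L(D(A),\Hc)$ is guaranteed by the uniform total variation bound $L$), so $\phi_n:=\pro^{u_n}_T(\psi_0)-\pro^{u}_T(\psi_0)\to 0$ in $\Hc$. On the other hand, Proposition~\ref{prop:0987} applied with exponent $k$ gives
\[
\|\pro^{u_n}_T(\psi_0)\|_{k/2}\leq e^{c_k(A,B)L}\|\psi_0\|_{k/2},\qquad \|\pro^{u}_T(\psi_0)\|_{k/2}\leq e^{c_k(A,B)L}\|\psi_0\|_{k/2},
\]
so $(\|\phi_n\|_{k/2})_n$ is bounded by $2e^{c_k(A,B)L}\|\psi_0\|_{k/2}$. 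Interpolating (Lemma~\ref{LEM_interpolation}) between the exponents $0$ and $k$ at level $s\in(0,k)$,
\[
\|\phi_n\|_{s/2}\leq C\,\|\phi_n\|^{1-s/k}\,\|\phi_n\|_{k/2}^{s/k},
\]
the first factor tends to $0$ while the second stays bounded, whence $\phi_n\to 0$ in $D(|A|^{s/2})$ (the value $s=0$ being the $\Hc$-continuity itself).

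Finally I would pass to arbitrary $\psi_0\in D(|A|^{s/2})$ by density. Fix $\e>0$ and choose $\tilde\psi_0\in D(|A|^{k/2})$ with $\|\psi_0-\tilde\psi_0\|_{s/2}<\e$, which is possible as $D(|A|^{k/2})$ is dense in $D(|A|^{s/2})$ for $s<k$. By linearity of each $\pro^{v}_T$ and the energy estimate of Proposition~\ref{prop:0987} at exponent $s$, for every $v$ with $|v|((0,T])\leq L$,
\[
\|\pro^{v}_T(\psi_0)-\pro^{v}_T(\tilde\psi_0)\|_{s/2}=\|\pro^{v}_T(\psi_0-\tilde\psi_0)\|_{s/2}\leq e^{\frac{s}{k}c_k(A,B)L}\,\e,
\]
uniformly in $v$. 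Splitting
\[
\pro^{u_n}_T(\psi_0)-\pro^{u}_T(\psi_0)=\big(\pro^{u_n}_T(\psi_0)-\pro^{u_n}_T(\tilde\psi_0)\big)+\big(\pro^{u_n}_T(\tilde\psi_0)-\pro^{u}_T(\tilde\psi_0)\big)+\big(\pro^{u}_T(\tilde\psi_0)-\pro^{u}_T(\psi_0)\big),
\]
the two outer terms are each at most $e^{\frac{s}{k}c_k(A,B)L}\e$ (uniformly in $n$, since $|u_n|((0,T])\leq L$), while the middle term tends to $0$ by the previous step because $\tilde\psi_0\in D(|A|^{k/2})$. Hence $\limsup_n\|\pro^{u_n}_T(\psi_0)-\pro^{u}_T(\psi_0)\|_{s/2}\leq 2e^{\frac{s}{k}c_k(A,B)L}\e$, and letting $\e\to0$ yields the claim. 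The main obstacle is the endpoint degeneracy at $s=k$: there the interpolation exponent equals one, the vanishing weak factor disappears, and the argument collapses — precisely the gap that Assumption~\ref{ass:commutator}, lifted to the scale of $A$, would be needed to close.
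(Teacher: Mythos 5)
Your proof is correct and follows essentially the same route as the paper's: exploit the $\Hc$-convergence of $\pro^{u_n}_T(\psi_0)-\pro^{u}_T(\psi_0)$ together with the uniform $D(|A|^{k/2})$-bound from Proposition~\ref{prop:0987}, and conclude by the interpolation Lemma~\ref{LEM_interpolation}; this is precisely the paper's two-line argument. The one genuine difference is your final density step: the paper's proof, as written, only applies to $\psi_0\in D(|A|^{k/2})$ (otherwise the claimed uniform boundedness of the difference in $D(|A|^{k/2})$ is unavailable), whereas the statement allows $\psi_0\in D(|A|^{s/2})$ only; your three-term splitting, using the uniform estimate $\|\pro^{v}_T\|_{L(D(|A|^{s/2}))}\leq e^{\frac{s}{k}c_k(A,B)L}$ of Proposition~\ref{prop:0987} at exponent $s$ together with the density of $D(|A|^{k/2})$ in $D(|A|^{s/2})$, closes exactly this gap, so your write-up is in fact more complete than the paper's.
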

\begin{proof}
 Let $(v_n)_{n\in \N}$ be a converging sequence in  $BV([0,T],\R)$ to some $v$ in  $BV([0,T],\R)$. Then 
 $
  \pro_T^{\mathfrak{d}v_n}(\psi_{0})-\pro_T^{\mathfrak{d}v}(\psi_{0})
 $
 is uniformly bounded in $D(|A|^{k/2})$ (by Proposition \ref{prop:0987}) and converges to $0$ in $\Hc$ (by Proposition \ref{prop:continuity}, Corollary \ref{cor:continuity} and Remark \ref{Rem:CommAssumption}). By
 Lemma~\ref{LEM_interpolation} it converges to $0$ in $D(|A|^{s/2})$ for $s<k$.
\end{proof}

\begin{remark}\label{Rem:CompactnessRadonMidlyCoupled}
Under the assumptions of
Proposition~\ref{PRO_cont_entree_sortie_Hk}
both  Proposition~\ref{rem:compactness}
and Corollary~\ref{Cor:NoExactControllability} 
extend to $D(|A|^{s/2})$ for $s \in [0,k)$.
\end{remark}

\section{Bounded control potentials}\label{Sec:BoundedPotential}

\subsection{Dyson expansion solutions}
In the Hilbert setting, if $A$ is maximal dissipative and $B$ stabilizes $D(A)$ Corollary~\ref{Cor:NoExactControllability} provides an extension of \cite[Theorem 3.6]{BMS} to $L^1$ controls. This result can be extended to the Banach framework, with $A$ being generator of a strongly continuous semigroup.
Below we extend Corollary~\ref{Cor:NoExactControllability} to bounded control potentials $B$, for $L^1$ controls, to the Banach framework without any further assumption in $B$, providing a proof of Proposition~\ref{PRO_introduction_BMS_Bborne}.

Troughout this section only, we consider a Banach space $\X$ and we assume that $A$, acting on $\X$, is the generator of a
strongly continuous semigroup with domain $D(A)$ and $B$ is bounded. Then for every
$u$ in $\mathbf{R}$, $A+uB$ is also a generator of a strongly continuous
semigroup with domain $D(A)$. This can be deduced form an analysis of the Dyson expansion.

Since $A$ generates a strongly continuous semigroup there exist $C_A>0$ and $\omega\in \R$
such that
\begin{equation}\label{eq:ExpGrowthA}
 \|e^{tA}\|\leq C_A e^{\omega t},\quad \forall t>0.
\end{equation}
For the equivalent norm
\begin{equation}\label{eq:Nnorm}
N(\psi)=\sup_{t>0} \|e^{t(A-\omega)}\psi\|,
\end{equation}
we have that $A-\omega$ is the generator of a contraction semigroup. 
If $B \in \Bc(\X)$ is bounded for the norm  $N$ let $\|B\|_N$ be its norm. Now for every $u \in BV([0,T],[-R,R])$ we consider the family of operators $A-\omega+u(t)B-R\|B\|_N$ which
satisfies the assumptions of~\cite{Kato1953} in the Banach space structure associated with the norm $N$. 
So that in this case the results of Section~\ref{Sec:Propagators} are still valid.

It is classical (see \cite{BMS}) that the input-output mapping $\Upsilon$ admits a unique continuous
extension to $L^1(\R,\R)$. We consider below the extension to Radon measures.

\nuovo{
\begin{definition}\label{DEF_solution_mild_Radon}
Let $A$, with domain $D(A)$, be the generator of a strongly continuous semigroup on $\X$
and let $B$ be bounded on $\X$. We say that $\psi:[0,T]\to \X$ is a mild solution of
\eqref{eq:main} on $[0,T]$ if $\psi$ is bounded on $[0,T]$ and there exists $\psi_0$ in $\X$ such that, for every $t$ in $[0,T]$,
\begin{equation}\label{EQ_def_solution_mild_radon}
\psi(t)=e^{tA}\psi_0 + \int_{[0,t)}
e^{(t-s_1)A}B\psi(s) \mathrm{d}u(s_1).
\end{equation}
\end{definition}

If $X_1$ and $X_2$ are two metric spaces,  we denote by 
$\Bor(X_1,X_2)$ the space of bounded borelian functions from $X_1$ to $X_2$. If 
 $X_2$ is a Banach spaces, then $\Bor(X_1,X_2)$ is a Banach space as 
well, endowed with the $\sup$ norm $\| \psi\|_{\Bor(X_1,X_2)}=\sup_{t\in X_1}\|\psi(t)\|_{X_2}$ . 

\begin{remark}
Definition \ref{DEF_solution_mild_Radon} only makes  sense 
if \eqref{EQ_def_solution_mild_radon} holds for every $t$ in 
$[0,T]$, since an equality valid almost everywhere only would miss the atoms of 
$u$. For this reason, we will consider solutions in $\Bor([0,T],\X)$ instead of $L^\infty$.   
\end{remark}

Proposition \ref{PRO_proprietes_elementaires_solutions_mild_Radon} below states  immediate regularity properties of the mild solutions.  
\begin{proposition}\label{PRO_proprietes_elementaires_solutions_mild_Radon}
Let $T>0$, $u$ in $\radon{[0,T]}$ and $\psi$ be a mild solution of \eqref{eq:main} on $[0,T]$, associated with $\psi_0$ in $D(A)$. 
 Then $\psi$ has bounded variation, is left continuous everywhere on $[0,T]$ and $\psi(0)=\psi_0$.  Moreover, the discontinuities of $\psi$ (if any) happen on atoms of $u$, and, for every $t$ in $[0,T]$, 
\[ \psi(t+0)-\psi(t)= \psi(t+0)-\psi(t-0)=u(\{t\}) B \psi(t)= u(\{t\}) B \psi(t-0).\]
\end{proposition}
\begin{proof}
 We have that 
\begin{equation*}
\psi(t-0)=e^{tA}\psi_0 + \int_{[0,t)}
e^{(t-s_1)A}B\psi(s) \mathrm{d}u(s_1)\quad\mbox{ and }\quad \psi(t+0)=e^{tA}\psi_0 + \int_{[0,t]}
e^{(t-s_1)A}B\psi(s) \mathrm{d}u(s_1),
\end{equation*}
due to inner and outer regularity of Radon measures.
\end{proof}
}

\begin{theorem}\label{Thm:ExistenceBoundedPotential}
Let $A$, with domain $D(A)$, be the generator of a strongly continuous semigroup on $\X$
and let $B$ be bounded on $\X$. 
Then, for every $\psi_0$ in $\X$, for every $T>0$, for any $u\in \radon{[0,T]}$, for every $s$ in $[0,T)$,  the Cauchy problem \eqref{eq:main} with initial condition $\psi_0$ at time $s$,  admits a unique mild solution $t\mapsto \Xi^u_{t,s}\psi_0$  bounded in $\X$ uniformly on $[s,T]$. That is for every $t \in \Delta_{[0,T]}$
\begin{align*}
\Xi^u_{t,s}\psi_0&=e^{(t-s)A}\psi_0 + \int_{[s,t)}
e^{(t-s_1)A}B\Xi^u_{s_1,s}\psi_0 \mathrm{d}u(s_1)\\
\sup_{(s,t)\in \Delta_{[0,T]}}& \|\Xi^u_{t,s}\psi_0 \| <\infty.
\end{align*}

Moreover
\begin{itemize}
\item[$(i)$] $\Xi^u(s,s)=I_{\X}$,
\item[$(ii)$] $\Xi^u_{t,s}=\Xi^u(t,r)\Xi^u(r,s)$, for any $s<r<t$, 
\item[$(iii)$] if $u$ has bounded variation on $[0,T]$, for any $\psi_0\in \X$, $(s,t)\in\Delta_{[0,T]}\mapsto \Xi^u_{t,s}\psi_0$
is strongly continuous in $s$ and $t$ and
if $\psi_0 \in D(A)$ then it is
strongly right differentiable in $t$ with 
derivative $(A+u(t+0)B)\pro^u(t,s)\psi_0$,
\item[$(iv)$] for any $u \in \radon{[0,T]}$, 
$\Xi^u$ satisfies
$$
 \|\Xi^u_{t,s}\|_{L(\X)}\leq C_Ae^{\omega|t-s|+|u|([s,t])C_A\|B\|},
$$
\item[$(v)$] for any $r>0$, $R>0$, $\psi_0\in \X$ with $\|\psi_0\|=r>0$, the set
\[
 \left\{\Xi^u_{t,s}\psi_0 \mid u\in \radon{[0,T]}, |u|((0,T])\leq R, (s,t)\in \Delta_{[0,T]}\right\}
\]
is relatively compact.
\end{itemize}
\end{theorem}

\begin{proof}
Let $u\in\radon{[0,T]}$.  We first consider the case where $\psi_0$ belongs to $D(A)$ and we extend the result by density. Then, by Proposition \ref{PRO_proprietes_elementaires_solutions_mild_Radon}, any mild solution of \eqref{eq:main} taking value $\psi_0$ at time $0$ satisfies 
\[
\psi(t)=e^{tA} \psi_0^+ +  \int_{s\in (0,t)} e^{(t-s)A} B \psi(s) \mathrm{d}u(s),  
\]
with $\psi_0^+=\psi_0+u(\{0\})B \psi_0$. In other words, the restriction to $(0,T]$ of the mild  solutions (if any) of  \eqref{eq:main} on $[0,T]$ taking value $\psi_0$ at time $0$ are exactly the fixed points of 
 \[
 \begin{array}{llcl}
 F^u_T:&\Bor((0,T],\X) &\to & \Bor((0,T],\X) \\
 & \psi & \mapsto & e^{tA} \psi_0^+ + \int_{s\in (0,t)} e^{(t-s)A} B \psi(s) \mathrm{d}u(s).  
 \end{array}
 \]
 We aim to prove that $F_T^u$ has a unique fixed point in $\Bor((0,T],\X)$. For 
 this, we will prove that $(F_T^u)^j$ is a contraction from $\Bor((0,T],\X)$ to itself, for an integer 
 $j$ large enough.   
We define
\[
 \begin{array}{llcl}
 G^u_T:&\Bor((0,T],\X) &\to & \Bor((0,T],\X) \\
 & \psi & \mapsto & \int_{s\in (0,t)} e^{(t-s)A} B \psi(s) \mathrm{d}u(s).
 \end{array}
 \]
As $B$ is bounded, for every $\psi$ in $\Bor((0,T],\X)$, for every $n$ in $\N$ 
\begin{eqnarray*}
\lefteqn{\|\left ( G^u_T\right )^n(\psi)\|_{\Bor((0,T],\X)}} \\
 &\leq & \left \| \int_{0<s_1<s_2 <\ldots <s_n< t}\!\!\!\!\!\!\!
e^{(t-s_n)A}Be^{(s_{n}-s_{n-1})A}\ldots Be^{(s_2-s_1)A}B e^{(s_1-s)A} \psi_0 \mathrm{d}u(s_1)
\mathrm{d}u(s_2)\ldots \mathrm{d} u(s_n) \right \|_{\Bor((0,T],\X)}\\
&\leq & e^{\omega (t-s)}
C_A^{n+1}\|B\|^n\|\psi\|_{\Bor((0,T],\X)} \int_{0<s_1< s_2 < \ldots < s_n< T}
\mathrm{d}|u|(s_1)\mathrm{d}|u|(s_2)| \dots \mathrm{d}|u|(s_n),\\
\end{eqnarray*}
and since $(0,T)^n$ contains the disjoint union of $\{0 <s_{\sigma(1)}< s_{\sigma(2)} < \dots < s_{\sigma(n)} < T\}$ over all permutations $\sigma$ of $\{1,2,\ldots,n\}$
\[ 
\|\left ( G^u_T(\psi)\right )^n\|_{\Bor((0,T],\X)}
\leq e^{\omega T}
C_A^{n+1}\|B\|^n\|\psi\|_{\Bor((0,T],\X)}\frac{|u|((0,T))^n}{n!}.
\] 
Note that, for $\phi_0^+(t) :=e^{tA} \psi_0^+$, 
\[
(F^u_T)^n(\psi)= \sum_{k=0}^{n-1}(G^u_T)^k(\phi_0^+)+(G^u_T)^n(\psi),
\]
converges in $\Bor((0,T],\X)$. 

In particular, there exists $n$ large enough such that $(F^u_T)^n$ is a contraction from $\Bor([0,T],\X)$ to itself, hence admits a unique fixed point $\psi_\infty$.  Since
 \[ (F^u_T)^{n} \circ F^u_T (\psi_\infty) =  (F^u_T)^{n+1} (\psi_\infty)= F^u_T \circ (F^u_T)^{n} (\psi_\infty)= F^u_T(\psi_\infty), 
 \]
 we have that $F^u_T(\psi_\infty)$ is also a fixed point of $(F^u_T)^n$, hence 
 $F^u_T(\psi_\infty)=\psi_\infty$, or, in other words, $\psi_\infty$ is the restriction to $(0,T]$ of the unique mild solution $t\mapsto \Xi^u_{t,0}(\psi_0)$ of \eqref{eq:main} on $[0,T]$ taking value $\psi_0$
 at time $0$. 
\nuovo{Conversely, noticing that the restriction to $(0,T]$  of any mild solution of \eqref{eq:main}, with initial condition $\psi_0$, is a fixed point of $F^u_T$ provides the uniqueness of the mild solution of \eqref{eq:main}.}

Setting the initial time at $s$ instead of $0$, the linear map $\Xi^u_{t,0}:\X \to \X$ extends to $t\geq s$, by $\Xi^u_{t,s}:\X \to \X$ such that
\[
\Xi^u_{t,s}\psi_0=e^{(t-s)A} \psi_0 +  \int_{r\in [s,t)} e^{(t-r)A} B \Xi^u_{r,s}\psi_0 \mathrm{d}u(r).
\]
The core idea of the Dyson expansion is to express $\Xi^u_{t,s}$ as the sum of a series. Precisely, we define for every $n\in \N$, the linear operator
$$
W^u_{(n)}(t,s):\psi_0^+ \in \X \mapsto 
\int_{s<s_1<s_2 <\ldots <s_n< t}\!\!\!\!\!\!\! \!\!\!\!\!\!\!\!\!\!\!\!\!\!
\!\!\!\!\!\!\!
e^{(t-s_n)A}Be^{(s_{n}-s_{n-1})A}\ldots Be^{(s_2-s_1)A}B e^{(s_1-s)A} \psi_0^+ \mathrm{d}u(s_1)
\mathrm{d}u(s_2)\ldots \mathrm{d} u(s_n).  
$$

Notice that
$$
W_{(0)}^u(t,s) \psi_0^+ = e^{(t-s)A}\psi_0^+,
\quad \mbox{ and } \quad
W^u_{(n+1)}(t,s)\psi_0^+ = \int_{(s,t)}e^{(t-\tau)A}  B W^u_{(n)}(\tau,s)\psi_0^+ \mathrm{d}u(\tau).
$$
Using the very same computation used above to prove that $G^u_T$ is a contraction, we get 
\[
\|W^u_{(n)}(t,s)\psi_0^+\|\leq e^{\omega (t-s)}
C_A^{n+1}\|B\|^n\|\psi_0^+\|\frac{|u|((s,t))^n}{n!},
\]
which proves by uniqueness that
\begin{equation}\label{Eq:UpsilonBoundedExpansion}
\Xi^u_{t,s} =  \sum_{n=0}^\infty W^u_{(n)}(t,s) \circ (1+u(\{s\})B),
\end{equation}
converges in norm in the set $L(\X)$ of the bounded operators of $\X$. This also provides 
\[
 \|\Xi^u_{t,s}\|_{L(\X)}\leq C_A (1+|u|([s,t) \|B\|) e^{\omega|t-s|+|u|([s,t))C_A\|B\|},
\]
\nuovo{and the fact that $\Xi^u_{t,s}$ is a bounded map, which admits an extension to $\X$ by density of $D(A)$. By abuse of notation, we still denote this extension with $\Xi^u_{t,s}$.}

Then we have that
\begin{align*}
\Xi^u_{t,s}\psi_0&=e^{(t-s)A}\psi_0 + \int_{[s,t)}e^{(t-s_1)A}B\Xi^u_{s_1,s}\psi_0 \mathrm{d}u(s_1)\\
&=e^{(t-s)A}\psi_0 + \int_{[s,r)}e^{(t-s_1)A}B\Xi^u_{s_1,s}\psi_0 \mathrm{d}u(s_1)
+\int_{[r,t)}e^{(t-s_1)A}B\Xi^u_{s_1,s}\psi_0 \mathrm{d}u(s_1)\\
&=e^{(t-r)A}\Xi^u_{r,s}\psi_0+\int_{[r,t)}e^{(t-s_1)A}B\Xi^u_{s_1,s}\psi_0 \mathrm{d}u(s_1)\\
&=\Xi^u_{t,r}\Xi^u_{r,s}\psi_0
\end{align*}
where we used the uniqueness in the last identity.

The differentiability properties in the bounded variation case are due to \cite[Theorem 1]{Kato1953}. \nuovo{Indeed recall that 
$A-\omega$ is the generator of a contraction semigroup and since $B$ in $\Bc(\X)$ 
then $B$ is bounded for the norm  $N$ defined in~\eqref{eq:Nnorm}. So that $A-\omega+u(t)B-R\|B\|_N$
for any $R>|u|_\infty$ 
satisfies the assumptions of \cite[Theorem 1]{Kato1953} in the Banach space $\X$
with norm $N$.}

We now consider the compactness property in the last statement. Without loss of generality by linearity and up to scaling $B$, we can assume $r=R=1$.
Let us prove that, for $\|\psi_0\|=1$,
\[
 \left\{\Xi^u_{t,s}\psi_0 \mid u\in \radon{[0,T]}, |u|([0,T])\leq 1, (s,t)\in \Delta_{[0,T]}\right\}
\]
is totally bounded for the topology of $\X$. Then its closure will be totally bounded and complete and thus compact.

Let us consider a radius $\epsilon>0$. In place of $\Xi^u_{t,s}\psi_0$, due to its norm convergence we can consider one of the truncated series in \eqref{Eq:UpsilonBoundedExpansion}, namely
\[
\sum_{n=0}^{n_\epsilon} W^u_{(n)}(t,s)\circ (1+u(\{s\})B),
\]
for some $n_\epsilon \in \N$ such that 
\[
\| \Xi^u_{t,s} - \sum_{n=0}^{n_\epsilon} W^u_{(n)}(t,s)\circ (1+u(\{s\})B)\| \leq (1+|u|([s,t) \|B\|) \sum_{n=n_\epsilon+1}^{\infty} e^{\omega T}
C_A^{n+1}\|B\|^n\|\psi_0\|\frac{1}{n!}\leq \epsilon.
\]
Since we consider a finite number of $W^{\cdot}_{(n)}(\cdot,\cdot)\circ (1+u(\{s\})B)$, namely $n_\epsilon$ of them, it is then enough to prove 
that 
\[
\mathcal{W}_n^{T} :=\left\{W^u_n(t,s)\circ (1+u(\{s\})B)\psi_0 \mid u\in \radon{[0,T]}, |u|([0,T])\leq 1, (s,t)\in \Delta_{[0,T]}\right\},
\]
is totally bounded for the $\X$ topology for any integer $n$. This will be done by iteration on $n\in \N\cup\{0\}$:
\begin{itemize}
 \item For $n=0$, $W^u_0(t,s)\circ (1+u(\{s\})B)\psi_0=e^{(t-s)A}\circ (1+u(\{s\})B)\psi_0$ and since $\Delta_{[0,T]}$ is compact and $|u(\{s\})|\leq 1$, the strong continuity in time of the semigroup provides the compactness of $\mathcal{W}_0^{T}$.
 \item For any integer $n$, we now assume $\mathcal{W}_n^{T}$ is totally bounded. The map 
 $$
 (\tau,t,\psi)\in \Delta_{[0,T]}\times \X\mapsto e^{(t-\tau)A}  B \psi\in \X,
 $$
 is  continuous. So 
\[
\mathcal{Z}_n^{T} :=\left\{e^{(t-\tau)A}BW^u_n(\tau,s)\circ (1+u(\{s\})B)\psi_0 \mid u\in \radon{[0,T]}, |u|([0,T])\leq 1, (s,\tau)\in \Delta_{[0,T]}, (\tau,t)\in \Delta_{[0,T]}\right\}
\]
is totally bounded.

 For any $\delta>0$, there exist $\psi_1,\dots,\psi_{N_\delta}$ in $\X$ such that 
 \[
  \mathcal{Z}_n^{T}\subset \cup_{j=1}^{N_\delta} B_{\X}(\psi_j,\delta).
 \]
 
 Let $\phi_1,\dots,\phi_{N_\delta}$ be a partition of the unity in $\mathcal{Z}_n^{T}$ such that $\mathrm {supp}~ \phi_j\subset \overline{B_\X(\psi_j,2\delta)}$ and $\pi : \psi\in\X\mapsto \sum_{j=1}^{N_\delta} \psi_j \phi_j(x)$.
 
 Define $p^u_n(t,\tau,s):=\pi(e^{(t-\tau)A}BW^u_n(\tau,s)\circ (1+u(\{s\})B)\psi_0)$ and $\phi^u_{n,j}(t,\tau,s):=\phi_j(e^{(t-\tau)A}BW^u_n(\tau,s)\circ (1+u(\{s\})B)\psi_0)$, then 
 \[
  p^u_n(t,\tau,s)=\sum_{j=1}^{N_\delta} \psi_j \phi^u_{n,j}(t,\tau,s)
 \]
 and
 \[
  \|e^{(t-\tau)A}BW^u_n(\tau,s)\circ (1+u(\{s\})B)\psi_0-p^u_n(t,\tau,s)\|\leq 2\delta.
 \]
Thus $\mathcal{W}_{n+1}^{T}$ is totally bounded if
 \[
  \mathcal{P}_n^{T}:=\left\{\int_{(s,t)} p^u_n(t,\tau,s) u(\tau) \mathrm{d}\tau, u\in \radon{[0,T]}, |u|([0,T])\leq 1, (s,\tau)\in \Delta_{[0,T]}, (\tau,t)\in \Delta_{[0,T]}\right\}
 \]
is totally bounded. Since for $u\in \radon{[0,T]}$, $|u|((0,T])\leq 1$, $(s,\tau)\in \Delta_{[0,T]}$ and  $(\tau,t)\in \Delta_{[0,T]}$
\[
 \int_{(s,t)} p^u_n(t,\tau,s) \mathrm{d}u(\tau) =\sum_{j=1}^{N_\delta} \psi_j  \int_{(s,t)} \phi^u_{n,j}(t,\tau,s)  \mathrm{d}u(\tau)
\]
and
\[
 \left|\int_{(s,t)} \phi^u_{n,j}(t,\tau,s) \mathrm{d}u(\tau)\right|\leq |u|((0,T])\leq 1
\]
this implies $\mathcal{P}_n^{T}$ is relatively compact (and thus totally bounded).
\end{itemize}
This concludes the iteration. We thus have the relative compactness of 
\[
 \left\{\Xi^u_{t,s}\psi_0 \mid u\in \radon{[0,T]}, |u|([0,T])\leq 1, (s,t)\in \Delta_{[0,T]}\right\},
\]
concluding the proof.
\end{proof}

\begin{corollary}\label{COR_int_ens-attain_B_borne} Let $A$ be the generator of a strongly continuous semigroup on $\X$, let  $B$ be bounded and denote by $\Xi$ the propagator defined in Theorem \ref{Thm:ExistenceBoundedPotential}. Then for every $\psi_0$ in $\X$, the set
$${\mathcal Att}^{\Xi}_{\mathcal R}(\psi_0):=\bigcup_{T\geq 0} \bigcup_{u \in \radon{[0,T]}} 
\{ \Xi^u_{t,0}\psi_0| t\in [0,T]\}$$ is 
contained in a countable union of compact subsets of $\X$.
\end{corollary}
\begin{proof}
The proof is a consequence of Theorem~\ref{Thm:ExistenceBoundedPotential} and follows the idea of the proof of Corollary~\ref{Cor:NoExactControllability}.
\end{proof}

We are now ready to prove Proposition~\ref{PRO_introduction_BMS_Bborne}.

\begin{proof}[Proof of Proposition \ref{PRO_introduction_BMS_Bborne}] 
As already mentioned, the well-posedness result is classical (see \cite{BMS} for instance), while the property of the attainable set with $L^1$ controls follows from Corollary 
\ref{COR_int_ens-attain_B_borne}. 
\end{proof}

\begin{remark}\label{Rem:RemarkBoundedOperator}
If $\X$ is an Hilbert space $\Hc$, $A$ is skew-adjoint, $B$ is bounded in $D(|A|^{k/2})$, then $D(|A|^{k/2})$ can be considered in stead of
$\X$ in the whole analysis of the present section. This leads to results similar to the ones presented in  Section~\ref{Sec:HigherOrderFEPS} on the mild coupling, but in a simpler way.
\end{remark}

\subsection{On the notion of solution in the Radon framework}\label{sec:nsrf}
Theorem~\ref{Thm:ExistenceBoundedPotential} does not deal with the continuity with respect to the control $u$.
With a Dirac measure $\delta_{t_0}$, $t_0\in(0,T]$, it turns out that the solution built here is 
\begin{equation}\label{Eq:DeltaDyson}
\Xi^u_{t,s}\psi_0=e^{(t-s)A}\psi_0+e^{(t-t_0)A}Be^{(t_0-s)A}\psi_0 
 \mathbb{I}_{[s,t)}(t_0).
\end{equation}
This does not coincide with the generalized propagator in Definition~\ref{def:propagator-radon} even if the framework is  similar, for instance, as in Remark~\ref{Rem:RightContinuity}, when $A=0$ as the latter is
\[
 \Upsilon^u_{t,s}\psi_0=e^{B \mathbb{I}_{(s,t]}(t_0)}\psi_0.
\]
Both expansions coincide only up to the first order term in the control. This discrepancy is due to the lack of continuity of the cumulative function of the control. 

If we restrict the analysis to controls with continuous cumulative functions and set the topology to the one of the total variation, the continuity is restored and both constructions thus coincide. 

As a consequence the propagator in Theorem~\ref{Thm:ExistenceBoundedPotential} is not continuous in $u$ and it is not the sequential extension of the corresponding propagator for, say, controls with continuous cumulative functions when the notion of convergence is the one we choose for $\radon{[0,T]}$.  
This also implies that the accumulations of the compact set
\[
  \left\{\Xi^u_{t,s}\psi_0 \mid u\in \radon{[0,T]}, t\mapsto u((0,t]) \mbox{ is continuous }, |u|((0,T])\leq 1, (s,t)\in \Delta_{[0,T]}\right\},
\]
are not necessarily given by values of the propagator in Theorem~\ref{Thm:ExistenceBoundedPotential} and thus actual solutions but, instead, values of the propagator in the sense of Definition~\ref{def:propagator-radon}.

\subsection{Noninvariance of the domain}

In this section, we consider the invariance of the domain of $A$, in the framework of Theorem~\ref{Thm:ExistenceBoundedPotential}, by $\Xi^u$ when $u$ is in $L^1([0,T],\R)$. Notice that if $u$ is in $L^1([0,T],\R)$, $\Upsilon^{u}$ and $\Xi^{u}$   coincide whenever Assumption~\ref{ass:Interacting} is fulfilled and, hence, the invariance of the domain follows from Theorem~\ref{thm:kato}. The question is whether this remains true when $B$ is bounded but the corresponding $C^0$-semigroup does not preserve $D(A)$. The answer is negative and we provide a counter-example. 

Let $\X=L^2(\R)$, $A=\partial_x$ with $D(A)=H^1(\R)$ and $B=\rmi w$ for some bounded measurable function $w$. This provides a controlled transport equation and the corresponding solution of \eqref{EQ_bilinear_abstrait} with $u\in L^1(\R)$ is given by
\[
 \Xi^u_t(\psi_0)(x)=e^{\frac{\rmi}2\int_{-t-x}^{t-x}u(\frac{t-x+\tau}2)w(\frac{t+x-\tau}2)\,\mathrm{d}\tau}\psi_0(t+x)
\]
which rewrites as
\[
 \Xi^u_t(\psi_0)(x)=e^{\rmi\int_{x}^{t+x}u(t-s)w(s)\,\mathrm{d}s}\psi_0(t+x).
\]
Set $w=\mathbb{I}_{[0,+\infty)}$, then for $t\geq 0$ and $x\geq 0$
\[
 \Xi^u_t(\psi_0)(x)=e^{\rmi\int_{[-x,t-x]}u(s)\,\mathrm{d}s}\psi_0(t+x).
\]
For fixed time $t$, the function $x\mapsto e^{\rmi\int_{[-x,t-x]}u(s)\,\mathrm{d}s}$ is absolutely continuous and the distributional derivative of $x\mapsto \Xi^u_t(\psi_0)(x)$ is given by
\[
 \Xi^u_t(\psi_0)(x)=e^{\rmi\int_{[-x,t-x]}u(s)\,\mathrm{d}s}\left(\psi_0'(t+x)+\rmi(u(-x)-u(t-x))\psi_0(t+x)\right),
\]
for $t>0$ and $x>0$.

If $\psi_0$ is in $H^1(\R)$ then $\Xi^u_t(\psi_0)$ is in $H^1(\R)$ if and only if  
\[
v:x\mapsto (u(-x)-u(t-x))\psi_0(t+x)
\]
is in $L^2(\R)$. 

Set $u:t\mapsto |1-t|^{-1/2}$, which is integrable but not square integrable, and $\psi_0$ a smooth compactly supported function equal to $1$ in $[1-\varepsilon,1+\varepsilon]$, for some $\varepsilon\in (0,1/2)$. Consider $t\in [1-\varepsilon/2,1+\varepsilon/2]$, then $x\mapsto \psi_0(t+x)$ is equal to $1$ on $[1-t-\varepsilon,1-t+\varepsilon]\subset [-\frac32\varepsilon,\frac32\varepsilon]\subset [-3/4,3/4]$. Hence $-1\not\in [1-t-\varepsilon,1-t+\varepsilon]$. While $[-\varepsilon/2,\varepsilon/2]\subset [1-t-\varepsilon,1-t+\varepsilon]$ and $x=t-1\in [-\varepsilon/2,\varepsilon/2]$.  This implies that $v$ is not square integrable on  $[1-t-\varepsilon,1-t+\varepsilon]$ for any $t\in [1-\varepsilon/2,1+\varepsilon/2]$.

\section{Examples}\label{sec:example}

Most of the examples of bilinear control systems \eqref{eq:main} encountered in the literature, 
also the ones not related to quantum control,  deal with bounded control operator $B$. 
Proposition \ref{PRO_introduction_BMS_Bborne} applies and allows, 
for instance, to complete the studies of the rod equation with clamped ends made in 
\cite[Section 6, Example 4]{BMS} and \cite{BeauchardBeam}.
In the following, we focus on examples arising in quantum control.

\subsection{Quantum systems with smooth potentials on compact manifolds}\label{SEC_smooth_potentials}

The following example motivates the present analysis because of its physical relevance. We
consider $\Omega$ a compact Riemannian manifold endowed with the associated
Laplace-Beltrami operator $\Delta$ and the associated measure $\mu$. 
For $r$ a positive real, $D(|\Delta|^{\frac{r}2})=H^{r}(\Omega,\mathbf{C})$. Since $\Omega$ is compact, for $r>r'$, 
 $D(|\Delta|^{\frac{r}2})\subset D(|\Delta|^{\frac{r'}2})$ is a compact embedding.

Let $k\in\N$, let $V,W:\Omega
\to \mathbf{R}$ be two functions  of class $C^{2(k-1)}$, and consider bilinear quantum system 
\begin{equation}\label{EQ-blse-compact-manifold}
 \mathrm{i}\frac{\partial \psi}{\partial t}=\Delta \psi + V \psi +u(t) W \psi.
\end{equation}
Following the notations of Section~\ref{sec:abstract-framework}, 
$\Hc=L^2(\Omega,\mathbf{C})$ is endowed with the Hilbert product $\langle f,g \rangle
=\int_{\Omega} \bar{f} g \mathrm{d}\mu$, $A=-\mathrm{i}(\Delta+V)$, and
$B=-\mathrm{i}W$. As $V$ is continuous and so bounded, $A$ has a spectral gap. 
Up to substracting a sufficiently large constant, we can assume that $A$ is positive and invertible. 

For  a positive real $r$ with $r\leq 2k$, $D(|A|^{\frac{r}2})=H^{r}(\Omega,\mathbf{C})$. 
Since $B$ is bounded from $D(|A|^{\frac{s}{2}})$ to $D(|A|^{\frac{s}{2}})$, for $s$ a postive real with $s\leq 2(k-1)$, $(A,B,\R)$ satisfies 
Assumption~\ref{ass:ass} and $(A,B)$ is $s$-mildly coupled by Proposition \ref{Prop:ThmLumerPhillips}.

In particular, the two notions of propagators $\Upsilon$ and $\Xi$ defined in Proposition~\ref{prop:extensionBVRadon} and Theorem~\ref{Thm:ExistenceBoundedPotential} respectively can be used and we have the following statement. 
 \begin{proposition}\label{prop:smoothpotentials}
For every $T>0$, 
for every $\psi_0$ in $H^{2(k-1)}(\Omega,\mathbf{C})$, the sets
$$
\bigcup_{\alpha \geq 0} 
\bigcup_{T\geq 0} 
\bigcup_{u \in \radon{[0,T]}} \{ \alpha \Upsilon^u_t\psi_0\mid t\in [0,T]\},
$$
and
$$
\bigcup_{\alpha \geq 0} 
\bigcup_{T\geq 0} 
\bigcup_{u \in \radon{[0,T]}} \{ \alpha \Xi^u_t\psi_0 \mid t\in [0,T]\}
$$
are contained in countable unions of compact subsets of
$H^{2(k-1)}(\Omega,\mathbf{C})$ and, in particular, they have dense complement in $H^{2(k-1)}$. 

For any $\varepsilon\in (0,1)$, if $\psi_0$ in $H^{2(k-\varepsilon)}(\Omega,\mathbf{C})$, the set
\[
 \bigcup_{\alpha \geq 0} 
\bigcup_{T\geq 0} 
\bigcup_{u \in BV([0,T],\R)} \{ \alpha \Upsilon^u_t\psi_0 \mid t\in [0,T]\}\\
\]
is contained in a countable union of compact subsets of
$H^{2(k-\varepsilon)}(\Omega,\mathbf{C})$ and, in particular, it has dense complement in $H^{2(k-\varepsilon)}(\Omega,\mathbf{C})$. 
\end{proposition}
\begin{proof}
The first statement is an adaptation of Proposition~\ref{rem:compactness} and Corollary~\ref{Cor:NoExactControllability}, see Remark \ref{Rem:CompactnessRadonMidlyCoupled}.
The second statement follows from Corollary~\ref{COR_int_ens-attain_B_borne}.
The last statement is a consequence of Corollary~\ref{Cor:NoExactControllability2}.
\end{proof}
Notice that from the compactness of the Sobolev embeddings and the conservation of the regularity we can deduce a  result weaker than Proposition~\ref{prop:smoothpotentials} such as the fact
\[
 \bigcup_{\alpha \geq 0} 
\bigcup_{T\geq 0} 
\bigcup_{u \in \radon{[0,T]}} \{ \alpha \Upsilon^u_t\psi_0 \mid t\in [0,T]\}
\]
is contained in a countable union of totally bounded sets of
$H^{2(k-1)-\delta}$ for any $\delta \in (0,1)$ whenever $\psi_0$ in $H^{2(k-1)}$.

 \subsection{Potential well with dipolar interaction}\label{Sec:PotentialWell}
 
  In this example, $\Omega=(0,\pi)$ is endowed with the standard Lebesgue
 measure, $V$ is the constant zero function and $W$ is some function of class $C^k$, for some integer $k$. This academic
 example  is a simplification of the harmonic oscillator,  presented in 
 Section~\ref{SEC_harmonic_oscillator} below, in the sense that $\Omega$ is bounded. It
 has been thoroughly studied by K. Beauchard in~\cite{beauchard,camillo}. These works give the first (and, at this
 time, almost the only one) satisfying description of the reachable set  with $L^2$ controls from the
 first eigenvector for systems of the type of \eqref{eq:blse}.
 Using Lyapunov techniques, V. Nersesyan~\cite{nersesyan} gave practical algorithms  for
 approximate controllability.

  Equation \eqref{eq:blse} writes
  \begin{equation}\label{EQ_potential_well}
  \rmi \frac{\p \psi}{\p t} = - \frac 1 2 \frac{\p^{2} \psi}{\p x^{2}} - u(t)W(x)
 \psi
  \end{equation}
  with boundary conditions $\psi(0)=\psi(\pi)=0$.
  
   The linear operators $A=\frac{\rmi}{2} \Delta$ defined on $D(A)=(H^2\cap
 H^1_0)((0,\pi),\C)$ and $B:\psi\mapsto \rmi W \psi$ are skew symmetric in
the
 Hilbert space $\mathcal{H}=L^2(\Omega,\C)$ endowed with the hermitian product 
  $L^2(\Omega,\C),$
  $$\langle f,g \rangle =\int_{0}^{\pi}\overline{f(x)}g(x)
 \mathrm{d}x.$$
  
  Define, for every $k$ in $\N$,
  $$
  \phi_k:x\mapsto  \sqrt{\frac{{2}}{\pi}} \sin (k x),
   $$
  the family $\Phi=(\phi_k)_{k \in \N}$ is an orthonormal basis of $\Hc$ made of
 eigenvectors of $A$.

The triple $(A,B,\R)$ satisfies Assumption~\ref{ass:ass}. 

Classical results  of interpolation~\cite[Chapter 1]{Lions_Magenes} allow to find the domain of fractional derivative operators. In particular, for any $k$ in $\mathbf{N}$ and $0\leq s <1$, we get following (\cite{Nochetto_Otarola_Salgado}): 
$$\begin{array}{lcll}
D(|A|^k)&=&\{\psi \in H^{2k} | \psi^{[2l]}(0)=\psi^{[2l]}(\pi)=0, l=0,\dots,k-1\}& \mbox{ for } k \in \mathbf{N} \\
D(|A|^{k+s})&=&D(|A|^k) \cap H^{2s} & \mbox{ for } s<1/4\\
D(|A|^{k+\frac{1}{4}})&=&\{\psi \in D(|A|^k)| |A|^k \psi \in H^{\frac{1}{2}}_{00}\} \\
D(|A|^{k+s})&=&\{\psi \in D(|A|^k)| |A|^k \psi \in H_0^{2s}\} & \mbox{ for } 1/4<s<1/2\\
D(|A|^{k+s})&=&\{\psi \in D(|A|^k)| |A|^k \psi \in H^{2s}\cap H^1_0\} & \mbox{ for } 1/2\leq s\leq 1
\end{array} 
$$
where 
$$H^{\frac{1}{2}}_{00}=\left \{ \psi \in H^{\frac{1}{2}} \Big | \int_0^{\pi}\!\!\! \psi^2(x) \frac{\mathrm{d}x}{\sin(x)} <+\infty \right \}$$ is the Lions-Magenes space.

\begin{lemma}\label{LEM_reg_W_puits_potentiel}
Let $p$ in $\N \cup \{0\}$, $W:[0,\pi]\to \R$ be $C^3 \cap C^{2p+1}$. If $p>0$, assume moreover that that $W^{(2l+1)}(0)=W^{(2l+1)}(\pi)=0$ for $l=0,\dots, p-1$. Then $B$ is bounded from $D(|A|^{a})$ to $D(|A|^{a})$ for every $a<p+1 +\frac{1}{4}$. 
\end{lemma}

\begin{proof}
Since $W$ is $C^{2p+1}$, $B$ leaves invariant $H^s$ for every $s\leq 2p+1$.  If $a$ is an integer less than or equal to $p+1$, the result follows from the Leibniz rule, using the vanishing of the derivatives of odd orders less than $2p$ (if any) of $W$ on the boundary of $[0,\pi]$. The result for $a-\lfloor a \rfloor< 1/4$ follows directly from the equalities above with no additional boundary conditions to check.
\end{proof}

 Theorem 3.6 in \cite{BMS} by Ball, Marsden and Slemrod  implies (see~\cite{turinici}) that equation \eqref{eq:blse} is not
 controllable in (the Hilbert unit sphere of) $L^{2}(\Omega)$ when 
 $\psi\mapsto W \psi$ is bounded in $L^2(\Omega)$.
 Moreover, in the case in which  $\Omega$ is a domain of $\R^n$ and $W:\Omega\to \R$ is $C^2$, if
 the control $u$ belongs to $L^p([0,+\infty),\R)$ with $p>1$, then
 equation~\eqref{eq:blse}
 is neither controllable in the Hilbert sphere $\mathbf{S}$ of $L^2(\Omega)$ nor
 in the natural functional space  where the problem is formulated, namely the
 intersection of  $\mathbf{S}$ with the Sobolev spaces $H^2(\Omega)$ and
 $H^1_0(\Omega)$.

The fact that the present system is not more than $5/2$-{mildly coupled} is the purpose of the following lemmas.
\begin{lemma}\label{Lem:IPP}
Let $k\in \N\cup\{0\}$. Let $F:[0,\pi]\to \R$ be of class $C^{2k+3}$ with
$|F^{(2k+1)}(\pi)|+|F^{(2k+1)}(0)|\neq 0$ 
and, 
if $k\neq 0$, 
$F^{(2j+1)}(0)=F^{(2j+1)}(\pi)=0$ for $j=0,\ldots,k-1$.
Then
$F \phi_1$ is not in $D(|A|^{a})$ if $a\geq k+\frac54$ .
\end{lemma}

\begin{proof}
Consider, for any integer $n$, the following quantity
\[
I_n(F):=\frac\pi2\langle F\phi_1,\phi_n\rangle=\int_0^\pi F(x)\sin(x)\sin(n x)\,\mathrm{d}x.
\]
Then we have that $I_n(F)=\frac12 ( J_{n-1}(F)-J_{n+1}(F))$ with
\begin{align*}
J_\ell(F)&:=\int_0^\pi F(x)\cos(\ell x)\,\mathrm{d}x=-\frac1{\ell}\int_0^\pi F'(x)\sin(\ell x)\,\mathrm{d}x\\
&=\frac1{\ell^2}\left((-1)^\ell F'(\pi)-F'(0)\right)-\frac1{\ell^2}J_{\ell}(F'').
\end{align*}
Now assume that $F^{(2j+1)}(0)=F^{(2j+1)}(\pi)=0$ for $j=0,\ldots,k-1$, hence
\[
J_\ell(F)=\frac1{\ell^{2k+2}}\left((-1)^\ell F^{(2k+1)}(\pi)-F^{(2k+1)}(0)\right)-\frac1{\ell^{2k+2}}J_{\ell}(F^{(2k+2)}).
\]
It follows that
\begin{align*}
 I_n(F)&=\frac12 \left(\frac1{(n-1)^{2k+2}}-\frac1{(n+1)^{2k+2}}\right)\left(-(-1)^nF^{(2k+1)}(\pi)-F^{(2k+1)}(0)\right)\\ 
& -\frac12\frac1{(n-1)^{2k+2}}J_{n-1}(F^{(2k+2)})+\frac12\frac1{(n+1)^{2k+2}}J_{n+1}(F^{(2k+2)})\\
&=\frac12 \left(\frac1{(n-1)^{2k+2}}-\frac1{(n+1)^{2k+2}}\right)\left(-(-1)^nF^{(2k+1)}(\pi)-F^{(2k+1)}(0)\right)\\ 
& +\frac12\frac1{(n-1)^{2k+3}}\int_0^\pi F^{(2k+3)}(x)\sin((n-1)x)\,\mathrm{d}x-\frac12\frac1{(n+1)^{2k+3}}\int_0^\pi F^{(2k+3)}(x)\sin((n+1)x)\,\mathrm{d}x
\end{align*}
As
\[
 \frac1{(n-1)^{2k+2}}-\frac1{(n+1)^{2k+2}}=\frac{(n+1)^{2k+2}}{(n^2-1)^{2k+2}}\left(1-\left(\frac{n-1}{n+1}\right)^{2k+2}\right)\underset{n\to\infty}{\sim} \frac{4k+4}{n^{2k+3}} 
\]

If $|F^{(2k+1)}(\pi)|+|F^{(2k+1)}(0)|\neq 0$, then either $F^{(2k+1)}(\pi)-F^{(2k+1)}(0)\neq 0$ or $F^{(2k+1)}(\pi)+F^{(2k+1)}(0)\neq 0$,
 and, due to Riemann--Lebesgue Lemma, 
\begin{itemize}
 \item if $F^{(2k+1)}(\pi)+F^{(2k+1)}(0)\neq 0$ then 
\[
 I_{2n}(F)\underset{n\to\infty}{\sim} -\frac{2k+2}{(2n)^{2k+3}}\left(F^{(2k+1)}(\pi)+F^{(2k+1)}(0)\right),
\]
and hence, $(n^{2a}I_n(F))_{n\in\N}$ is not square integrable if $2a-2k-3\geq -\frac12$ and consequently $F \phi_1$ is not in $D(|A|^{a})$ if $a\geq k+\frac54$ 
\item if $F^{(2k+1)}(\pi)-F^{(2k+1)}(0)\neq 0$ then 
\[
 I_{2n+1}(F)\underset{n\to\infty}{\sim} \frac{2k+2}{(2n+1)^{2k+3}}\left(F^{(2k+1)}(\pi)-F^{(2k+1)}(0)\right)
\]
and similarly $F \phi_1$ is not in $D(|A|^{a})$ if $a\geq k+\frac54$.\qedhere
\end{itemize}
\end{proof}

\begin{lemma}\label{Lem:DomainStability}
Let $k\in \N\cup\{0\}$. Let $W:[0,\pi]\to \R$ be of class $C^{2k+3}$ with 
$|W^{(2k+1)}(\pi)|+|W^{(2k+1)}(0)|\neq 0$ 
and, if $k\neq 0$, 
$W^{(2j+1)}(0)=W^{(2j+1)}(\pi)=0$ for $j=0,\ldots,k-1$.
Then for every $a$ in $(0,+\infty)$, 
\[ 
e^{\mathrm{i}W}\phi_1 \in D(|A|^a) \Leftrightarrow a<\frac{5}{4}+k.
\]
\end{lemma}
\begin{proof}
 Set $F= e^{{\mathrm i}W}$ and recall Fa\`a di Bruno formula
\[
(e^{\mathrm{i}W})^{(n)}(x)=\sum \frac{n!}{m_1!\,1!^{m_1}\,m_2!\,2!^{m_2}\,\cdots\,m_n!\,n!^{m_n}} e^{\mathrm{i}W(x)}\prod_{j=1}^n\left({(\mathrm{i}W)}^{(j)}(x)\right)^{m_j}, 
\]
where the sums is over the $n$-uplets $(m_1,\ldots, m_n)$ in $\N\cup\{0\}$ such that: $1m_1+2m_2+3m_3+\cdots+nm_n=n$.

If $n$ is odd and $(m_1,\ldots, m_n)$ is an $n$-uplets of $\N\cup\{0\}$ such that $1m_1+2m_2+3m_3+\cdots+nm_n=n$ there exists $\ell$ such that $2\ell+1\leq n$ and $m_{2\ell+1}\neq 0$. It follows
that $F:[0,\pi]\to \R$ is of class $C^{2k+3}$ with $F^{(2j+1)}(0)=F^{(2j+1)}(\pi)=0$ for $j=0,\ldots,k-1$ and $|F^{(2k+1)}(\pi)|+|F^{(2k+1)}(0)|\neq 0$.

Then the conclusion follows from Lemma~\ref{LEM_reg_W_puits_potentiel} and Lemma~\ref{Lem:IPP}.
\end{proof}

We sum up our results in the following
\begin{proposition}
Let $k\in \N\cup\{0\}$. Let $W:[0,\pi]\to \R$ of class $C^{2k+3}$ with 
$|W^{(2k+1)}(\pi)|+|W^{(2k+1)}(0)|\neq 0$ 
and, if $k\neq 0$, 
$W^{(2j+1)}(0)=W^{(2j+1)}(\pi)=0$ for $j=0,\ldots,k-1$. 
Then
\begin{align*}
{\mathcal Att}_{\mathcal R}(\phi_1)&= \bigcup_{T\geq 0}
 \bigcup_{u \in \mathcal{R}([0,T])}\{\Upsilon^u_{t,0}\phi_1|0\leq t \leq T\}\subset \bigcap_{s<\frac{5}{4}+k} D(|A|^s),\\
{\mathcal Att}^\Xi_{\mathcal R}(\phi_1)&= \bigcup_{T\geq 0}
 \bigcup_{u \in \mathcal{R}([0,T])}\{\Xi^u_{t,0}\phi_1|0\leq t \leq T\}\subset \bigcap_{s<\frac{5}{4}+k} D(|A|^s),
\end{align*}
and both attainable sets are contained in a countable union of relatively compact subsets of $D(|A|^s)$, for any $s<\frac{5}{4}+k$.

Moreover, we have
\begin{equation*}
{\mathcal Att}_{\mathcal R}(\phi_1) \not \subset  D(|A|^{\frac{5}{4}+k})\quad\mbox{ and }\quad
{\mathcal Att}^\Xi_{\mathcal R}(\phi_1) \not \subset  D(|A|^{\frac{5}{4}+k}). 
\end{equation*}
\end{proposition}
Recall that $\Upsilon$ is defined in Proposition~\ref{prop:extensionBVRadon} and $\Xi$ in Theorem~\ref{Thm:ExistenceBoundedPotential}.
\begin{proof}
From Lemma~\ref{Lem:IPP}, $B$ is bounded from $D(|A|^{a})$ to $D(|A|^{a})$ for every $a<p+\frac{5}{4}$ and hence $(A,B)$ is $a$-{mildly coupled}, for every $a<p +\frac{5}{4}$, by Proposition \ref{Prop:ThmLumerPhillips}. Then Proposition~\ref{prop:0987} provides the first statement. While following Remark~\ref{Rem:RemarkBoundedOperator}, Theorem~\ref{Thm:ExistenceBoundedPotential} provides the second statement.

The relative compactnesses follow from Proposition~\ref{PRO_cont_entree_sortie_Hk} (similarly to Proposition~\ref{rem:compactness}) and Theorem~\ref{Thm:ExistenceBoundedPotential}, respectively.

From \eqref{Eq:InteractionFramework}, with $u=\pi \delta_{t_0}$ for some $t_0>0$ and Lemma~\ref{Lem:DomainStability} we deduce the first noninclusion statement. From \eqref{Eq:DeltaDyson} and Lemma~\ref{Lem:IPP}, we deduce the last assertion.
\end{proof}

\begin{remark}
Notice that \cite[Theorem 2]{camillo} states the exact controllability of \eqref{EQ_potential_well} in $D(|A|^{\frac{5}{2}})$ with $H^1_0$ controls and $W:x\mapsto x^2$. While Proposition \ref{PRO_cont_entree_sortie_Hk-BV} implies nonexact controllability of 
\eqref{EQ_potential_well} in $D(|A|^{s})$, $s<\frac{9}{4}$, with BV controls for example with $W:x\mapsto x^2$. Whether this $1/4$ discrepancy is optimal is still an open question.

Similarly \cite[Theorem 1]{camillo} states the exact controllability of \eqref{EQ_potential_well} in $D(|A|^{\frac{3}{2}})$ with $L^2$ controls and $W:x\mapsto x$. While Proposition~\ref{PRO_cont_entree_sortie_Hk} states the nonexact controllability of 
\eqref{EQ_potential_well} in $D(|A|^{s})$, $s<\frac{5}{4}$, with Radon controls and $W:x\mapsto x^2$. But this time, from  Proposition~\ref{PRO_cont_entree_sortie_Hk}, we deduce that the $1/4$ discrepancy is optimal.
\end{remark}

From \cite{Schrod2}, we know that $\{(k,k+1)|k\in \mathbf{N}\} $ is a nondegenerate chain of connectedness 
for $(A+\eta B,B)$ for almost every real $\eta$. 
Hence Proposition \ref{prop:controlHsFEPS} guarantees  the approximate controllability of the system 
(\ref{EQ_potential_well}) from $\phi_1$ in $D(|A+\eta B|^{r})=
D(|A|^{r})$, for $\frac{3}{2}<r<\frac{5}{4}+1$. The global exact controllability in 
$D(|A|^\frac{3}{2})$ (inside the unit sphere) with explicit 
controls follows from Proposition~\ref{prop:controlHsFEPS}, in order to reach a neighborhood of the target in $D(|A|^{r})$, for $\frac{3}{2}<r<\frac{5}{4}+1$ (see for instance \cite{ACC_QG}). It is then enough to concatenate the dynamics with $L^2$ controls given by \cite{camillo} for exact local controllability. This explicit construction provides estimates on control time and norms, see~\cite{duca:hal-01520173}.

 \subsection{Quantum harmonic oscillator}\label{SEC_harmonic_oscillator}
 
In this section, we present an example of $s$-{mildly coupled} system, for any $s>0$, with an unbounded control potential, in contrast with the examples in the previous sections.
 
 The quantum harmonic oscillator with angular frequency $\omega$ describes the
 oscilations of a particle of mass $m$ subject to the potential
 $V(x)=\frac{1}{2}m \omega x^2$.
 The corresponding uncontrolled Schr\"odinger equation is
 $$
 \rmi \frac{\partial \psi}{\partial t}=-\frac{\hbar^2}{2m} \Delta \psi(x,t)
 + \frac{1}{2}m \omega x^2 \psi(x,t).
 $$
 With a suitable choice of units, it reads
 $$
 \rmi \frac{\partial \psi}{\partial t}=-\frac{1}{2} \Delta \psi(x,t) +
 \frac{1}{2}
 x^2 \psi(x,t).
 $$
 The operator $A=\displaystyle{\frac{\rmi}{2}\Delta-\frac{x^2}{2}}$ is
 self-adjoint
 on $L^2(\mathbf{R},\mathbf{C})$ and it has
 a pure discrete spectrum.
 The $k^{th}$ eigenvalue (corresponding to the $k^{th}$ energy level)  is equal
 to $\displaystyle{\frac{2k+1}{2}\mathrm{i}}$ and it is associated with the eigenstate
 $$
 \phi_k:x\mapsto \frac{1}{\sqrt{2^k k!\sqrt{\pi}}} \exp \left ( -\frac{x^2}{2} \right
 ) H_k(x),
 $$
 where $H_k$ is the $k^{th}$ Hermite polynomial, namely
 $\displaystyle{ H_k(x)=(-1)^k e^{x^2} \frac{d^k}{dx^k} \left ( e^{-x^2} \right ).}$

 When considering the classical dipolar interaction, the control potential $W$
 takes the form $W(x)=x$ for every $x$ in $\R$. It is well known (see
 \cite{Rouchon} and references therein)  that the resulting control system
 \eqref{eq:blse} is not controllable in any reasonable sense. Indeed the system
 splits in two uncoupled subsystems. The first one is a finite dimensional
 classical harmonic oscillator which is controllable. The second one is a free
 (that is, without control) quantum harmonic oscillator, whose evolution does
not
 depend on the control and  is therefore not controllable.
 
In~\cite[Section IV]{BoussaidCaponigroChambrion}, we show that $(\rmi(-\Delta+V),\rmi W)$ is $s$-{mildly coupled} for every $s>0$. 
 The proof given in~\cite{Rouchon, illner} (and especially the
 decomposition of the system in two decoupled systems) does not require more to
 the control than to be the derivative of a derivable function. Using the
 continuity in Proposition~\ref{prop:extensionBVRadon}, the noncontrollability result  can  be extended
 to Radon measures.
  \begin{proposition}
 The system~\eqref{eq:blse} with $\Omega=\mathbf{R}$, $V:x\mapsto x^2$ and
 $W:x\mapsto x$ is not approximately controllable by means of Radon measures.
 \end{proposition}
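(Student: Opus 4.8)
The plan is to reduce the statement to the \emph{classical} non-controllability of the forced quantum harmonic oscillator (Rouchon and Illner, \cite{Rouchon, illner}), which is established for sufficiently regular controls, and then to upgrade it to Radon measures by invoking the equivalence theorem Proposition~\ref{PRO_equivalence_L^2}. With the notation of Section~\ref{SEC_harmonic_oscillator} I take $\Hc=L^2(\R,\C)$, $A$ the skew-adjoint oscillator generator with eigenvectors $(\phi_k)_{k\in\N}$ and purely imaginary eigenvalues $\rmi\frac{2k+1}{2}$, and $B=-\rmi x$, so that \eqref{eq:blse} reads $\psi'=A\psi+uB\psi$. The whole difficulty is organizational: one must (i) check that $(A,B)$ lies in the scope of Proposition~\ref{PRO_equivalence_L^2}, and (ii) feed into it the known non-controllability for piecewise constant controls.

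First I would verify the hypotheses of Proposition~\ref{PRO_equivalence_L^2} for $k=2$. The operator $A$ is skew-adjoint with purely discrete spectrum, hence maximal dissipative with compact resolvent. Since $B=-\rmi x$, the group $e^{tB}$ is multiplication by $e^{-\rmi t x}$, which is unitary (so that $B$ and $-B$ generate contraction semigroups with $c=c'=0$) and leaves $D(A)$ invariant. Because $e^{tB}$ commutes with multiplication operators and conjugates $\partial_x$ into $\partial_x+\rmi t$, one obtains
\[
 e^{tB}Ae^{-tB}=A+t\,C_1+t^{2}\,C_2,\qquad C_1,C_2\in L(D(A),\Hc),
\]
with $C_1$ a multiple of the momentum $-\rmi\partial_x$ and $C_2$ constant; in particular $t\mapsto e^{tB}Ae^{-tB}$ is locally Lipschitz, so $(A,B)$ satisfies Assumption~\ref{ass:StrongInteraction} and, by the remark following Proposition~\ref{prop:0987}, is $2$-\FEPS. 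Moreover $B$ is $A$-bounded with $\|B\|_A=0$: an elementary computation gives $\|B(\lambda-A)^{-1}\|\to 0$ as $\lambda\to+\infty$. This yields the compactness of $B(1-A)^{-1}$, and since $A+uB$ is a \emph{displaced} oscillator (the substitution $x\mapsto x+u$ shifts the potential and adds a scalar), one checks $D(|A+uB|^{2})=D(|A|^{2})$ for every real $u$, which covers the domain-equality hypothesis.

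Next I would recall the decoupling. For a piecewise constant control $u$ the classical forced oscillator $\ddot q+q=u$ with $q(0)=\dot q(0)=0$ has a $C^{1}$ solution $(q,\dot q)$, so the Weyl displacement $D(t)=e^{\rmi(\dot q(t)\,x-q(t)\,(-\rmi\partial_x))}$ is differentiable in $t$; the computation of \cite{Rouchon, illner}, which uses only this $C^{1}$ regularity, then shows that $\tilde\psi(t)=D(t)^{*}\pro^{u}_{t}\psi_0$ solves the \emph{free} oscillator $\tilde\psi'=A\tilde\psi$. Starting from the ground state $\phi_0$ one has $\tilde\psi(t)=e^{tA}\phi_0$, a phase multiple of $\phi_0$, whence every reachable state $\pro^{u}_{t}\phi_0=D(t)\tilde\psi(t)$ is a coherent (displaced Gaussian) state. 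As the coherent states form a finite-parameter family whose $L^2$-closure omits, e.g., $\phi_1$, the system is \emph{not} approximately controllable in the $L^2$ norm (Definition~\ref{DEF_controllability_general} with $X=L^2$) by piecewise constant controls valued in $\{a,b\}$, for any $a<b$.

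Finally I would apply Proposition~\ref{PRO_equivalence_L^2} with $s=0<k=2$: its four notions of approximate controllability coincide, so the failure of approximate controllability by piecewise constant controls valued in $\{a,b\}$ (Assertion~3 of that proposition) forces the failure of approximate controllability by Radon measures (Assertion~4), which is exactly the claim. The main obstacle is not the equivalence step but the two verifications flanking it: confirming that the decoupling of \cite{Rouchon, illner} genuinely applies to piecewise constant controls (the classical trajectory being only $C^{1}$, not $C^{2}$), and establishing $\|B\|_A=0$, which is what makes all the domains $D(|A+uB|^{2})$ coincide and thus renders Proposition~\ref{PRO_equivalence_L^2} applicable for arbitrary control amplitudes.
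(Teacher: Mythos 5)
Your proposal is correct and takes essentially the same route as the paper: the paper's own proof likewise rests on the Rouchon--Illner decomposition of the driven oscillator into a controllable classical part and an uncontrollable free quantum part (valid for controls of low regularity), combined with the equivalence result (Proposition~\ref{PRO_equivalence_L^2}) to transfer the non-controllability from regular controls to Radon measures. The hypothesis checks you carry out (Assumption~\ref{ass:StrongInteraction}, $\|B\|_{A}=0$, compactness of $B(1-A)^{-1}$, equality of the domains $D(|A+uB|^{2})=D(|A|^{2})$) are exactly the verifications the paper leaves implicit in Section~\ref{SEC_harmonic_oscillator}, and they are sound.
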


Although this example is not approximately controllable, any arbitrarily small perturbation of $W$ by some
smooth localized function $W_2$ restores this feature, see~\cite[Proposition 6.4]{Schrod}. Nonetheless, the approximate controllability in arbitrarily small time is not possible, see~\cite{BeauchardCoronTeismann}, recently extended in~\cite{beauchard:hal-01333537}. This does not affect the mild coupling at any order as 
$(A,\rmi W_2)$ is also {mildly coupled} at any order and $W_2$ commutes with $W$ which ensures that $(A,\rmi (W+W_2))$ is $s$-{mildly coupled} for every $s>0$.

Note that existence of the dynamics is obtained in~\cite{Fujiwara} for measurable in time and  locally bounded in space-time control potentials. It can be extended to Radon measures controls using Section~\ref{sec:radon}. Note that in the case of Radon measures without atoms, for instance $L^1$-controls, the resulting propagator is a weak solution of \eqref{Eq:Potential}, see~Proposition~\ref{prop:propagator-radon2} and Remark~\ref{Rem:RightContinuity}.

\appendix

\section{Notations and Definitions}\label{sec:notations}

Here $T$ is a positive real and $I$ an interval of $\R$.

\paragraph{Bounded operators space.} Let $\X$ and $\Y$ be two Banach spaces,
$L(\X,\Y)$ is the space of linear bounded operator acting on $\X$ with values in
$\Y$. If $\X=\Y$ we write $L(\X) := L(\X,\Y)$.

\paragraph{Weak and strong topology.} Let $(A_n)_{n\in\N}$ a sequence in
$L(\X,\Y)$, let $A$ in $L(\X,\Y)$. We say that $A_n$ converges to $A$ in the strong sense, or strongly,
if for any $\psi$ in $\X$, $(A_n\psi)_{n\in\N}$ converges to $A\psi$ in $\Y$. We say that $A_n$
converges to $A$ in the weak sense, or weakly, if for any $\psi$ in $\X$ and $\phi$ in ${\Y}^{\ast}$,
 the topological dual of $\Y$, $(\phi(A_n\psi))_{n\in\N}$ converges to $\phi(A\psi)$ 
 in $\C$.

\paragraph{Maximal dissipative operators on Hilbert spaces.} An operator $A$ on a Hilbert space $\Hc$ is dissipative
if for any $\phi\in D(A)$, $\Re\langle \phi, A\phi\rangle \leq 0$. It is maximal dissipative if it has no proper 
dissipative extension.

\paragraph{Graph topology.} Consider an operator $A$  on a
Hilbert space $\Hc$ with domain $D(A)$, the graph topology on $D(A)$ is the
topology associated with the norm $\psi\in D(A)\mapsto \|\psi\|_\Hc+\|A\psi\|_\Hc \in
[0,\infty)$. 

\paragraph{Bounded variation functions.}
Let $E \subset \X$ for $\X$ Banach space. A family $t\in I \mapsto u(t) \in E $ is in $BV(I,E)$, i.e. is a bounded variation function from the interval $I$ to $E$,
if there exists $N\geq 0$ such that
$$
 \sum_{j=1}^n \|u(t_j)-u(t_{j-1})\|_\X \leq N,
$$
for any partition $(t_{i})_{i=0}^{n}$ of $I$.
The mapping 
$$
u\in BV(I,E) \mapsto \sup_{(t_{i})_{i}}\sum_{j=1}^n
\|u(t_j)-u(t_{j-1})\|_{\X}
$$
is a semi-norm on $BV(I,E)$
denoted by $\TV{\cdot}{I}{E}$ and it is  called \emph{total variation}.

The space $BV(I,E)$ endowed with the norm $\|\cdot \|_{BV(I)} :=\|\cdot\|_{L^1} + \TV{\cdot}{I}{E}$ is a Banach space.

On $BV(I,E)$, we consider  the convergence of sequences given by:
$(u_{n})_{n\in \N} \in BV(I,E)$ \emph{converges} to $u \in BV(I,E)$ if
$(u_{n})_{n\in N} $ is a bounded sequence in $BV(I,E)$ pointwise convergent to $u \in BV(I,E)$.

Notice that the convergence in the norm $\|\cdot \|_{BV(I)}$ implies pointwise convergence.

\noindent The Jordan Decomposition Theorem provides that any bounded variation function is the difference of two nondecreasing bounded functions. This fact, together with Helly's Theorem provides the well-known Helly's Selection Theorem (see for example~\cite{Helly, Natanson}).
\begin{unumberedtheorem}[Helly's Selection Theorem]
Let $I$ be a  compact interval and $(f_n)_{n\in\N}$ be a sequence in $BV(I,\R)$. If
\begin{itemize}
 \item[$(i)$] there exists $M>0$ such that for all $n\in\N$, $\TV{f_n}{I}{\R}<M$,
 \item[$(ii)$] there exists $x_0\in I$ such that $(f_n(x_0))_{n\in\N}$ is bounded.
\end{itemize}
Then $(f_n)_{n\in\N}$ has a pointwise convergent subsequence.
\end{unumberedtheorem}

\paragraph{Radon measures.}
We consider the space $\radon{I}$ of (signed) Radon measures on $I$.  Recall that a positive Radon measure is a Borel measure which is locally finite and inner regular. Using Hahn decomposition~\cite{Doss1980}  
any  signed Radon measure $\mu$ is
 the difference $\mu=\mu^+-\mu^-$
of two positive Radon measures $\mu^+$ and $\mu^-$ (at least one being finite) with disjoint 
support. We denote the total variation of $\mu$  by $|\mu|(I)$, where  $|\mu|=\mu^++\mu^-$. In this work we consider  Radon measures with bounded total variation. In particular both $\mu^+$  and $\mu^-$ are finite.

Here we only consider finite measures on $I$ so the inner regularity requirement in the definition can be dropped. In the more general $\sigma$-finite case, this requirement can be dropped as well. 
In the first case, a positive Radon measures is a finite Borel measures, while in the second case a positive Radon measure is a locally finite Borel measure. 
Note that, sometimes Borel measures are by definition locally finite. Sometimes the outer regularity is added to the definition of Radon measures, which again is redundant for finite measures.

We say that
$(\mu_{n})_{n\in \N} \in \radon{[0,T]}$ \emph{converges} to $\mu \in \radon{[0,T]}$ if
$\sup_{n}|\mu_{n}|([0,T]) < +\infty$ (i.e. $(\mu_{n})_{n\in \N}$ has uniformly bounded total variations) and $\mu_{n}((0,t]) \to \mu((0,t])$ for  every
$t \in (0,T]$ as $n$ tends to $\infty$. 
Note that this convergence is \emph{not} the one associated with the norm of total variation, see also Remark \ref{Rem:NonUniformDensity}.
 Notice, moreover, that this notion of convergence  is stronger than the weak {convergence of} measures, see~\cite[Section 1.9]{EvansGariepy} and weaker than the strong or total variation {convergence}. It is also stronger than the narrow convergence (also called weak  convergence in \cite{Billingsley, Klenke, Mattila}). For instance, the sequence $\left(\delta_{\frac1n}\right)_{n\in\N}$ converges narrowly to $\delta_0$ but is not convergent according to our definition.

The cumulative function $u(t) = \mu((0,t])$ of a Radon measure $\mu$ is locally of bounded variation
and the associated total variation (which does not depend on the choice of the
cumulative function) coincides with the total variation of the Radon measure.

Every function $u\in L^{1}_{\mathrm{loc}}(I,\R)$ can be seen as the density of an absolutely continuous Radon measure $\mu$, namely $\mu(J) = \int_J u \mathrm{d}\lambda$, (where  $\lambda$ denotes the Lebesgue measure) for every $J\subset I$ borelian. When it does not create ambiguity we identify the function $u$ and the
associated Radon measure $\mu$. Moreover we have the following convergence.
\begin{UnnumberedLemma} 
 Let $(u_n)_{n\in\N} \subset L^{1}(I,\R)$ and $u\in L^{1}(I,\R)$ such that $u_{n} \to u$ in $L^{1}(I,\R)$ as $n$ tends to $\infty$. 
 Let $(\mu_n)_{n\in\N} \subset \radon{I}$ and $\mu \in \radon{I}$ be the associated Radon measures.
 Then  $(\mu_n)_{n\in\N}$ converges to $\mu$ in $\radon{I}$.
\end{UnnumberedLemma}

Note that for $u$ in $L^{1}(I,\R)$ the total variation of the associated Radon measure is the $L^1$-norm of $u$ and hence $L^{1}(I,\R)$ is closed for the total variation topology.

\paragraph{Other notations.} For any interval $I \subset \R$, we define
$$
\Delta_{I}:=\{(s,t)\in I^2\mid \, s\leq t\,\}.
$$

\noindent In a metric space $E$, the notation $B_E(v_0,r)$ stands for the open ball of radius $r$ and center $v_0$ in $E$.

\noindent For a densely defined operator $B$ on a Hilbert space, $B^\ast$ stands for its adjoint. 
Recall that $B^\ast$ is densely defined if and only if $B$ is closable, in any case $B^\ast$ is closed.

\noindent The set $C_0^{1}(I,\X)$ is the set of of functions from an interval $I$ to a Banach $\X$ of class $C^1$ with compact support in the interior of $I$.

\section{Interpolation}\label{sec:interpolation}

\subsection{Convergence of sequences}
Through the present analysis, the following simple interpolation lemma is useful.
\begin{lemma}\label{LEM_interpolation}
Let $A$ be a skew-adjoint operator, let $S$ be a set and $(u_n)_{n\in
\N}$ take value in the set of functions from $S$ to $D(|A|^k)$, such that
$(u_n)_{n \in N}$ is uniformly bounded in $S$ for the norm of $D(|A|^k)$, $k >
0$. If $(u_n)_{n\in \N}$ tends to zero in $\Hc$ uniformly in $S$, then
$(u_n)_{n\in \N}$ tends to zero in $D(|A|^l)$, uniformly in $S$ for every $l<k$.
\end{lemma}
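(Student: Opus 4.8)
The plan is to reduce everything to the single spectral interpolation inequality
$$\||A|^l \psi\| \;\leq\; \||A|^k\psi\|^{l/k}\,\|\psi\|^{1-l/k}, \qquad 0\leq l\leq k,\ \psi\in D(|A|^k),$$
and then to feed in the two hypotheses (uniform boundedness in $D(|A|^k)$, uniform convergence to $0$ in $\Hc$) directly.

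First I would establish this inequality. Since $A$ is skew-adjoint, $|A|=(-A^2)^{1/2}$ is a non-negative self-adjoint operator, so the spectral theorem provides a projection-valued measure $E$ on $[0,+\infty)$ with $|A|=\int_0^\infty \lambda\, dE_\lambda$. For fixed $\psi\in D(|A|^k)$ the scalar measure $d\mu(\lambda)=d\langle E_\lambda\psi,\psi\rangle$ is finite with total mass $\mu([0,\infty))=\|\psi\|^2$ and satisfies $\||A|^s\psi\|^2=\int_0^\infty \lambda^{2s}\,d\mu(\lambda)$ for every $s\in[0,k]$. For $0<l<k$ I would apply Hölder's inequality with conjugate exponents $p=k/l$ and $q=k/(k-l)$ to the factorization $\lambda^{2l}=(\lambda^{2k})^{l/k}\cdot 1^{(k-l)/k}$, which yields $\int \lambda^{2l}\,d\mu\leq\left(\int\lambda^{2k}\,d\mu\right)^{l/k}\left(\int d\mu\right)^{(k-l)/k}$; this is precisely the square of the inequality above. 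The cases $l=0$ and $l=k$ are trivial.

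Next I would run the estimate uniformly over $S$ and $n$. Set $M:=\sup_{n\in\N}\sup_{s\in S}\||A|^k u_n(s)\|$, which is finite by the uniform boundedness of $(u_n)_{n\in\N}$ in $D(|A|^k)$, and $\e_n:=\sup_{s\in S}\|u_n(s)\|$, which tends to $0$ by hypothesis. For any $l<k$ and any $s\in S$, the interpolation inequality gives $\||A|^l u_n(s)\|\leq M^{l/k}\e_n^{1-l/k}$; taking the supremum over $s\in S$ and recalling that $1-l/k>0$, the right-hand side tends to $0$ as $n\to\infty$. Combined with $\sup_{s\in S}\|u_n(s)\|=\e_n\to 0$, this shows that $(u_n)_{n\in\N}$ tends to $0$ in the graph norm of $D(|A|^l)$ uniformly in $S$, as claimed.

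There is no serious obstacle here: the only point requiring a little care is that $A$ is merely skew-adjoint rather than self-adjoint, so the whole argument must be carried out with the non-negative self-adjoint operator $|A|$ (equivalently, with the spectral measure of $-A^2$) rather than with $A$ itself. Once this is noted, both the interpolation inequality and the passage to the supremum over $S$ are routine.
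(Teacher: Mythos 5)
Your proposal is correct and takes essentially the same route as the paper: both reduce the lemma to the interpolation inequality $\||A|^{l}\psi\| \leq \|\psi\|^{1-l/k}\,\||A|^{k}\psi\|^{l/k}$ and then conclude by taking suprema over $S$ and $n$. The only difference is how that inequality is justified --- the paper invokes the logarithmic convexity of $s \mapsto \||A|^{s}u\|$ obtained from Cauchy--Schwarz, whereas you derive it from H\"older's inequality applied to the spectral measure of $|A|$; these are two standard proofs of the same fact, yours being, if anything, slightly more self-contained.
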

\begin{proof}
The proof follows from the logarithmic convexity of $l\in[0,k]\mapsto
\||A|^lu\|$. Indeed 
\[
 \||A|^{\frac{l+j}{2}}u\|=\sqrt{\langle
|A|^lu,|A|^ju\rangle} \leq \|
|A|^lu\|^{1/2}\||A|^ju\|^{1/2}.
\]
If $l<k$ then 
$$
\||A|^l u_n\|\leq
\|u_n\|^{\frac{k-l}{k}}\||A|^{k} u_n \|^\frac{l}{k}.
$$
Let $C=\sup_{n\in\N}\||A|^{k} u_n \|^2$ and $N>0$ such that for any
$n>N$, $\|u_n\|^2\leq \varepsilon$ we obtain 
\[
  n>N\Longrightarrow \||A|^l u_n\|^2 \leq
\varepsilon^{\frac{k-l}{k}}C^\frac{l}{k} , \]
which provides the lemma.
\end{proof}

\subsection{Interpolation of fractional powers of operators}
Let us now state a more sophisticated result. The following result can also be deduced from the content of \cite[Section 2.8]{amrein} and its proof is an extension to the unbounded case of the result by \cite{pedersen}. 
\begin{proposition}\label{Prop:Interpolation}
Let $A$ and $B$ be two self-adjoint positive operators in $\Hc$ such that there
exists $c>0$ with 
\[
 c\leq B \leq A
\]
in the form sense. Then, for any $\alpha\in (0,1)$, 
\[
 c^\alpha\leq B^\alpha \leq A^\alpha.
\]
\end{proposition}

The proof of Proposition~\ref{Prop:Interpolation} follows from the following series of lemmas.

For a selfadjoint operator $A$ and $z\in \C\setminus\R$,
the functional calculus is the extension of the mapping
\[
 \left\{x\in \R \mapsto (x-z)^{-1}\right\}\in B(\R) \to (A-z)^{-1} \in B(\Hc)
\]
as a strong continuous 
$\ast$-algebra homomorphism on the space $B(\R)$ of bounded borelian functions on the real line to $B(\Hc)$.

Let us recall the following functional calculus identity based on the Poisson
formula, see~\cite[Lemma 6.1.1]{amrein}.
\begin{lemma}\label{Lem:PoissonFormula}
Let $A$ be a selfadjoint operator in $\Hc$.
 Let $f$ be a bounded borelian function.
 Then $f(A)$ is the weak-limit as $\varepsilon \to 0^+$ of
  \[
  \frac{1}{2 \rmi \pi}\int_{\R} f(\lambda)
\Im(A-\lambda-i \varepsilon)^{-1}\,d\lambda.
 \]
\end{lemma}
We also recall, for $\alpha\in (0,1)$ and $x>0$, the formula
\[
 x^{-\alpha} = \frac{\pi}{\sin(\pi\alpha)}\int_{0}^{\infty}
\frac{w^{-\alpha}}{x+w}\,dw.
\]
Then from the Fubini theorem and Lemma~\ref{Lem:PoissonFormula} we obtain the following.
\begin{lemma}
Let $A$ be a positive selfadjoint operator in $\Hc$.
Then for $\alpha\in (0,1)$
\[
 A^\alpha = \frac{\pi}{\sin(\pi\alpha)}\int_{0}^{\infty} \frac{w^{-1+\alpha}
A}{A+w}\,dw
\]
on $D(A)$.
\end{lemma}
The domain of validity of the above identity can be extended to any core of $A^\alpha$ that makes the integral strongly convergent.

\begin{lemma}
Let $A$ and $B$ be two self-adjoint positive operators in $\Hc$ such that there
exists $c>0$ with 
\[
 c\leq B \leq A.
\]
Then
\[
 A^{-1} \leq B^{-1}.
\]
\end{lemma}
\begin{proof}
First notice that both $A$ and $B$ are invertible from their domains to $\Hc$
as well as their square roots. Then from
\[
 \sqrt{c}\|u\|\leq \|\sqrt{B}u \| \leq \|\sqrt{A}u\|,
\]
we deduce that $\sqrt{B}\sqrt{A}^{-1}$ is a bounded operator with norm
at most $1$.

In the other hand the operator $\sqrt{A}^{-1}\sqrt{B}$ defined on $D(\sqrt{B})$
extends as the adjoint of $\sqrt{B}\sqrt{A}^{-1}$ to a closed operator on $\Hc$
and hence is bounded with norm at most $1$ and
\[
\|\sqrt{A}^{-1}\sqrt{B}u \| \leq \|u\|, \forall u \in D(\sqrt{B})
\]
and thus
\[
\|\sqrt{A}^{-1}u \| \leq \|\sqrt{B}^{-1}u\|.
\]
and the result follows.
\end{proof}

\begin{proof}[Proof of Proposition~\ref{Prop:Interpolation}]
We have that
\[
 c\leq B \leq A,
\]
which implies, for any $w>0$, that
\[
 1-w(B+w)^{-1} \leq 1-w(A+w)^{-1}.
\]
and thus
\[
\frac{w^{-1+\alpha}
B}{B+w}\leq
 \frac{w^{-1+\alpha}
A}{A+w}.
\]
Integrating on $w>0$ (first restricted to $D(A)\times D(A)$) gives the desired inequality by density.
\end{proof}

Proposition~\ref{Prop:Interpolation} can be extend to the case $c=0$ by replacing $A$ and $B$ by $A+\epsilon$ and $B+\epsilon$ as in~\cite{pedersen},
we then obtain
\[
0\leq B^\alpha\leq  (B+\epsilon)^\alpha\leq  (A+\epsilon)^\alpha.
\]
The second inequality is immediate. We have that
\[
0\leq (A+\epsilon)^{-\alpha/2}B^\alpha(A+\epsilon)^{-\alpha/2}\leq  1,
\]
so that, taking $\epsilon$ to $0$, gives
\begin{equation}\label{eq:Balfa}
 0\leq B^\alpha\leq A^\alpha.
\end{equation}
We hence deduce the following corollary.
\begin{corollary}\label{Lem:DomainInterpolation}
 Let $A$ and $B$ be two positive self-adjoint operators sharing the 
 same domains. For any $\alpha\in (0,1)$, we have :
 \[
  D(A^\alpha)=D(B^\alpha)
 \]
\end{corollary}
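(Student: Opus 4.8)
The plan is to reduce the whole statement to the operator monotonicity of $t\mapsto t^\alpha$ already established in Proposition~\ref{Prop:Interpolation}, the only analytic input needed. First I would convert the equality of domains into a two-sided comparison of the squares. Since $A$ and $B$ are self-adjoint with $D(A)=D(B)$, both graph norms $u\mapsto \|u\|+\|Au\|$ and $u\mapsto \|u\|+\|Bu\|$ are complete on this common domain, so by the closed graph theorem they are equivalent: there is $C\geq 1$ with
\[
 C^{-1}\bigl(\|Au\|^2+\|u\|^2\bigr)\leq \|Bu\|^2+\|u\|^2\leq C\bigl(\|Au\|^2+\|u\|^2\bigr),\qquad u\in D(A)=D(B).
\]
Because $A,B\geq 0$ are self-adjoint, $\|Au\|^2+\|u\|^2=\langle (A^2+1)u,u\rangle$ and similarly for $B$, so that, as quadratic forms on the common domain $D(A)=D(B)$,
\[
 C^{-1}(A^2+1)\leq B^2+1\leq C(A^2+1).
\]

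Next I would apply Proposition~\ref{Prop:Interpolation} (in the version allowing the lower bound $0$, recorded in the remark following it) to these positive self-adjoint operators. As $t\mapsto t^\alpha$ is operator monotone for $\alpha\in(0,1)$, the inequalities are preserved:
\[
 C^{-\alpha}(A^2+1)^\alpha\leq (B^2+1)^\alpha\leq C^\alpha(A^2+1)^\alpha.
\]
The remaining step is scalar plus spectral calculus: for $x\geq 0$ one has $\tfrac12(x^{2\alpha}+1)\leq (x^2+1)^\alpha\leq 2^\alpha(x^{2\alpha}+1)$, since each side is comparable to $\max(x^{2\alpha},1)$. Inserting the functional calculus of $A$ and of $B$ gives the same comparison between $(A^2+1)^\alpha$ and $A^{2\alpha}+1$, and between $(B^2+1)^\alpha$ and $B^{2\alpha}+1$. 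Chaining these with the displayed inequality produces a constant $C'$ with $B^{2\alpha}+1\leq C'(A^{2\alpha}+1)$ and, symmetrically, $A^{2\alpha}+1\leq C'(B^{2\alpha}+1)$ as forms. Since the form domain of $B^{2\alpha}$ is exactly $D(B^\alpha)$ and that of $A^{2\alpha}$ is $D(A^\alpha)$, the first inequality forces $D(A^\alpha)\subset D(B^\alpha)$ together with the bound $\|B^\alpha u\|^2\leq C'(\|A^\alpha u\|^2+\|u\|^2)$, while the second gives the reverse inclusion; hence $D(A^\alpha)=D(B^\alpha)$.

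I expect the only delicate point to be the bookkeeping of the form-sense inequalities: checking at each step that the common form domain is the right one, and recalling that a form bound $S\leq T$ between positive self-adjoint operators encodes the inclusion $D(T^{1/2})\subset D(S^{1/2})$, which is precisely what turns the final comparison into the stated domain equality. The genuine analytic content, the operator monotonicity, is entirely furnished by Proposition~\ref{Prop:Interpolation}, so no additional hard estimate is required.
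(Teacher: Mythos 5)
Your proof is correct and follows essentially the same route as the paper: the closed graph theorem converts the equality of domains into a form comparison of the squares, and Proposition~\ref{Prop:Interpolation} (operator monotonicity of $t\mapsto t^\alpha$) transfers that comparison to the fractional powers, one-sided inequality plus symmetry giving the domain equality. The only divergence is bookkeeping: the paper first assumes spectrum away from zero so as to work with the homogeneous inequality $B^2\leq c^2A^2$ (handling the general case through the $\epsilon$-shift extension stated just before the corollary), whereas you shift by $+1$ inside the squares and pay with the elementary spectral-calculus comparison $(x^2+1)^\alpha \asymp x^{2\alpha}+1$ --- a slightly longer but cleaner treatment of the same issue.
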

\begin{proof}
As $B$ is closed it is a bounded operator from $D(A)$ to $\Hc$. Thus
 \[
\exists c>0,\forall \phi\in D(A),  \|B\phi\|\leq c\|A\phi\|.
 \]
 Hence 
 \[
  B^2\leq c^2 A^2,
 \]
 and, from~\eqref{eq:Balfa},  we have that $B^{2\alpha/2}$ is bounded from $D(A^{2\alpha/2})$ to $\Hc$.
We conclude by noticing that the proof is  symmetric in $A$ and $B$.
\end{proof}

\section{Sufficient conditions for approximate controllability with bounded variation controls}
The aim of this Section is to recall approximate controllability results obtained in other 
contexts and how this results may be adapted to the framework of the present analysis. 

We first recall the following definitions from~\cite{Schrod}.
\begin{definition}
 Let $(A,B,\R)$ satisfy Assumptions~\ref{ass:ass}
 such that $A$ and $B$ are skew-symmetric. Let $\Phi=(\phi_k)_k$ be a
Hilbert basis of $\Hc$ made of eigenvectors of $A$, $A\phi_k=\mathrm{i}\lambda_k
\phi_k$ for every $k$ in $\mathbf{N}$. A pair $(j,k)$ of integers is a  
\emph{nondegenerate transition} of $(A,B,\Phi)$ if $(i)$  $\langle \phi_j,B\phi_k \rangle \neq 
 0$ and $(ii)$ for every $(l,m)$ in $\mathbf{N}^2$, $|\lambda_j-\lambda_k|=|
\lambda_l-\lambda_m|$ implies $(j,k)=(l,m)$ or $\langle \phi_l, B \phi_m\rangle =0$ or 
$\{j,k\}\cap \{l,m\}=\emptyset$.
\end{definition}
 \begin{definition}
 Let $(A,B,\R)$ satisfy Assumptions~\ref{ass:ass}
 such that $A$ and $B$ are skew-symmetric. Let $\Phi=(\phi_k)_k$ be a
Hilbert basis of $\Hc$ made of eigenvectors of $A$, $A\phi_k=\mathrm{i}\lambda_k
\phi_k$ for every $k$ in $\mathbf{N}$. A subset $S$ of $\mathbf{N}^2$ 
is a \emph{nondegenerate chain of connectedness} of $(A,B,\Phi)$ if
$(i)$ for every $(j,k)$ in $S$, $(j,k)$ is a nondegenerate transition of $(A,B)$ and $(ii)$ 
for every $r_a,r_b$ in $\mathbf{N}$, there exists a finite sequence $r_a=r_0,r_1,
\ldots,r_p=r_b$ in $\mathbf{N}$ such that, for every $j\leq p-1$, $(r_j,r_{j+1})$ belongs 
to $S$.   
\end{definition}
\begin{proposition}\label{prop:controlHs}
 Let $(A,B,\R)$ satisfy Assumptions \ref{ass:ass}
 such that $A$ and $B$ are skew-symmetric. Let $\Phi=(\phi_k)_k$ be a
Hilbert basis of $\Hc$ made of eigenvectors of $A$, $A\phi_k=\mathrm{i}\lambda_k
\phi_k$ for every $k$ in $\mathbf{N}$. Let $S$ be a nondegenerate chain of
connectedness of $(A,B)$. 
 Then, for every
$\eta>0$, $(A,B)$ is simultaneously approximately controllable in
$D(|A|^{1-\eta})$. 
\end{proposition}
\begin{proof}
 First of all, it is enough to prove the result for target propagators 
$\hat{\Upsilon}$ leaving invariant the space of co-dimension 2 spanned by
$(\phi_j,\phi_k)$ for $(j,k)$ in $S$
$$
\hat{\Upsilon}=e^{\mathrm{i}\nu_l}  (\cos(\theta) \phi_l^\ast \phi_l +
\sin(\theta) \phi_l^\ast \phi_k) +
e^{\mathrm{i}\nu_k}  (-\sin(\theta) \phi_k^\ast \phi_l + \cos(\theta)
\phi_l^\ast \phi_k).
$$
The result in $\Hc$-norm is a consequence of \cite[Theorem 1]{periodic}: for every piecewise
constant $u^{\ast}:\R\rightarrow \R$,
$2\pi/|\lambda_j-\lambda_k|$-periodic such that 
$$
\int_0^{\frac{2\pi}{|\lambda_j-\lambda_k|}}
u^{\ast}(\tau)e^{\mathrm{i}(\lambda_j-\lambda_k)\tau}\mathrm{d}\tau \neq 0,
$$
and 
$$
\int_0^{\frac{2\pi}{|\lambda_j-\lambda_k|}}
u^{\ast}(\tau)e^{\mathrm{i}(\lambda_l-\lambda_m)\tau}\mathrm{d}\tau =0,
$$
for every $l,m$ such that $(\lambda_l-\lambda_m)\in \mathbf{Z}
(\lambda_j-\lambda_k)$ and $b_{l,m}\neq 0$, there exists $T^\ast$ such that 
$\Upsilon^{u^\ast/n}(nT^\ast,0)$ tends to $\hat{\Upsilon}$ as $n$ tends to
infinity.

The conclusion follows using Lemma~\ref{LEM_interpolation} and the estimate in 
$A$-norm of Theorem~\ref{thm:kato}.
\end{proof}

Let us just mention the following result in the case of higher regularity.
\begin{proposition}\label{prop:controlHsFEPS}
Let $k$ be a positive real.  Let $(A,B,\R)$ satisfy Assumptions \ref{ass:ass}
 such that $(A,B)$ is $k$-mildly coupled. Let $\Phi=(\phi_k)_k$ be a
Hilbert basis of $\Hc$ made of eigenvectors of $A$, $A\phi_k=\mathrm{i}\lambda_k
\phi_k$ for every $k$ in $\mathbf{N}$. Let $S$ be a nondegenerate chain of
connectedness of $(A,B)$ such that, for every $(j,k)$ in $S$, the set
$\{(l,m)\in \mathbf{N}^2| (\lambda_l-\lambda_m)\in \mathbf{Z}
(\lambda_j-\lambda_k) \mbox{ and } \langle \phi_l,B\phi_m\rangle\neq 0\}$ is finite. Then, for every
$\eta>0$, $(A,B)$ is simultaneously approximately controllable in
$D(|A|^{k/2+1-\eta})$. 
\end{proposition}
\begin{proof}
The proof differs from the previous one for the interpolation step and for the use of 
Proposition~\ref{PRO_cont_entree_sortie_Hk-BV}.
\end{proof}

\section{Analytical perturbations}\label{SEC_analytic_perturbations}
To apply sufficient condition for approximate controllability (Proposition
\ref{prop:controlHsFEPS}), we need to find a nonresonant chain of connectedness,
which may require some work on practical examples. A classical idea already
used in this study is to introduce a new control $\tilde{u}=u-\bar{u}$ and to
consider the system $x'=(A+\bar{u} B)+ (u-\bar{u})B$ for a suitably chosen
constant $\bar{u}$.

We have the following results by Kato~\cite[Section VII.2]{Kato}.
\begin{definition}
Let $D_0$ be a domain of the complex plane, a family $(T(z))_{z\in D_0}$ of
closed operators from a Banach space $X$ to a Banach space $Y$ is said to
be \emph{a holomorphic family of type (A)} if
\begin{enumerate}
 \item $D(T(z))=D$ is independent of $z$,
 \item $T(z)u$ is holomorphic for $z$ in $D_0$ for every $u$ in $D$.
\end{enumerate}
\end{definition}

\begin{theorem}[\protect{\cite[Theorem VII.3.9]{Kato}}]
 Let $T(z)$ be a \emph{selfadjoint} holomorphic family of type (A) defined for
$z$ in a neighborhood of an interval $I_0$ of the real axis such that
$T(z)^\ast=T(\bar z)$. Furthermore,
let $T(z)$ have a compact resolvent. Then all eigenvalues of $T(z)$ can be
represented by functions which are holomorphic in $I_0$\footnote{Each of them
is
holomophic in some neighborhood of $I_0$ but possibly different for each in
such a way that their intersection is just $I_0$.}.

More precisely, there is a sequence of scalar-valued functions $(z\mapsto
\lambda_n(z))_{n \in \N}$ and operator-valued functions $(z \mapsto \phi_n(z))_{n
\in \N}$, all holomorphic on $I_0$, such that for
$z$ in $I_0$, the sequence $(\lambda_n(z))_{n \in \N}$ represents all the repeated
eigenvalues of $T(z)$
and $(\phi_n(z))_{n \in \N}$ forms a complete orthonormal family of the
associated
eigenvectors of $T(z)$.
\end{theorem}

\begin{proposition}
If $(A,B,K)$ satisfies Assumptions~\ref{ass:ass} then the family $\rmi(A+ z B)_{z \in \C,|z|<1/\|B\|_A}$ is
holomorphic of type (A).
\end{proposition}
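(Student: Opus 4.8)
The plan is to verify the two defining conditions of a holomorphic family of type (A) directly, exploiting that the dependence on $z$ is affine so that the holomorphy part is essentially automatic and the only real work is a closedness statement.

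First I would establish that $T(z):=\rmi(A+zB)$ is a closed operator with domain exactly $D(A)$, independent of $z$, for every $z$ in the disk $|z|<1/\|B\|_A$. Recall from \eqref{def:Abound} that Assumption~\ref{ass:ass} makes $B$ relatively bounded with respect to $A$ with relative bound $\|B\|_A$: for every $\epsilon>0$ there is $b_\epsilon>0$ with $\|B\psi\|\leq(\|B\|_A+\epsilon)\|A\psi\|+b_\epsilon\|\psi\|$ for all $\psi$ in $D(A)$. Given $z$ with $|z|<1/\|B\|_A$, I would choose $\epsilon>0$ small enough that $|z|(\|B\|_A+\epsilon)<1$; then $zB$ is $A$-bounded with relative bound strictly less than $1$ (here one uses that the estimate above, valid for real multiples, persists for complex $z$ since $\|zB\psi\|=|z|\,\|B\psi\|$). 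The standard theorem on relatively bounded perturbations of a closed operator (see~\cite[Chapter IV]{Kato}) then shows that $A+zB$, and hence $T(z)=\rmi(A+zB)$, is closed on the common domain $D(A)$. This yields the first condition in the definition, domain independence together with closedness.

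Next I would check holomorphy. Fixing $u$ in $D(A)$, the inclusion $D(A)\subset D(B)$ from Assumption~\ref{ass:ass} guarantees that both $Au$ and $Bu$ lie in $\Hc$, so that
\[
T(z)u=\rmi Au+z\,\rmi Bu
\]
is an affine (degree-one polynomial) $\Hc$-valued function of $z$, hence entire and in particular holomorphic on $\{|z|<1/\|B\|_A\}$. This is the second condition, and it completes the verification.

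The only nonroutine point is the closedness and domain-exactness in the first step, and even that reduces immediately to the relatively bounded perturbation theorem once the inequality $|z|(\|B\|_A+\epsilon)<1$ has been arranged; the holomorphy is forced by the affine dependence on $z$. I therefore do not expect any genuine obstacle, the statement being a direct translation of the $A$-boundedness of $B$ into the language of Kato's analytic perturbation theory.
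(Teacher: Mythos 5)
Your proof is correct and follows essentially the same route as the paper: the domain independence and closedness are settled by the relatively bounded perturbation theorem (the paper cites this as the Kato--Rellich theorem), and holomorphy is immediate from the affine dependence on $z$. If anything, you are slightly more careful than the paper in noting that for complex $z$ the relevant statement is the stability of closedness under perturbations with relative bound less than one, rather than the self-adjointness version of Kato--Rellich, which does not literally apply to $zB$ with $z \in \C \setminus \R$.
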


\begin{proof}
The question of domain is solved by the Kato--Rellich Theorem. The holomorphy is 
immediate as the family $\rmi(A+ z B)$ is affine in $z$.
\end{proof}

\bibliographystyle{alpha}
\bibliography{biblioteca}

\end{document}